\newcommand{\N}{{\mathbb N}}
\newcommand{\Z}{{\mathbb Z}}
\newcommand{\R}{{\mathbb R}}
\newcommand{\C}{{\mathbb C}}
\newcommand{\F}{{\mathbb F}}
\newcommand{\semi}{{\mathbb o}}
\DeclareMathOperator{\lie}{{\mathfrak g}}
\DeclareMathOperator{\Fix}{{\rm Fix}}
\DeclareMathOperator{\Aut}{{\rm Aut}}
\DeclareMathOperator{\Aff}{{\rm Aff}}
\DeclareMathOperator{\aff}{{\rm aff}}
\DeclareMathOperator{\grp}{{\rm grp}}
\DeclareMathOperator{\Endo}{Endo}
\DeclareMathOperator{\lcm}{{\rm lcm}}
\DeclareMathOperator{\Id}{Id}
\DeclareMathOperator{\GL}{{\rm GL}}
\newtheorem{theorem}{Theorem}[section]
\newtheorem{proposition}[theorem]{Proposition}
\newtheorem{lemma}[theorem]{Lemma}
\newtheorem{corollary}[theorem]{Corollary}
\newtheorem{definition}[theorem]{Definition}
\newtheorem{remark}{Remark}
\newtheorem{example}{Example}
\newcommand{\ra}{\rightarrow}
\def\blfootnote{\xdef\@thefnmark{}\@footnotetext}
\begin{document}
\title{\bf Nielsen periodic point theory on infra-nilmanifolds}
\author{Gert-Jan Dugardein\\
KULeuven Kulak, E.\ Sabbelaan 53, B-8500 Kortrijk}
\date{\today}
\maketitle

\begin{abstract}
In this paper, we expand certain aspects of Nielsen periodic point theory from tori and nilmanifolds to infra-nilmanifolds. We show that infra-nilmanifolds are essentially reducible to the GCD and essentially toral. With these structural properties in mind, we develop a method to compute the full Nielsen-Jiang number $NF_n(f)$. We also determine for which maps it holds that $NF_n(f)=N(f^n)$, for all $n$.
\end{abstract}
%\blfootnote{\textit{Author's e-mail address:} gertjan.dugardein@kuleuven-kulak.be}
%\blfootnote{\textit{2010 Mathematics Subject Classification:} 37C25, 37C30, 55M20.}
%\blfootnote{\textit{Key Words:} Nielsen zeta function, Nielsen number, Lefschetz number, infra-nilmanifold.}
\section{Introduction}
In Nielsen theory, the purpose is to find a homotopy-invariant lower bound for the number of fixed points of a map. Similarly, Nielsen periodic point theory will try to find a homotopy-invariant lower bound for the number of periodic points. Both of these theories work for all continuous maps on connected, compact absolute neighborhood retracts, but in this paper we will mainly focus on manifolds.

\medskip

So, let $f:X\to X$ be a continuous map on a manifold. We call $x\in X$ a periodic point if $f^n(x)=x$ for a certain integer $n>0$. The smallest $n$ for which this holds, will be called the period of $n$. In Nielsen theory, we partition the fixed point set into fixed point classes and subsequently count the number of fixed point classes that can not disappear by using a homotopy. The resulting number is the Nielsen number $N(f)$. We can do a similar thing for $f^n$ and try to approximate the number of $n$-periodic points of $f$ by the number $N(f^n)$. However, this lower bound is not sharp in general. By studying the relations between fixed point classes for different iterates of $f$ more closely, it is possible to define a better lower bound, namely the full Nielsen-Jiang periodic number $NF_n(f)$. This number has already been studied extensively for maps on tori and nilmanifolds and in this paper we will extend some of these results to infra-nilmanifolds.

\medskip

In the first two sections, we will give an introduction to the theory of infra-nilmanifolds and to Nielsen periodic point theory. Subsequently, we will prove that infra-nilmanifolds are essentially reducible to the GCD (Theorem \ref{thmessredgcd}) and essentially toral (Theorem \ref{thmesstor}). These structural properties also hold for maps on nilmanifolds and tori and are a necessary first step in order to be able to compute $NF_n(f)$ on infra-nilmanifolds. We will also prove that for all $n$, $NF_n(f)=N(f^n)$ for a large class of maps on infra-nilmanifolds, namely the class of semi-hyperbolic maps (Theorem \ref{thmNF=Nf)}).

\medskip

Because maps on tori and nilmanifolds are weakly Jiang, the computation of $NF_n(f)$ turns out to be very doable on these manifolds. On infra-nilmanifolds, however, this is not the case. In the penultimate section, we will therefore develop a method that makes the computation of $NF_n(f)$ easier. Sometimes, though, the computation still can be quite hard, but this might be inherent to the problem, because, by applying our method to several examples, it becomes apparent that the expression for $NF_n(f)$ can be very complex.

\medskip

In the last section, we will look specifically at affine maps on infra-nilmanifolds. In general, these maps behave better than arbitrary continuous maps. In Theorem \ref{theorem uiteindelijk boost inessentieel}, we prove that, under very mild conditions, these affine maps can only be Wecken (which means that $\#\Fix(f)=N(f)$) at every level if and only if they are semi-hyperbolic. This allows us to determine exactly for which maps $NF_n(f)=N(f^n)$, for all $n$ (Corollary \ref{cor NF=Nf}). 
\section{Infra-nilmanifolds}
Let $G$ be a connected, simply connected, nilpotent Lie group. The group of affine transformations on $G$, $\Aff(G)= G\semi \Aut(G)$, admits a natural left action on $G$:
\[ \forall (g,\alpha)\in \Aff(G),\, \forall h \in G: \;\;^{(g,\alpha)}h= g \alpha(h).\]Define $p:\Aff(G)=G\semi \Aut(G) \to \Aut(G)$ as the natural projection onto the second factor of the semi-direct product. 

\begin{definition} A subgroup $\Gamma \subseteq \Aff(G)$ is called \textbf{almost-crystallographic} if and only if $p(\Gamma)$ is finite and $\Gamma\cap G$ is a uniform and discrete subgroup of $G$. The finite group $F=p(\Gamma)$ is called the holonomy group of $\Gamma$.
\end{definition}

With these properties, the natural action of such a group $\Gamma$ on $G$ becomes properly discontinuous and cocompact. Moreover, when $\Gamma$ is torsion-free, this action is free, which makes the resulting quotient space $\Gamma\backslash G$ a compact manifold. This idea leads to the following definition. 

\begin{definition}
A torsion-free almost-crystallographic group $\Gamma\subseteq \Aff(G) $ is called an \textbf{almost-Bieberbach group}, and the corresponding manifold $\Gamma\backslash G$ is called an \textbf{infra-nilmanifold} (modeled on $G$).  
\end{definition} 

When the holonomy group is trivial, $\Gamma$ can be considered to be a lattice in $G$ and the corresponding manifold $\Gamma\backslash G$ is a nilmanifold. When $G$ is abelian, i.e. $G$ is isomorphic to $\R^n$, $\Gamma$ will be called a Bieberbach group and $\Gamma\backslash G$ a compact flat manifold. When $G$ is abelian and the holonomy group of $\Gamma$ is trivial, then $\Gamma\backslash G$ is a torus. Hence, infra-nilmanifolds are a natural generalization of nilmanifolds and tori.

\medskip

Now, define the semigroup $\aff(G)=G\semi \Endo(G)$. Note that $\aff(G)$ acts on $G$ in a similar way as $\Aff(G)$:\[ (d,D): \; G \rightarrow G:\; h \mapsto d D(h).\]The elements of this semigroup will be called affine maps, since $\aff(G)$ is merely a generalization of the semigroup of affine maps $\aff(\R^n)$ to the nilpotent case. One of the main advantages of working with infra-nilmanifolds, is the fact that every continuous map lies in the same homotopy class as a map induced by such an affine map with similar properties. These maps are often easier to handle and are therefore ideal to use in proving several theorems. This strategy will be often used throughout this paper, for example in the last section of this paper.

\begin{theorem}[K.B.\ Lee \cite{lee95-2}]
\label{leemaps} Let $G$ be a connected and simply connected nilpotent Lie group and suppose that $\Gamma, \Gamma'\subseteq \Aff(G)$ are two almost-crystallographic groups modeled on $G$. 
Then for any homomorphism $\varphi: \Gamma\rightarrow \Gamma'$ there 
exists an element $  (d, D)\in \aff(G)$ such that 
\[ \forall \gamma \in \Gamma: \; \varphi(\gamma) (d,D) =  (d,D) \gamma.\] 
\end{theorem}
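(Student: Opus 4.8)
The plan is to reduce the statement to Malcev's rigidity theorem for lattices in the simply connected nilpotent Lie group $G$ and then absorb the finite holonomy by an averaging argument. First I would pass to a lattice: write $\Lambda=\Gamma\cap G$ and $\Lambda'=\Gamma'\cap G$ for the translation lattices and set $\Lambda_1=\Lambda\cap\varphi^{-1}(\Lambda')$. Both $\Lambda$ and $\varphi^{-1}(\Lambda')$ have finite index in $\Gamma$ (the former of index $|p(\Gamma)|$, the latter of index at most $|p(\Gamma')|$), so $\Lambda_1$ is a normal subgroup of finite index in $\Gamma$; being a finite-index subgroup of the uniform discrete group $\Lambda$, it is again uniform and discrete in $G$, and $\varphi(\Lambda_1)\subseteq\Lambda'\subseteq G$ by construction. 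By Malcev rigidity, $\varphi|_{\Lambda_1}\colon\Lambda_1\to G$ extends uniquely to a continuous endomorphism $\bar D\colon G\to G$.

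Next I would record how $\bar D$ interacts with the holonomy. For $\gamma\in\Gamma$ write $\gamma=(\tau(\gamma),p(\gamma))$ for its decomposition in $G\semi\Aut(G)$, and likewise for $\varphi(\gamma)$. Since $\Lambda_1$ is normal, conjugation by $\gamma$ maps $\Lambda_1$ into itself; applying $\varphi$, comparing translation parts on the lattice $\Lambda_1$, and invoking uniqueness of the Malcev extension yields
\[ \bar D\circ p(\gamma)=\mu_{f(\gamma)}\circ p(\varphi(\gamma))\circ\bar D, \qquad f(\gamma):=\bar D(\tau(\gamma))^{-1}\,\tau(\varphi(\gamma))\in G, \]
where $\mu_x$ is conjugation by $x$ in $G$. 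A short computation with this identity shows that $f\colon\Gamma\to G$ is a $1$-cocycle for the action $\gamma\cdot x=p(\varphi(\gamma))(x)$ of $\Gamma$ on $G$, an action that factors through the finite group $\bar F:=p(\varphi(\Gamma))$.

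Then I would kill this cocycle and assemble the affine map. One first checks that $f$ vanishes on $\ker(\Gamma\to\bar F)=\varphi^{-1}(\Lambda')$: if $p(\gamma_0)$ has order $k$ then $\gamma_0^{k}\in\Lambda_1$, and evaluating $\tau(\varphi(\gamma_0^{k}))$ in two ways --- as $\tau(\varphi(\gamma_0))^{k}$ and as $\bar D(\tau(\gamma_0^{k}))$, using the identity above --- forces $f(\gamma_0)^{k}=e$, hence $f(\gamma_0)=e$ since $G$ is torsion-free. Therefore $f$ descends to a $1$-cocycle of the finite group $\bar F$ with values in $G$. As $\bar F$ is finite and $G$ is simply connected nilpotent, the nonabelian cohomology $H^1(\bar F,G)$ vanishes --- by the usual averaging, proved by induction on the nilpotency class via $1\to Z(G)\to G\to G/Z(G)\to 1$ and the vanishing of $H^1$ and $H^2$ of a finite group with coefficients in the $\R$-vector space $Z(G)$. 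Hence there is $d\in G$ with $f(\gamma)=d\,\bigl(p(\varphi(\gamma))(d)\bigr)^{-1}$ for all $\gamma$. Setting $D:=\mu_{d^{-1}}\circ\bar D\in\Endo(G)$, a final check --- translation parts matching by the coboundary relation, $\Endo(G)$-parts matching by the displayed identity --- gives $\varphi(\gamma)(d,D)=(d,D)\gamma$ for every $\gamma\in\Gamma$.

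I expect the real obstacle to be this last stage, the holonomy correction producing $d$: it hinges on the vanishing of $H^1(\bar F,G)$, which only becomes usable after the non-obvious observation that $f$ already dies on $\varphi^{-1}(\Lambda')$ and hence descends to the finite quotient $\bar F$. The finite-index reduction and the Malcev extension are routine; the averaging over $\bar F$, carried out in the non-abelian nilpotent setting via the exponential map and the central series, is where care is needed.
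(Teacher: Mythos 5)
The paper does not prove this theorem; it is stated and used as a black box, cited from K.B.\ Lee \cite{lee95-2}, so there is no in-paper argument to compare against. Judged on its own terms, your outline is correct and almost certainly runs along the same lines as Lee's original proof: pass to the finite-index lattice $\Lambda_1=\Lambda\cap\varphi^{-1}(\Lambda')$ (normal in $\Gamma$, since $\Lambda$ and $\varphi^{-1}(\Lambda')$ both are) where $\varphi$ is a genuine lattice homomorphism; extend to $\bar D\in\Endo(G)$ by Mal'cev rigidity; record the conjugation identity $\bar D\circ p(\gamma)=\mu_{f(\gamma)}\circ p(\varphi(\gamma))\circ\bar D$ with $f(\gamma)=\bar D(\tau(\gamma))^{-1}\tau(\varphi(\gamma))$; check $f$ is a cocycle for the action of $\Gamma$ through $p\circ\varphi$, descends to the finite quotient $\bar F$, and is trivialized by $H^1(\bar F,G)=1$; finally set $D=\mu_{d^{-1}}\circ\bar D$. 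I verified the cocycle identity $f(\gamma_1\gamma_2)=f(\gamma_1)\,p(\varphi(\gamma_1))(f(\gamma_2))$, the descent of $f$ to $\bar F$, and that the coboundary relation $f(\gamma)=d\cdot p(\varphi(\gamma))(d)^{-1}$ gives both $Dp(\gamma)=p(\varphi(\gamma))D$ and $dD(\tau(\gamma))=\tau(\varphi(\gamma))\,p(\varphi(\gamma))(d)$, i.e.\ $\varphi(\gamma)(d,D)=(d,D)\gamma$. The key non-routine step you flagged, that $f$ already vanishes on $\varphi^{-1}(\Lambda')$ so that one can reduce to the finite group $\bar F$, is indeed essential and correct.

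Two small remarks. First, the inductive proof of $H^1(\bar F,G)=1$ via $1\to Z(G)\to G\to G/Z(G)\to 1$ only needs $H^1(\bar F,Z(G))=0$ at each stage, not $H^2$: once $\bar f\in Z^1(\bar F,G/Z(G))$ is split by some $\bar d$, any lift $d'\in G$ of $\bar d$ turns $\gamma\mapsto d'^{-1}f(\gamma)\gamma(d')$ into a $Z(G)$-valued cocycle, which averaging kills; no obstruction in $H^2$ arises. Second, your ``evaluate $\tau(\varphi(\gamma_0^k))$ two ways'' step hides a twist: $\tau(\gamma_0^k)$ is the twisted product $\prod_{j=0}^{k-1}p(\gamma_0)^j(\tau(\gamma_0))$, not $\tau(\gamma_0)^k$. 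One can push the computation through using $\bar D\circ p(\gamma_0)=\mu_{f(\gamma_0)}\circ\bar D$ (valid since $p(\varphi(\gamma_0))=\Id$ on the kernel), obtaining $\tau(\varphi(\gamma_0))^k=\bar D(\tau(\gamma_0^k))\,f(\gamma_0)^k$ and hence $f(\gamma_0)^k=e$; but it is cleaner to observe that on $\ker(\Gamma\to\bar F)$ the action is trivial, so the cocycle relation directly yields $f(\gamma_0^k)=f(\gamma_0)^k$, and $f$ vanishes on $\Lambda_1$ by construction of $\bar D$. Either way $f(\gamma_0)=e$ follows from torsion-freeness of $G$, and your argument stands.
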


We can consider the equality $ \varphi(\gamma) (d,D) =  (d,D) \gamma$ in $\aff(G)$, since $\Aff(G)$ is a subgroup of $\aff(G)$. With this equality in mind, when $\Gamma$ and $\Gamma'$ are torsion-free, it is easy to see that the affine map $(d,D)$ induces a well-defined map between infra-nilmanifolds:\[\overline{(d,D)}: \Gamma \backslash G \rightarrow \Gamma' \backslash G: \; \Gamma h \rightarrow \Gamma' d D(h),\]
which exactly induces the morphism $\varphi$ on the level of the fundamental groups.

\medskip

On the other hand, if we choose an arbitrary map $f:\Gamma\backslash G\ra \Gamma'\backslash G$ between two infra-nilmanifolds and choose a lifting $\tilde{f}:G \to G$ of $f$, then there exists a morphism $\tilde{f}_\ast:\Gamma\to \Gamma'$ such that $\tilde{f}_\ast(\gamma) \circ \tilde{f} = \tilde{f}\circ \gamma$, for all $\gamma\in \Gamma$. By Theorem~\ref{leemaps}, an affine map $(d,D)\in \aff(G)$ exists which also satisfies $\tilde{f}_\ast(\gamma) \circ (d,D)= (d,D)\circ \gamma$ for all $\gamma\in \Gamma$. Therefore, the induced map $\overline{(d,D)} $ and $f$ are homotopic. Hence, whenever we are studying homotopy-invariant properties for maps on infra-nilmanifolds, we are free to replace an arbitrary map $f$ by its affine counterpart.

\medskip

The map $(d,D)$ will be called an \textbf{affine homotopy lift} of $f$, while we will denote the map $\overline{(d,D)}$ as an \textbf{affine map on an infra-nilmanifold}.

\medskip

It might be noteworthy to mention that $(d,D)$ is not unique in the sense that it depends on the choice of lifting $\tilde{f}:G\to G$. For example, from \cite{lee95-2} we know that $D$ is only determined up to an inner automorphism of $G$.  

\medskip 
In \cite{ll09-1}, J.B. Lee and K.B. Lee gave a formula to compute Lefschetz and Nielsen numbers on infra-nilmanifolds. Pick an infra-nilmanifold $\Gamma\backslash G$, determined by the almost-Bieberbach group $\Gamma\subseteq \Aff(G)$ and let $F\subseteq \Aut(G)$ denote the holonomy group of $\Gamma$. We will write $\lie$ for the Lie algebra of $G$. Because $G$ is a nilpotent, connected and simply connected Lie group, the map $\exp:\lie\to G$ will be a diffeomorphism. Therefore, $\Endo(G)$ and $\Endo(\lie)$ are isomorphic and for every endomorphism $A\in \Endo(G)$, we have a unique $A_\ast\in \Endo(\lie)$, which is determined by the relation $A \circ \exp= \exp \circ A_\ast$. This $A_\ast$ will be called the differential of $A$. Of course, $A$ is invertible if and only if $A_\ast$ is invertible.
\begin{theorem}[J.B.\ Lee and K.B.\ Lee \cite{ll09-1}] \label{LeeForm}Let $\Gamma\subseteq \Aff(G)$ be an almost-Bieberbach group with holonomy group $F\subseteq \Aut(G)$. Let $M=\Gamma\backslash G$ be the associated infra-nilmanifold.  If 
 $f:M\ra M$ is a map with affine homotopy lift $(d,D)$, then  
\[L(f)=\frac{1}{\# F}\sum_{A \in F}\det(I-A_\ast D_\ast)\]
and
\[N(f)=\frac{1}{\# F}\sum_{A \in F}|\det(I-A_\ast D_\ast)|.\]
\end{theorem}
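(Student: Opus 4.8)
Since $L(f)$ and $N(f)$ are homotopy invariants, the discussion preceding the statement lets us assume $f=\overline{(d,D)}$ for an affine homotopy lift $(d,D)\in\aff(G)$. Write $\Lambda=\Gamma\cap G$ for the maximal lattice, so that $N=\Lambda\backslash G$ is a nilmanifold finitely covering $M$ with deck group $F\cong\Gamma/\Lambda$; keep the notation $A_\ast,D_\ast\in\Endo(\lie)$ for the differentials, and let $\bigwedge^{\bullet}\lie^{\ast}$ with differential $d_{\mathrm{CE}}$ be the Chevalley--Eilenberg complex of $\lie$. For the Lefschetz number I would argue via de Rham theory. By Nomizu's theorem the inclusion $\bigl(\bigwedge^{\bullet}\lie^{\ast},d_{\mathrm{CE}}\bigr)\hookrightarrow\Omega^{\bullet}(N)$ of left-invariant forms is a quasi-isomorphism, and it is equivariant for the natural action of $F\subseteq\Aut(G)$ (an element $A$ pulls left-invariant forms back by $A_\ast^{\ast}$); passing to $F$-invariants, an exact operation in characteristic zero, gives $H^{\bullet}(M;\R)\cong H^{\bullet}\bigl((\bigwedge^{\bullet}\lie^{\ast})^{F},d_{\mathrm{CE}}\bigr)$. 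Writing $(d,D)=t_{d}\circ D$, left translation acts trivially on left-invariant forms and $D$ acts by $\bigwedge^{\bullet}D_\ast^{\ast}$, so $f^{\ast}$ on this model is induced by $\bigwedge^{\bullet}D_\ast^{\ast}$; the crucial point is that this operator preserves the subspace of $F$-invariants, because for any $A\in F$, choosing $\gamma\in\Gamma$ with $p(\gamma)=A$ and setting $B=p(\tilde{f}_{\ast}(\gamma))\in F$, the relation $\tilde{f}_{\ast}(\gamma)(d,D)=(d,D)\gamma$ forces $B\circ D=D\circ A$, hence $B_\ast D_\ast=D_\ast A_\ast$ and $A_\ast^{\ast}D_\ast^{\ast}=D_\ast^{\ast}B_\ast^{\ast}$, so $D_\ast^{\ast}$ sends $F$-fixed vectors to $F$-fixed vectors.

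Now the Hopf trace formula gives
\[ L(f)=\sum_{q}(-1)^{q}\Tr\bigl(\textstyle\bigwedge^{q}D_\ast^{\ast}\mid(\bigwedge^{q}\lie^{\ast})^{F}\bigr). \]
Since $(\bigwedge^{q}\lie^{\ast})^{F}$ is the image of the averaging projector $P_{q}=\frac{1}{\#F}\sum_{A\in F}\bigwedge^{q}A_\ast^{\ast}$ and is preserved by $\bigwedge^{q}D_\ast^{\ast}$, a block-triangular computation identifies the trace above with $\Tr\bigl(\bigwedge^{q}D_\ast^{\ast}\circ P_{q}\bigr)=\frac{1}{\#F}\sum_{A}\Tr\bigl(\bigwedge^{q}(A_\ast D_\ast)\bigr)$. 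Summing over $q$ with signs and using $\sum_{q}(-1)^{q}\Tr(\bigwedge^{q}\varphi)=\det(I-\varphi)$ yields $L(f)=\frac{1}{\#F}\sum_{A\in F}\det(I-A_\ast D_\ast)$; note that no invertibility of $D$ was needed.

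For the Nielsen number I would descend to fixed point classes. When $D$ is invertible, $\tilde{f}_{\ast}$ preserves $\Lambda$, so $f$ lifts to the self-maps $\{\alpha\hat{f}:\alpha\in F\}$ of $N$; the lift indexed by $A\in F$ is affine with linearization $A_\ast D_\ast$, so by the classical formula for self-maps of nilmanifolds $N(\alpha\hat{f})=|L(\alpha\hat{f})|=|\det(I-A_\ast D_\ast)|$, its fixed point classes being essential exactly when this is nonzero and then all of index sign $\operatorname{sgn}\det(I-A_\ast D_\ast)$. One then verifies the averaging identity $N(f)=\frac{1}{\#F}\sum_{\alpha\in F}N(\alpha\hat{f})$: each essential fixed point class of $f$ pulls back to a free $F$-orbit of essential fixed point classes among the lifts (freeness forced by torsion-freeness of $\Gamma$), so the right-hand sum counts every essential class of $f$ exactly $\#F$ times with coherent signs, hence without cancellation, giving $N(f)=\frac{1}{\#F}\sum_{A\in F}|\det(I-A_\ast D_\ast)|$. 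When $D$ is singular the same bookkeeping must be carried out directly over $G$: fixed point classes of $f$ correspond to $\tilde{f}_{\ast}$-twisted conjugacy classes in $\Gamma$, to each one one attaches the holonomy $p(\gamma)$ of a representative $\gamma$ (well defined up to the relevant twisted conjugacy in $F$), computes the index of the class from the affine map $\gamma^{-1}(d,D)$ on $G$ — it is $\pm1$ with a sign depending only on $D$ and this holonomy element — and counts the classes lying over each holonomy element, recovering the same $\frac{1}{\#F}$ overcount.

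The main obstacle is exactly this Nielsen bookkeeping: making the correspondence between fixed point classes and twisted conjugacy data precise, establishing that over a given holonomy element there are $|\det(I-A_\ast D_\ast)|$ classes with constant index sign (where the nilpotent structure of $G$ is essential), and pinning the overcount factor to $\#F$ — and, in the singular case, doing all of this without the convenience of a lift to the nilmanifold cover $N$. By contrast, the Lefschetz formula is a fairly mechanical consequence of Nomizu's theorem, the intertwining $B_\ast D_\ast=D_\ast A_\ast$, and the exterior-power trace identity.
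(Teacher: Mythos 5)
The paper itself does not prove this theorem; it is cited verbatim from \cite{ll09-1}, so there is no internal proof to compare against. Evaluating your plan on its own merits: the Lefschetz half is essentially a proof. Nomizu's quasi-isomorphism is indeed $F$-equivariant (a representative $(g,A)\in\Gamma$ pulls left-invariant forms back by $A_\ast^\ast$, since left translation acts trivially), passing to $F$-invariants is exact over $\R$, the intertwining $B_\ast D_\ast = D_\ast A_\ast$ extracted from $\tilde f_\ast(\gamma)(d,D)=(d,D)\gamma$ does force $D_\ast^\ast$ to preserve the $F$-fixed subcomplex, and the trace of a map restricted to the image of an idempotent $P$ that it preserves is $\Tr(TP)$; together with $\sum_q(-1)^q\Tr(\bigwedge^q\varphi)=\det(I-\varphi)$ this closes the computation cleanly, with no invertibility assumption on $D$.

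The Nielsen half is a to-do list rather than a proof, and you say so yourself, but it is worth being precise about where the plan would fail if pushed naively. The identity $N(f)=\frac{1}{\#F}\sum_{\alpha\in F}N(\alpha\hat f)$ is exactly the hard content of Lee--Lee's theorem: unlike the Lefschetz averaging (which holds for any finite regular cover by transfer), Nielsen averaging can fail in general, because the natural map $\Psi:\bigsqcup_\alpha\mathcal{R}(\alpha\hat f)\to\mathcal{R}(f)$ can have fibers of size a proper divisor of $\#F$. Showing the fibers have size exactly $\#F$ over every essential class is precisely what this paper re-establishes in the proof of Theorem~\ref{thmNAf}, and it uses two nontrivial inputs you only gesture at: Anosov's theorem (to know each lift's classes are all essential or all inessential with constant index sign), and Proposition~\ref{propaff} (essential fixed point classes of affine maps are singletons), combined with freeness of the $\Gamma$-action on $G$ to pin the stabilizer and hence the fiber cardinality. ``Freeness forced by torsion-freeness'' only gives freeness of the action on points, not directly on labeled Reidemeister classes; the jump requires the singleton property. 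Finally, for singular $D$ your proposed fix of bookkeeping ``directly over $G$'' is not what works: $\tilde f_\ast$ then need not preserve $\Gamma\cap G$, so one replaces it by a strictly smaller \emph{fully characteristic} lattice $\Lambda\subset\Gamma\cap G$ (as in \cite{ll06-1}, \cite{ll09-1}, and the proof of Theorem~\ref{thmNAf} here), which \emph{is} preserved by every endomorphism of $\Gamma$, and then lifts to $\Lambda\backslash G$. Your notation $\Lambda=\Gamma\cap G$ conflates the two and the argument needs the smaller one.
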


We will now list a couple of properties that we will need in this paper.

\medskip

The following lemma can be found in \cite{ddm05-1}. We have adapted the formulation very slightly, but in essence it is the same lemma and it can be proved in a similar way.

\begin{lemma}\label{Lemma Bram}
Suppose that $F\subset \GL_n(\C)$ is a finite group, $D\in \C^{n\times n}$ and for all $A\in F$, there exists a $B\in F$, such that $DA=BD$. Take an arbitrary element $A_1\in F$ and build the sequence $(A_j)_{j\in \N_0}$, such that $DA_i=A_{i+1}D$, for all $i$. Then,
\begin{enumerate}
\item $\forall j\in \N_0: \det(I-A_1D)=\det(I-A_jD).$
\item $\exists l, j\in \N_0: (A_jD)^l=D^l.$
\end{enumerate}
\end{lemma}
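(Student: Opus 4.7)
For part (1), my plan is to invoke the classical identity $\det(I - XY) = \det(I - YX)$, valid for any two square matrices of the same size. Setting $X = A_i$, $Y = D$ and combining this with the hypothesis $D A_i = A_{i+1} D$ yields
\[\det(I - A_i D) = \det(I - D A_i) = \det(I - A_{i+1} D),\]
so the determinant is constant along the sequence, and a trivial induction on $j$ gives $\det(I - A_1 D) = \det(I - A_j D)$ for every $j$.

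For part (2), I would first establish by induction on $k$ the product formula
\[(A_j D)^k = A_j A_{j+1} \cdots A_{j+k-1}\, D^k.\]
The induction step rests on pushing powers of $D$ past an $A$-factor: iterating $D A_i = A_{i+1} D$ gives $D^k A_j = A_{j+k} D^k$, and therefore $(A_j D)^{k+1} = (A_j A_{j+1} \cdots A_{j+k-1}) D^k (A_j D) = (A_j A_{j+1} \cdots A_{j+k})\, D^{k+1}$. Writing $P_{j,k} := A_j A_{j+1} \cdots A_{j+k-1}$, which lies in $F$ since $F$ is a group, the formula encodes the concatenation rule $P_{j,\, k_1 + k_2} = P_{j, k_1}\, P_{j + k_1,\, k_2}$.

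The argument then concludes by pigeonhole. Because $F$ is finite, the sequence $(P_{1,k})_{k \geq 1}$ in $F$ cannot be injective, so there exist indices $k_1 < k_2$ with $P_{1,k_1} = P_{1,k_2}$. By the concatenation rule, $P_{1,k_1}\, P_{k_1 + 1,\, k_2 - k_1} = P_{1,k_1}$, and since $P_{1,k_1}$ is invertible in $F$, this forces $P_{k_1 + 1,\, k_2 - k_1} = I$. Choosing $j = k_1 + 1$ and $l = k_2 - k_1$ then yields $(A_j D)^l = P_{j,l}\, D^l = D^l$, as required. The one point that needs care is the index bookkeeping in the product formula; once that identity and its concatenation law are in place, (1) comes from $\det(I - XY) = \det(I - YX)$ alone and (2) from finiteness of $F$ together with the group inverse of $P_{1,k_1}$.
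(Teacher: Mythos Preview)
Your proof is correct. The paper itself does not actually prove this lemma; it merely cites \cite{ddm05-1} and remarks that the present formulation differs only slightly from the original, so there is no in-paper argument to compare against directly. That said, your approach is very much in the spirit of what the paper does elsewhere: your step for part~(1) via $\det(I-XY)=\det(I-YX)$ is a close cousin of the conjugation trick the paper uses in Lemma~\ref{lemma alternatief lemma bram 3.1}, where invertibility of $A$ gives $\det(I-AD)=\det(A^{-1})\det(I-AD)\det(A)=\det(I-DA)$. Your route has the minor advantage of not invoking invertibility of the $A_i$, though of course that is already part of the hypothesis $F\subset\GL_n(\C)$. For part~(2), your product formula $(A_jD)^k=A_jA_{j+1}\cdots A_{j+k-1}D^k$ together with the pigeonhole/cancellation argument in the finite group $F$ is clean and entirely standard; the index bookkeeping you flagged is handled correctly.
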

%\begin{proof}
%\begin{enumerate}
%\item Note that we have the following equalities:$$\det(I-A_1D)=\det(A_1)\det(A_1^{-1}-D)=\det(I-DA_1)=\det(I-A_2D).$$By using an inductive argument, the result follows.
%\item Since the group $F$ is finite, the sequence $(A_j)_{j\in \N_0}$ becomes periodic from a certain point onwards. So, suppose this period equals $k$ and starts with $A_j$. Then, it is easy to see that$$D^kA_j=D^{k-1}A_{j+1}D=\dots =A_{j+k}D^k=A_jD^k.$$Note that one can show in a similar way that $D^kA_i=A_iD^k$, if $i\geq j$. Now consider $(A_jD)^k=A_jA_{j+1}\dots A_{j+k-1}D^k$. Because every element in the product $B=A_jA_{j+1}\dots A_{j+k-1}$ commutes with $D^k$, the product itself also commutes with $D^k$. Now suppose that $p$ is the order of $B$ in $F$. Then$$(A_jD)^{kp}=(BD^k)^p=B^pD^{kp}=D^{kp}.$$ \end{enumerate}
%\end{proof}

In \cite{dp11-1}, we can find the following theorems.

\begin{theorem}\label{thmRInf}
Let $\Gamma\subseteq \Aff(G)$ be an almost-Bieberbach group with holonomy group $F\subseteq \Aut(G)$. Let $M=\Gamma\backslash G$ be the associated infra-nilmanifold.  If 
 $f:M\to M$ is a map with affine homotopy lift $(d,D)$, then
\[
  R(f)=\infty \iff \exists A \in F \text{ such that } \det(I - A_\ast D_\ast)=0.
\]
\end{theorem}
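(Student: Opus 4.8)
The plan is to reduce the statement to the already understood case of trivial holonomy by passing to the canonical nilmanifold cover. Write $N=\Gamma\cap G$, a uniform lattice in $G$, so that $\Gamma/N\cong F$ and $\widehat M:=N\backslash G$ is a nilmanifold which covers $M=\Gamma\backslash G$ regularly with deck group isomorphic to $F$. Using the relation $\tilde f_\ast(\gamma)\circ(d,D)=(d,D)\circ\gamma$ one checks that $\tilde f_\ast$ sends a translation $(n,1)\in N$ to the translation by $dD(n)d^{-1}$, which again lies in $\Gamma\cap G=N$; hence $\tilde f_\ast(N)\subseteq N$ and $f$ admits a lift $\widehat f\colon\widehat M\to\widehat M$, with affine homotopy lift $(d,D)$ itself. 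For each $A\in F$ pick $\gamma_A\in\Gamma$ with $p(\gamma_A)=A$; then the maps $\widehat f_A:=\overline{\gamma_A}\circ\widehat f$, as $A$ ranges over $F$, are exactly the lifts of $f$ to $\widehat M$, and $\widehat f_A$ has affine homotopy lift $\gamma_A(d,D)$, whose linear part has differential $A_\ast D_\ast$. Since $\widehat M$ is a nilmanifold, the trivial-holonomy case of the statement (which is classical, and is proved via the lower central series of $N$) gives $R(\widehat f_A)=\infty$ when $\det(I-A_\ast D_\ast)=0$ and $R(\widehat f_A)=|\det(I-A_\ast D_\ast)|$ otherwise.

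It then remains to compare $R(f)$ with the numbers $R(\widehat f_A)$, and I would do this algebraically, working with $\phi:=\tilde f_\ast\colon\Gamma\to\Gamma$ and the coset decomposition $\Gamma=\bigsqcup_{A\in F}N\gamma_A$. Each twisted conjugacy class of $\widehat f_A$ is represented by an element of $N\gamma_A$, and $\phi$-twisted conjugation by elements of $N$ restricts on $N\gamma_A$ exactly to twisted conjugation for the endomorphism inducing $\widehat f_A$; this produces a map from the disjoint union of the sets of Reidemeister classes of the $\widehat f_A$ onto the set of Reidemeister classes of $f$, which is surjective because every $\phi$-class meets some coset $N\gamma_A$. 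For the fibres: if $x\in N\gamma_A$ and $y=\gamma x\phi(\gamma)^{-1}\in N\gamma_{A'}$, then $A'=p(\gamma)\,A\,p(\phi(\gamma))^{-1}$, and since $p(\phi(\gamma))$ depends only on the coset $\gamma N$, the class of $y$ in $\bigsqcup_A\mathcal{R}(\widehat f_A)$ is pinned down by the holonomy component $p(\gamma)\in F$ of the conjugating element; hence each fibre has at most $\#F$ elements. Combining the two bounds gives
\[ \frac{1}{\#F}\sum_{A\in F}R(\widehat f_A)\ \le\ R(f)\ \le\ \sum_{A\in F}R(\widehat f_A), \]
so $R(f)=\infty$ precisely when some $R(\widehat f_A)=\infty$, i.e.\ precisely when $\det(I-A_\ast D_\ast)=0$ for some $A\in F$. (The left inequality is the Reidemeister counterpart of the averaging formula $N(f)=\tfrac{1}{\#F}\sum_A N(\widehat f_A)=\tfrac{1}{\#F}\sum_A|\det(I-A_\ast D_\ast)|$ coming from Theorem~\ref{LeeForm}.)

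The main obstacle is the bookkeeping in the second step: one must make the correspondence between $\phi$-twisted conjugacy classes of $\Gamma$ and $F$-orbits of twisted conjugacy classes of the lifts fully precise, keeping track of the fact that a holonomy element $A$ does not normalize $N$ on the nose but only up to the inner automorphism by $\gamma_A$, and that the induced self-map $A\mapsto p(\phi(\gamma_A))$ of $F$ need not be injective; Lemma~\ref{Lemma Bram} is convenient here for handling how the factors $A_\ast D_\ast$ behave along such chains. Once this is in place, the remaining ingredients — that $\tilde f_\ast(N)\subseteq N$, the identification of the lifts of $f$ together with their differentials, and the nilmanifold case — are routine.
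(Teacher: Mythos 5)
Note first that the paper itself does not prove this theorem; it is cited from \cite{dp11-1}, so there is no in-paper proof to compare against, and the proposal has to stand on its own.

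There is a genuine gap at the first step. You deduce $\tilde f_\ast(N)\subseteq N$, with $N=\Gamma\cap G$, by computing that $\tilde f_\ast$ sends a translation $(n,\Id)$ to a translation. But if one writes $\tilde f_\ast(n,\Id)=(m,B)$, the relation $\tilde f_\ast(n,\Id)(d,D)=(d,D)(n,\Id)$ gives only $BD=D$ and $mB(d)=dD(n)$, and $BD=D$ forces $B=\Id$ only when $D$ is invertible. Since $(d,D)$ lies in $\aff(G)$ rather than $\Aff(G)$, $D$ can fail to be invertible, in which case $B$ may be a nontrivial element of $F$ acting trivially on the image of $D$, and $f$ need not lift to $N\backslash G$ at all. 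This is precisely what Corollary~\ref{corollary D invertible} is recording: $f_\#(\Id)$ is a singleton only when $D$ is invertible. For the same reason the paper's own covering arguments (in Theorems~\ref{thmessredgcd} and~\ref{thmNAf}) lift not through $\Gamma\cap G$ but through the fully characteristic finite-index subgroup $\Lambda\subset G$ of \cite{ll06-1}. Replacing $N$ by $\Lambda$ is the right fix, but it is not cosmetic: your bookkeeping in the second step identifies the deck group with $F$ and counts $\#F$ lifts and at most $\#F$ elements per fibre of $\Psi$, whereas the deck group of $\Lambda\backslash G\to M$ is the larger group $\Gamma/\Lambda$. The argument can be repaired --- the lifts $\overline{\gamma}\circ\widehat f$ for $\gamma\in\Gamma/\Lambda$ have affine homotopy lifts $\gamma(d,D)$ whose linear parts still range only over $\{AD:A\in F\}$, and the surjectivity and fibre-bound arguments for $\Psi$ go through with $\#(\Gamma/\Lambda)$ in place of $\#F$ --- but as written the proof does not cover the non-invertible case. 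The remaining ingredients (the nilmanifold formula $R(\widehat f_A)=|\det(I-A_\ast D_\ast)|$ or $\infty$, the compatibility of $\phi$-twisted conjugation with the coset decomposition, and the fibre bound) are sound once the cover is chosen so that $f$ actually lifts to it; when $D$ is invertible your argument works as stated.
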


\begin{theorem}\label{thmN=R}
Let $f$ be a map on an infra-nilmanifold, such that $R(f)<\infty$, then $N(f)=R(f)$.
\end{theorem}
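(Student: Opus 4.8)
The plan is to reduce the statement to a purely matrix-theoretic fact about the sum $\sum_{A\in F}|\det(I-A_\ast D_\ast)|$ and then identify this with the Reidemeister number via the results already available. By Theorem~\ref{LeeForm}, we know $N(f)=\frac{1}{\#F}\sum_{A\in F}|\det(I-A_\ast D_\ast)|$, and by Theorem~\ref{thmRInf}, the hypothesis $R(f)<\infty$ is equivalent to $\det(I-A_\ast D_\ast)\neq 0$ for every $A\in F$. So the content to be proved is that, under this nonvanishing hypothesis, $R(f)=\frac{1}{\#F}\sum_{A\in F}|\det(I-A_\ast D_\ast)|$.

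First I would recall the standard description of Reidemeister classes for maps on infra-nilmanifolds in terms of the semidirect product structure: an affine homotopy lift $(d,D)$ induces an endomorphism $\tilde f_\ast$ of $\Gamma$, and $R(f)$ counts the $\tilde f_\ast$-twisted conjugacy classes in $\Gamma$. Restricting attention to $\Gamma\cap G=:N$ (a lattice in $G$), the holonomy group $F$ acts on the set of twisted conjugacy classes in $N$, and there is an exact-sequence/averaging argument (as in the Lee--Lee computation and in \cite{dp11-1}) expressing $R(f)$ as a sum over $F$ of twisted Reidemeister numbers on the nilmanifold pieces. On the nilpotent lattice level, for an endomorphism with differential $A_\ast D_\ast$ one has the classical fact that the number of twisted conjugacy classes equals $|\det(I-A_\ast D_\ast)|$ when this determinant is nonzero (and is infinite otherwise) — this is the nilmanifold analogue of $R=|\det(I-D)|$ on the torus. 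Combining the averaging over $F$ with this per-summand count yields $R(f)=\frac{1}{\#F}\sum_{A\in F}|\det(I-A_\ast D_\ast)|$, which by Theorem~\ref{LeeForm} is exactly $N(f)$.

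Here Lemma~\ref{Lemma Bram} is the technical workhorse: the relation $D_\ast A_\ast = B_\ast D_\ast$ (coming from $F$ being normalized appropriately by $D$, which holds because $D$ conjugates $\Gamma$ compatibly) lets us organize $F$ into $D$-orbits, guarantees that $\det(I-A_\ast D_\ast)$ is constant along each such orbit (part 1), and — via part 2, which gives $(A_\ast D_\ast)^l = D_\ast^l$ for suitable $l$ — ensures the relevant Reidemeister/fixed-point data is compatible across iterates so that the averaging is well-defined and the twisted classes on different holonomy sheets do not collapse unexpectedly. I would also invoke that $N(f)\le R(f)$ always holds (general Nielsen theory), so that once the inequality $N(f)\ge R(f)$ is established — or once $R(f)$ is shown to equal the Lee--Lee sum directly — equality follows; indeed the cleanest route is to prove $R(f)$ equals that sum outright rather than going through inequalities.

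The main obstacle I anticipate is the bookkeeping in the averaging step: making precise how the $F$-action on twisted conjugacy classes in $N$ interacts with the non-uniqueness of $D$ (it is only defined up to an inner automorphism of $G$, as noted after Theorem~\ref{leemaps}) and verifying that the stabilizers/fixed-point contributions assemble to give precisely the factor $\frac{1}{\#F}$ with no correction terms. This is where one must be careful that each $A\in F$ contributes $|\det(I-A_\ast D_\ast)|$ and not some multiple or fraction thereof; Lemma~\ref{Lemma Bram}(1) is what rescues this, since it shows the contributions are constant on orbits of the right size. A secondary subtlety is ensuring the finiteness hypothesis $R(f)<\infty$ propagates correctly to each summand — but Theorem~\ref{thmRInf} handles exactly this, so it is really just a matter of citing it in the right place.
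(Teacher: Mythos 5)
The paper does not prove Theorem~\ref{thmN=R} itself; it imports it from~\cite{dp11-1}. So your proposal has to be judged against the standard argument, and there the comparison is instructive: the actual proof is considerably shorter than the route you sketch, and the step you would most need is precisely the one you leave vague.

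Given $R(f)<\infty$, Theorem~\ref{thmRInf} tells you $\det(I-A_\ast D_\ast)\neq 0$ for every $A\in F$. Lifting a fixed point class $p(\Fix((a,A)\circ\tilde f))$ along the finite cover by the nilmanifold $\Lambda\backslash G$ preserves the index (a covering map is a local homeomorphism), and on the nilmanifold Anosov/Fadell--Husseini give essentiality of the lifted class $\iff\det(I-A_\ast D_\ast)\neq 0$; this is exactly Remark~\ref{remark essential of inessential}. Hence every fixed point class of $f$ is essential, and since $N(f)$ by definition counts the essential ones among the $R(f)$ classes, $N(f)=R(f)$ follows immediately. No averaging and no appeal to Lemma~\ref{Lemma Bram} are needed.

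Your route instead tries to establish $R(f)=\frac{1}{\#F}\sum_{A\in F}|\det(I-A_\ast D_\ast)|$ directly, and then match it with $N(f)$ via Theorem~\ref{LeeForm}. The genuine gap is in the averaging step: to justify the factor $\tfrac{1}{\#F}$ (equivalently, that the natural surjection $\Psi:\mathcal R_\Lambda(f)\to\mathcal R(f)$ has fibers of exactly $[\Gamma:\Lambda]$ elements) one must show that the $[\Gamma:\Lambda]$ candidate preimages are pairwise distinct. In the paper's proof of Theorem~\ref{thmNAf} this is done by invoking Proposition~\ref{propaff}: an \emph{essential} class of an affine map is a singleton, so the free action of $\Gamma$ forces the representatives apart. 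That is, the fiber count requires essentiality. If you already know all classes are essential, you are done without any averaging; if you do not, the averaging does not go through as stated. Moreover, Lemma~\ref{Lemma Bram} is not the right tool here: part~(1) gives invariance of $\det(I-A_\ast D_\ast)$ along $D$-orbits (useful for Lemma~\ref{lemdet} and the $\sim_f$-class machinery) and part~(2) concerns powers and is used for the semi-hyperbolic iterate results; neither addresses the fiber-counting of Reidemeister classes that your averaging needs. So the proposal identifies the right ambient picture but misdiagnoses the key technical step and reaches for a harder statement than the theorem requires.
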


We will also mention the following definition.

\begin{definition}
Let $M$ be an infra-nilmanifold and $f:M\to M$ be a continuous map, with $(d,D)$ as an affine homotopy lift. We say that $f$ is a \textbf{hyperbolic} map if $D_\ast$ has no eigenvalues of modulus $1$. We say that $f$ is \textbf{semi-hyperbolic} if $D_\ast$ has no eigenvalues which are roots of unity.
\end{definition}

This class of (semi-)hyperbolic maps contains for example the class of expanding maps and the class of Anosov diffeomorphisms.

\section{Nielsen periodic point theory}
%For a compact polyhedron $X$ and a continuous self-map $f:X\to X$, one can define certain integers that give information about the fixed points of $f$. One of these integers is the Lefschetz number $L(f)$ which is defined as \[ L(f)= \sum_{i=0}^{{\Dim}\;X} (-1)^i {\Tr} \left( f_{\ast,i} : H_i(X,\Q) \to H_i(X,\Q)\right).\]
%The Lefschetz fixed point theorem states that if $L(f)\neq 0$, $f$ has at least one fixed point. Because the Lefschetz number is only defined in terms of homology groups, it remains invariant under a homotopy and hence, the Lefschetz fixed point theorem remains true for maps homotopic to $f$.
%
%\medskip
%
%Another such integer is the Nielsen number $N(f)$, which is defined as the number of essential fixed point classes of $f$. Because these essential fixed point classes cannot completely vanish under a homotopy, $N(f)$ will be a lower bound for the number of fixed points of any map homotopic to $f$. In general, $N(f)$ will give more information about the fixed points of $f$ than $L(f)$, but the downside is that Nielsen numbers are often much harder to compute than Lefschetz numbers. More information on both numbers can be found in e.g. \cite{brow71-1}, \cite{jian83-1} and \cite{kian89-1}.
%
%\medskip

In this section, we will mostly follow the outline of \cite{jm06-1}. Many of the difficult aspects of Nielsen periodic point theory will disappear when working on infra-nilmanifolds. Therefore, we will try to present all the necessary results in a swift way and skip most of the proofs and unnecessary details. More information about Nielsen periodic point theory in general can be found in \cite{hk97-1},\cite{heat99-1},\cite{jm06-1} or \cite{jian83-1}.

\medskip

When $f^n(x)=x$, we call $x$ a periodic point. If $n$ is the smallest integer for which this holds, $x$ is a periodic point of pure period $n$. We can apply similar techniques as in Nielsen fixed point theory to achieve Nielsen periodic point theory. Just like Nielsen fixed point theory divides $\Fix(f)$ into different fixed point classes, Nielsen periodic point theory divides $\Fix(f^n)$ into different fixed point classes, for all $n>0$ and looks for relations between fixed point classes on different levels. This idea is covered in the following definition.

\begin{definition}\label{defboost}
Let $f:X \to X$ be a self-map. If $\F_k$ is a fixed point class of $f^k$, then $\F_k$ will be contained in a fixed point class $\F_{kn}$ of $(f^k)^n$, for all $n$. We say that $\F_k$ \textbf{boosts} to $\F_{kn}$. On the other hand, we say that $\F_{kn}$ \textbf{reduces} to $\F_k$.
\end{definition}

This idea of boosting a fixed point class also has a more algebraic interpretation. Fix a lifting $\tilde{f}$ of $f$ to the universal covering $(\tilde{X},p)$ of $X$. Then $\tilde{f}$ induces a homomorphism $f_\ast$ on the group of covering transformations by using the following relation:$$f_\ast (\alpha)\circ \tilde{f}=\tilde{f}\circ \alpha.$$Let us denote the set of Reidemeister classes of $f$ by $\mathcal{R}(f)$. Any element of this set will be denoted by the Reidemeister class $[\alpha]$, where $\alpha$ is the coordinate of a lifting $\alpha\circ \tilde{f}$. Let $k,n$ be integers, such that $k|n$. We then define the following boosting function:
$$\gamma_{nk}: \mathcal{R}(f^k)\to \mathcal{R}(f^n):[\alpha]\mapsto [\alpha f_\ast^k(\alpha)f_\ast^{2k}(\alpha)\dots f_\ast^{n-k}(\alpha)].$$The idea behind this boosting function, is the fact that $$(\alpha\circ \tilde{f}^k)^\frac{n}{k}=\alpha f_\ast^k(\alpha)f_\ast^{2k}(\alpha)\dots f_\ast^{n-k}(\alpha)\circ \tilde{f}^n.$$This equality immediately shows that if $\gamma_{nk}([\beta])=[\alpha]$, the fixed point class $p\Fix(\beta\circ \tilde{f}^k)$ will be contained in the fixed point class $p\Fix(\alpha \circ \tilde{f}^n)$. Hence, our algebraic definition makes sense in retrospect to Definition \ref{defboost}.

\medskip

In this paper, we will often make a slight abuse of notation. Whenever we use the expression $[\alpha]_k$, we will simultaneously consider the Reidemeister class $[\alpha]\in \mathcal{R}(f^k)$ and the fixed point class $p(\Fix(\alpha\circ \tilde{f}^k))$. We will also often switch between both of these interpretations, whenever necessary. Note that both interpretations are essentially the same due to the one-to-one correspondence between Reidemeister classes and fixed point classes. This also means that we make a distinction between empty fixed point classes that come from a different Reidemeister classes. In a certain sense, this approach coincides with the idea of \textit{labeled fixed point classes}, in \cite{jian83-1}.

\medskip             

Note that this description only depends on the homomorphism $f_*$. So, when $f$ and $g$ induce the same morphism on the group of covering transformations, then the structure of their periodic point classes will be the same. More specifically, this means that the whole description above (and everything that will follow) is homotopy-invariant.  

\medskip 

When $k|m|n$, an easy computation shows that $\gamma_{nm}\gamma_{mk}=\gamma_{nk}$. Also, $\gamma_{nn}=\Id_{\mathcal{R}(f^n)}$.

\medskip

Actually, to give the precise definition of Nielsen periodic point theory, we need a little more than this definition in terms of classes, namely a definition in terms of orbits.  Define the following map$$\mathcal{R}_f:\mathcal{R}(f^n)\to \mathcal{R}(f^n):[\alpha]\mapsto[f_\ast(\alpha)].$$One can easily see that this map is well-defined and that $\mathcal{R}_f^n=\Id_{\mathcal{R}(f^n)}$. Furthermore, by using the commutativity property of the fixed point index on the maps $f$ and $f^{n-1}$, it is clear that this map preserves the index of the associated fixed point classes. By identifying $[\alpha]$ with $[f_\ast(\alpha)]$ in $\mathcal{R}(f^n)$, for all $\alpha$, we find the quotient set $\mathcal{OR}(f^n)$ of orbits of Reidemeister classes. Since the index is preserved in every orbit, it makes sense to talk about essential and inessential orbits. One can also notice that boosting functions make sense in terms of orbits, so we can talk about reducible and irreducible orbits, depending on whether they have a pre-image under a boosting function or not.

\begin{lemma} [\cite{jm06-1}, 5.1.13]\label{lemessirred}
If $\mathcal{A}\in \mathcal{OR}(f^n)$ is essential and irreducible, then this orbit contains at least $n$ periodic points of period $n$.
\end{lemma}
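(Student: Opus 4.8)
The plan is to show that an essential, irreducible orbit $\mathcal{A}\in\mathcal{OR}(f^n)$ is forced to meet only points of \emph{pure} period $n$, and then to count. First I would unwind what an orbit of Reidemeister classes looks like geometrically: $\mathcal{A}$ consists of classes $[\alpha], [f_\ast(\alpha)], [f_\ast^2(\alpha)],\dots$, and since $\mathcal{R}_f^n=\Id$, the orbit has size $m$ dividing $n$, where $m$ is the least positive integer with $[f_\ast^m(\alpha)]=[\alpha]$ in $\mathcal{R}(f^n)$. The associated fixed point classes $p\Fix(f_\ast^i(\alpha)\circ\tilde f^n)$ are permuted cyclically by $f$, and all of them carry the same (nonzero, since $\mathcal{A}$ is essential) index. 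So $\mathcal{A}$, viewed as a subset of $\Fix(f^n)$, is a nonempty $f$-invariant set: its points are genuine $n$-periodic points of $f$.

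The heart of the argument is that each point $x$ lying in (one of the fixed point classes making up) $\mathcal{A}$ actually has pure period $n$. Suppose not: then $x$ has pure period $d$ for some proper divisor $d$ of $n$, so $x\in\Fix(f^d)$ and $x$ belongs to some fixed point class $[\beta]_d$ of $f^d$. By the compatibility of boosting with the inclusion of fixed point classes described in the excerpt (the identity $(\beta\circ\tilde f^d)^{n/d}=\beta f_\ast^d(\beta)\cdots f_\ast^{n-d}(\beta)\circ\tilde f^n$), the class $[\beta]_d$ boosts into the class of $\mathcal{A}$ that contains $x$; passing to orbits, $\mathcal{A}=\gamma_{nd}$-image of the orbit of $[\beta]_d$, contradicting irreducibility of $\mathcal{A}$. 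Hence no point of $\mathcal{A}$ can have period a proper divisor of $n$: every point of $\mathcal{A}$ has pure period $n$.

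It remains to check that $\mathcal{A}$ actually contains a point, and in fact at least $n$ of them. Essentiality gives a point in at least one of the constituent fixed point classes, say $x\in p\Fix(\alpha\circ\tilde f^n)$; by the previous paragraph $x$ has pure period $n$, so its $f$-orbit $\{x, f(x),\dots, f^{n-1}(x)\}$ consists of $n$ distinct points. These $n$ points all lie in $\mathcal{A}$: applying $f$ carries $p\Fix(f_\ast^i(\alpha)\circ\tilde f^n)$ into $p\Fix(f_\ast^{i+1}(\alpha)\circ\tilde f^n)$, so the whole orbit stays inside the union of fixed point classes that make up $\mathcal{A}$. Therefore $\mathcal{A}$ contains at least $n$ periodic points of period $n$.

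The main obstacle is the bookkeeping in the second paragraph: one has to be careful that the empty-class convention (distinguishing empty classes coming from different Reidemeister classes) does not cause trouble, and that ``irreducible as an orbit'' is correctly translated — namely that $\mathcal{A}$ has no preimage under \emph{any} boosting function $\gamma_{nd}$ with $d\mid n$, $d<n$ — so that the boosted class of $[\beta]_d$ really does witness reducibility. Everything else (the orbit having size dividing $n$, index preservation along orbits, the geometric interpretation of boosting) is already set up in the preceding discussion and can be quoted directly.
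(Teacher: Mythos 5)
Your proof is correct, and it is the standard argument. The paper itself does not prove this lemma (it is quoted from the reference [jm06-1], 5.1.13), so there is no in-paper proof to compare against; but your three steps — (i) if a point of the orbit had pure period $d<n$ then its class $[\beta]_d$ in $\mathcal{R}(f^d)$ would boost into $\mathcal{A}$, contradicting irreducibility; (ii) essentiality forces a point $x$ to exist; (iii) $f$ carries $p\Fix(f_\ast^i(\alpha)\circ\tilde f^n)$ into $p\Fix(f_\ast^{i+1}(\alpha)\circ\tilde f^n)$, so the $n$ distinct points $x,f(x),\dots,f^{n-1}(x)$ all lie in the union of classes making up $\mathcal{A}$ — are exactly what is needed, and each step is justified by the facts set up in the surrounding discussion (index is preserved along the $\mathcal{R}_f$-orbit, fixed point classes partition $\Fix(f^n)$, and $\gamma_{nn}=\Id$ so irreducibility means no preimage under $\gamma_{nd}$ for any proper divisor $d$ of $n$).
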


This lemma gives us the idea for the following definition.

\begin{definition}
We define the \textbf{prime Nielsen-Jiang periodic number} $NP_n(f)$ as $$n \times \textrm{(number of irreducible essential orbits in }\mathcal{OR}(f^n)).$$
\end{definition}

If $P_n(f)$ is the set of periodic points of $f$ of pure period $n$, then Lemma \ref{lemessirred} ensures us that $NP_n(f)$ is a homotopy-invariant lower bound of $\# P_n(f)$.

\medskip

We would also like to find a similar lower bound for $\# \Fix(f^n)$. Pick an arbitrary $\mathcal{A}\in \mathcal{OR}(f^n)$. We define the depth $d(\mathcal{A})$ to be the least divisor $k$ of $n$, such that $\mathcal{A}\in \textrm{Im}(\gamma_{nk})$.

\begin{definition}
Let $n$ be a fixed positive integer. A subset $\mathcal{PS}\subset \bigcup_{k|n}\mathcal{OR}(f^k)$ is called a \textbf{preceding system} if every essential orbit $\mathcal{A}$ in $\bigcup_{k|n}\mathcal{OR}(f^k)$ is preceded by an element of $\mathcal{PS}$. Such a preceding system is called \textbf{minimal} if the number $\sum_{\mathcal{A}\in \mathcal{PS}}d(\mathcal{A})$ is minimal.
\end{definition}

\begin{definition}
The \textbf{full Nielsen-Jiang periodic number} $NF_n(f)$ is defined as $\sum_{\mathcal{A}\in \mathcal{PS}}d(\mathcal{A})$, where $\mathcal{PS}$ is a minimal preceding system.
\end{definition}

\begin{theorem}[\cite{jm06-1}, 5.1.18]
$NF_n(f)$ is a homotopy-invariant lower bound for the number $\# \Fix(f^n)$.
\end{theorem}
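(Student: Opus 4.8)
The plan is to prove that $NF_n(f)$ is a homotopy-invariant lower bound for $\#\Fix(f^n)$ by exhibiting, for any given $f$, a collection of fixed points of $f^n$ — one associated to each orbit in a minimal preceding system — whose cardinality equals $NF_n(f)$, and then invoking homotopy-invariance of the underlying combinatorial data. The homotopy-invariance itself is immediate from the discussion preceding the statement: the entire apparatus ($\mathcal R(f^k)$, the boosting maps $\gamma_{nk}$, the operator $\mathcal R_f$, essentiality of orbits, the depth function $d$, hence the set of minimal preceding systems and the number $NF_n(f)$) depends only on the induced homomorphism $f_\ast$ on the group of covering transformations and on the fixed point indices of the classes, and both of these are preserved under homotopy. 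So the real content is the lower-bound inequality $NF_n(f)\le \#\Fix(f^n)$.

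First I would fix a minimal preceding system $\mathcal{PS}\subset\bigcup_{k|n}\mathcal{OR}(f^k)$ and, for each orbit $\mathcal A\in\mathcal{PS}$, recall that $d(\mathcal A)$ is the least $k\mid n$ with $\mathcal A\in\operatorname{Im}(\gamma_{nk})$ — wait, more precisely $\mathcal A\in\mathcal{OR}(f^k)$ with $k=d(\mathcal A)$ by the standard convention that a preceding system is chosen to consist of orbits sitting at their own depth. The key point is that each such $\mathcal A$, being essential, contains at least one fixed point of $f^k$, and since $\mathcal A$ is an orbit of $\mathcal R_f$ of size exactly $k=d(\mathcal A)$ when it is irreducible at that level (by Lemma \ref{lemessirred} it contains at least $k$ periodic points of pure period $k$), it contributes $d(\mathcal A)$ points. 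These points, being periodic of period dividing $k\mid n$, are fixed points of $f^n$. I would then argue that points coming from distinct orbits of $\mathcal{PS}$ are genuinely distinct in $\Fix(f^n)$: they lie in distinct fixed point classes of $f^n$ after boosting, because if two orbits of $\mathcal{PS}$ boosted to the same orbit in $\mathcal{OR}(f^n)$ one could be removed or replaced to decrease $\sum_{\mathcal A\in\mathcal{PS}}d(\mathcal A)$, contradicting minimality. Summing, $\#\Fix(f^n)\ge\sum_{\mathcal A\in\mathcal{PS}}d(\mathcal A)=NF_n(f)$.

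The main obstacle, and the step that needs the most care, is the disjointness/counting argument: one must show that the periodic points selected from different elements of the minimal preceding system can be taken to be pairwise distinct, and that the irreducibility forcing $d(\mathcal A)$ genuinely distinct points (as opposed to a single point of smaller period counted with multiplicity) is compatible with the minimality of $\mathcal{PS}$. This is exactly where the definition of preceding system interacts with Lemma \ref{lemessirred}: an essential orbit at level $k$ that reduces to level $m\mid k$ should be preceded via its reduction, so that in a minimal $\mathcal{PS}$ every orbit is represented at its depth and is irreducible there, hence contributes its full depth in distinct points. Since this is quoted verbatim from \cite{jm06-1}, Theorem 5.1.18, I would in practice simply cite that reference for the delicate part and restrict the in-paper argument to recalling why homotopy-invariance holds in our setting and why the quoted combinatorial lower bound is the relevant one for the subsequent computations on infra-nilmanifolds.
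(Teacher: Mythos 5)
The paper does not prove this statement; it is quoted directly from Jezierski and Marzantowicz as cited background. So there is no in-paper argument to compare against, and I will evaluate your sketch on its own merits.

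Your treatment of homotopy-invariance is fine and matches the discussion the paper gives before the statement: the whole apparatus depends only on the induced homomorphism $f_\ast$ and on the indices, both homotopy-invariant.

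The lower-bound argument, however, runs in the wrong direction and hides a genuine gap. Since $NF_n(f)$ is defined as a \emph{minimum} of $\sum_{\mathcal A\in\mathcal{PS}}d(\mathcal A)$ over preceding systems, to prove $NF_n(f)\le\#\Fix(f^n)$ it suffices to exhibit \emph{one} preceding system whose depth-sum is at most $\#\Fix(f^n)$; you do not get to choose which one achieves the minimum. You instead fix a minimal preceding system and try to extract $d(\mathcal A)$ distinct fixed points from each of its orbits. That forces you to prove nontrivial structural facts about minimal preceding systems: (i) that each orbit in a minimal $\mathcal{PS}$ is essential (otherwise it may be empty and contributes no points at all --- recall the definition of a preceding system only requires that the essential orbits be \emph{preceded} by members of $\mathcal{PS}$, it does not require the members themselves to be essential); (ii) that each sits at its own depth and is irreducible there, so that Lemma~\ref{lemessirred} applies. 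Neither is obvious without assuming extra structure such as essential reducibility, which is exactly what the theorem is \emph{not} allowed to assume. Your proposed fix for disjointness is also wrong: two orbits $\mathcal A_1$ at level $2$ and $\mathcal A_2$ at level $3$ may boost into the same level-$6$ orbit while each being independently necessary (each preceding some essential orbit at its own level that the other does not), so ``one could be removed'' does not follow from minimality.

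The clean argument goes the other way. For each $x\in\Fix(f^n)$ let $m(x)$ be its pure period and let $\mathcal B_x\in\mathcal{OR}(f^{m(x)})$ be the orbit containing $x$. The collection $\mathcal{PS}'=\{\mathcal B_x : x\in\Fix(f^n)\}$ is a preceding system: every essential orbit $\mathcal A$ at a level $k\mid n$ is nonempty, hence contains some $x\in\Fix(f^k)\subset\Fix(f^n)$, and $\gamma_{k,m(x)}(\mathcal B_x)=\mathcal A$. For each $\mathcal B\in\mathcal{PS}'$ at level $m$, the $f$-orbit of any of its generating points supplies $m$ distinct points of pure period $m$ lying in $\mathcal B$, and these sets are pairwise disjoint across distinct $\mathcal B$ (distinct orbits at the same level have disjoint underlying fixed point sets; orbits at different levels are separated because the selected points have different pure periods). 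Since $d(\mathcal B)\le m$, summing gives $\sum_{\mathcal B\in\mathcal{PS}'}d(\mathcal B)\le\#\Fix(f^n)$, and minimality yields $NF_n(f)\le\#\Fix(f^n)$. This avoids all claims about the structure of a minimal preceding system and uses only the pure period as the separating invariant, which is the point your sketch needed but did not supply.
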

Note that every preceding system must contain every essential irreducible orbit of $\mathcal{OR}(f^n)$. Since every of these orbits has a depth of $n$, we know that the following inequality holds:$$\sum_{k|n}NP_k(f)\leq NF_n(f).$$

An important definition that gives some structure to the boosting and reducing relations is the following.

\begin{definition}
A self-map $f:X\to X$ will be called \textbf{essentially reducible} if, for all $n,k$, essential fixed point classes of $f^{kn}$ can only reduce to essential fixed point classes of $f^k$. A space $X$ is called essentially reducible if every self-map $f:X\to X$ is essentially reducible.
\end{definition}

It can be shown that the fixed point classes for maps on infra-nilmanifolds always have this nice structure for their boosting and reducing relations.

\begin{theorem}[\cite{lz07-1}]\label{thmleezhao}
Infra-nilmanifolds are essentially reducible.
\end{theorem}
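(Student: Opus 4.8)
The plan is to reduce the statement to a purely algebraic fact about the Reidemeister/orbit structure of an affine homotopy lift, and then use the determinant formula for the index together with Lemma \ref{Lemma Bram}. Let $f:M\to M$ be a self-map of an infra-nilmanifold with affine homotopy lift $(d,D)$, and let $F$ be the holonomy group. Suppose $\F_{kn}$ is an essential fixed point class of $f^{kn}$ that reduces to a fixed point class $\F_k$ of $f^k$; I must show $\F_k$ is essential. Passing to iterates, it suffices to treat the case $k=1$, i.e.\ to show that if an essential fixed point class of $f^n$ reduces to a class $[\alpha]_1$ of $f$, then $[\alpha]_1$ is essential. Translating via the boosting function $\gamma_{n1}$, "reduces to $[\alpha]_1$" means the essential class of $f^n$ is $[\alpha f_\ast(\alpha)\cdots f_\ast^{n-1}(\alpha)]_n$.

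Next I would make the index computation explicit. An affine homotopy lift of $f^n$ is $(d,D)^n$, and more generally the lift corresponding to the Reidemeister class $\gamma_{n1}([\alpha])$ in $\mathcal R(f^n)$ is a composition of $(d,D)^n$ with a holonomy element; concretely, the fixed point class $[\alpha f_\ast(\alpha)\cdots f_\ast^{n-1}(\alpha)]_n$ has index (up to sign) $\det(I - A_\ast D_\ast^n)$ for an appropriate $A\in F$ determined by the holonomy part of $\alpha$, while the index of $[\alpha]_1$ is (up to sign) $\det(I - B_\ast D_\ast)$ where $B\in F$ is the holonomy part of $\alpha$. Here I would invoke the averaging formula of Theorem \ref{LeeForm} together with the analysis in \cite{ll09-1} that distributes the terms $\det(I-A_\ast D_\ast)$ over the individual fixed point classes — each Reidemeister class of $f$ whose lift has holonomy part $B$ contributes the single determinant $\det(I - B_\ast D_\ast)$ to $N(f)$, and similarly at level $n$. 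The key point is then the multiplicative relation between these two determinants: if $B$ is the holonomy part of $\alpha$ and $(B_j)$ is the sequence built from $D B_j = B_{j+1} D$ as in Lemma \ref{Lemma Bram} (the hypothesis $DA=BD$-type condition holds because $F$ is normalized appropriately, $D_\ast F_\ast D_\ast^{-1}\subseteq F_\ast$ in the relevant sense), then the holonomy part $A$ governing the boosted class at level $n$ satisfies $A_\ast D_\ast^n = (B_n D_\ast)(B_{n-1}D_\ast)\cdots(B_1 D_\ast)$, and by part (1) of Lemma \ref{Lemma Bram} each factor $\det(I - B_j D_\ast)$ equals $\det(I - B_1 D_\ast) = \det(I - B_\ast D_\ast)$.

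From here the argument is a contrapositive: suppose $[\alpha]_1$ is inessential, i.e.\ $\det(I - B_\ast D_\ast)=0$. Then by Theorem \ref{thmRInf} we are exactly in the situation $R(f)=\infty$ (there is an $A\in F$ with $\det(I-A_\ast D_\ast)=0$, in particular this one), and more to the point $B_\ast D_\ast$ has eigenvalue $1$. Using part (2) of Lemma \ref{Lemma Bram}, $(B_j D_\ast)^l = D_\ast^{l}$ for suitable $j,l$, so the eigenvalue-$1$ phenomenon propagates to $A_\ast D_\ast^n$: the product $A_\ast D_\ast^n = (B_nD_\ast)\cdots(B_1D_\ast)$ has determinant $\prod_j \det(B_j D_\ast) = (\det(B_\ast D_\ast))^n$ and, crucially, $\det(I - A_\ast D_\ast^n) = 0$ because $1$ is an eigenvalue of the product whenever it is an eigenvalue of each conjugate factor with a coherent eigenvector — this coherence is precisely what part (2) of Lemma \ref{Lemma Bram} supplies, since $D_\ast^{l}$ fixes a vector forces compatibility. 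Hence the boosted class of $f^n$ has index $0$, i.e.\ is inessential, contradicting the assumption. Therefore essential classes of $f^n$ can only reduce to essential classes of $f$, and by the iterate reduction the same holds for every $f^{kn}\to f^k$, which is exactly essential reducibility.

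The main obstacle I expect is the bookkeeping in the middle paragraph: one must verify carefully that the determinant $\det(I - B_\ast D_\ast)$ attached to the level-$1$ class $[\alpha]_1$ really boosts to $\det(I - A_\ast D_\ast^n)$ with $A_\ast D_\ast^n$ equal to the ordered product $(B_nD_\ast)\cdots(B_1D_\ast)$, i.e.\ that the holonomy parts compose correctly under iteration and that the sequence $(B_j)$ produced by the relation $DB_j = B_{j+1}D$ is exactly the one governing the iterated lift. This requires unwinding the semidirect-product multiplication in $\aff(G)$ and the identification of $\mathcal R(f)$ in \cite{ll09-1}; once that dictionary is in place, Lemma \ref{Lemma Bram} does all the real work and the rest is the short contrapositive above.
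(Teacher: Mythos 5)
Your high-level strategy --- translate ``(in)essential'' into (non)vanishing of $\det(I-B_\ast D_\ast^k)$ via Remark \ref{remark essential of inessential} and then push inessentiality through the boosting function --- is the right algebraic route (the paper itself does not reprove this statement but cites \cite{lz07-1}). However, the central identity in your middle paragraph is wrong. With $B_1$ the holonomy part of $\alpha$ and the sequence $(B_j)$ built from $DB_j=B_{j+1}D$ (this is exactly the relation supplied by $f_\ast(\gamma)(d,D)=(d,D)\gamma$), the boost $\gamma_{n1}([\alpha])$ has holonomy part $B_1B_2\cdots B_n$, and
\[
A_\ast D_\ast^n \;=\; (B_1B_2\cdots B_n)_\ast D_\ast^n \;=\; (B_{1\ast}D_\ast)^n ,
\]
a genuine $n$-th power. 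Your ordered product $(B_nD_\ast)(B_{n-1}D_\ast)\cdots(B_1D_\ast)$ is a different matrix: telescoping with $DB_{j-1}=B_jD$ gives $(B_nD_\ast)(B_{n-1}D_\ast)\cdots(B_1D_\ast)=B_{n\ast}^nD_\ast^n$, and in general $\det\bigl(I-B_n^nD^n\bigr)\neq\det\bigl(I-(B_1D)^n\bigr)$ unless $F$ is abelian or all $B_j$ coincide.

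Once the boosted matrix is written correctly as $(B_{1\ast}D_\ast)^n$, the ``eigenvector coherence'' problem you try to resolve with Lemma \ref{Lemma Bram}(2) simply does not arise: if $\det(I-B_{1\ast}D_\ast)=0$ then $1$ is an eigenvalue of $B_{1\ast}D_\ast$, hence of $(B_{1\ast}D_\ast)^n$, hence $\det\bigl(I-(B_{1\ast}D_\ast)^n\bigr)=0$, and by Remark \ref{remark essential of inessential} applied to $f^n$ the boosted class is inessential. The appeal to Lemma \ref{Lemma Bram}(2) is moreover not a valid deduction even for your (incorrect) product: that part of the lemma produces $(A_jD)^l=D^l$ for \emph{some} $j,l$ and says nothing about a common fixed vector of the distinct factors $B_nD,\dots,B_1D$. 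That lemma does its real work in this paper for semi-hyperbolicity (Theorem \ref{thmNF=Nf)}), not for essential reducibility. A small further inaccuracy: the averaging formula does not assign ``the single determinant $\det(I-B_\ast D_\ast)$'' to each Reidemeister class with holonomy $B$ --- the count is divided by $\#F$ and shared among classes in the same $\sim_f$-equivalence class (cf.\ Theorem \ref{thmNAf}); what you actually need, and should invoke directly, is only Remark \ref{remark essential of inessential}.
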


A nice consequence of being essentially reducible, is the following lemma.

\begin{lemma}[\cite{jm06-1}, 5.1.22]\label{lemNPNF}
If $f:X\to X$ is essentially reducible, then it has a unique minimal preceding system, namely the set of all the essential irreducible orbits in $\bigcup_{k|n}\mathcal{OR}(f^k)$. As a consequence, the following equality holds:$$\sum_{k|n}NP_k(f)=NF_n(f).$$
\end{lemma}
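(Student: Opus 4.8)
The plan is to show that under the hypothesis of essential reducibility, the set
\[
\mathcal{PS}_0 = \bigl\{\, \mathcal{A} \in \textstyle\bigcup_{k\mid n}\mathcal{OR}(f^k) : \mathcal{A} \text{ is essential and irreducible} \,\bigr\}
\]
is a preceding system, that it is the \emph{unique} minimal one, and finally deduce the numerical identity. First I would check that $\mathcal{PS}_0$ is a preceding system at all: given an essential orbit $\mathcal{B} \in \mathcal{OR}(f^m)$ for some $m \mid n$, either $\mathcal{B}$ is irreducible — in which case $\mathcal{B} \in \mathcal{PS}_0$ precedes itself — or $\mathcal{B}$ reduces to some orbit $\mathcal{B}'$ at a proper divisor level. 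Here is where essential reducibility enters: since $\mathcal{B}$ is essential, $\mathcal{B}'$ is essential as well. Iterating and using that the divisor lattice of $n$ is finite, this descent terminates at an essential irreducible orbit $\mathcal{A}$, which lies in $\mathcal{PS}_0$ and precedes $\mathcal{B}$ via the composite of the relevant boosting functions $\gamma_{\,\cdot\,\cdot}$ (recall $\gamma_{nm}\gamma_{mk} = \gamma_{nk}$). So every essential orbit in $\bigcup_{k\mid n}\mathcal{OR}(f^k)$ is preceded by an element of $\mathcal{PS}_0$.

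Next I would argue minimality and uniqueness simultaneously. The key observation is that \emph{every} preceding system must contain $\mathcal{PS}_0$: an essential irreducible orbit $\mathcal{A}$ has no pre-image under any boosting function $\gamma_{nk}$ with $k$ a proper divisor of the level of $\mathcal{A}$, so the only element that can precede $\mathcal{A}$ is $\mathcal{A}$ itself; hence $\mathcal{A}$ must belong to any preceding system. Thus for any preceding system $\mathcal{PS}$ we have $\mathcal{PS} \supseteq \mathcal{PS}_0$, and therefore
\[
\sum_{\mathcal{A}\in\mathcal{PS}} d(\mathcal{A}) \;\ge\; \sum_{\mathcal{A}\in\mathcal{PS}_0} d(\mathcal{A}),
\]
since depths are positive integers. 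This shows $\mathcal{PS}_0$ attains the minimum, so it is a minimal preceding system; and any other minimal preceding system $\mathcal{PS}$ satisfies $\mathcal{PS}\supseteq\mathcal{PS}_0$ with equal sum of depths, forcing $\mathcal{PS} = \mathcal{PS}_0$ by positivity of the depths. Hence the minimal preceding system is unique.

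Finally, the numerical identity follows by bookkeeping the depths. Each essential irreducible orbit $\mathcal{A} \in \mathcal{OR}(f^k)$ has depth exactly $k$ (it is not in the image of any $\gamma_{kj}$ for a proper divisor $j$ of $k$, and trivially $\mathcal{A}\in\mathrm{Im}(\gamma_{kk})$), so grouping the sum $\sum_{\mathcal{A}\in\mathcal{PS}_0} d(\mathcal{A})$ by the level $k$ gives $\sum_{k\mid n} k \times (\text{number of essential irreducible orbits in }\mathcal{OR}(f^k)) = \sum_{k\mid n} NP_k(f)$, which by definition equals $NF_n(f)$. I expect the main (and only mild) obstacle to be the termination-of-descent argument in the first paragraph: one must be careful that the reduction relation strictly decreases the level, so that essential reducibility plus finiteness of the divisor poset genuinely yields an essential irreducible orbit at the bottom, and that the composed boosting map indeed witnesses the precedence.
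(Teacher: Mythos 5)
Your proof is correct: showing that the set of essential irreducible orbits is itself a preceding system (via the essentially-reducible descent through the finite divisor lattice), that every preceding system must contain it (since an irreducible orbit can only be preceded by itself), and then grouping the depth sum by level to recover $\sum_{k\mid n}NP_k(f)$ is the standard and expected argument. The paper itself does not supply a proof but cites \cite{jm06-1}, Lemma~5.1.22; your reasoning matches the argument given there.
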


Of course, by using the M\"{o}bius inversion formula, we can also write$$NP_n(f)=\sum_{k|n}\mu\left(\frac{n}{k}\right)NF_k(f),$$where $\mu$ denotes the M\"{o}bius function.

\medskip

As a generalization of being essentially reducible, we can define two other structures on the boosting and reducing relations.

\begin{definition}
A map $f:X\to X$ is called \textbf{essentially reducible to the greatest common divisor} (GCD) if it is essentially reducible and if for every essential fixed point class $[\alpha]_n$ that reduces to both $[\beta]_k$ and $[\gamma]_l$, there exists a fixed point class $[\delta]_d$, with $d=\gcd(k,l)$, such that $[\alpha]_n$ reduces to $[\delta]_d$. If this holds for every self-map on $X$, we will say that $X$ is essentially reducible to the GCD.
\end{definition}

An easy consequence of this definition is the following lemma.

\begin{lemma}[\cite{jm06-1}, 5.1.26]\label{lemessredgcd}
If $f:X\to X$ is essentially reducible to the GCD, then every essential fixed point class $[\alpha]_n$ in $\mathcal{R}(f^n)$ is preceded by a unique irreducible essential fixed point class $[\beta]_k$. Moreover, $d([\alpha]_n)=k$.
\end{lemma}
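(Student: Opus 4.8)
The plan is to use the two structural hypotheses packaged in ``essentially reducible to the GCD'' to show both the existence and the uniqueness of the claimed irreducible essential predecessor, and then to identify its level with the depth.

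\textbf{Existence.} First I would fix an essential fixed point class $[\alpha]_n$ and let $k$ be its depth $d([\alpha]_n)$, i.e.\ the least divisor of $n$ for which $[\alpha]_n \in \textrm{Im}(\gamma_{nk})$. By definition of depth there is a fixed point class $[\beta]_k$ with $\gamma_{nk}([\beta]_k) = [\alpha]_n$. Since $f$ is essentially reducible and $[\alpha]_n$ is essential, $[\beta]_k$ is essential. It remains to see that $[\beta]_k$ is irreducible: if $[\beta]_k$ reduced to some $[\beta']_{k'}$ with $k' \mid k$ and $k' < k$, then composing boosting functions ($\gamma_{nk}\gamma_{kk'} = \gamma_{nk'}$) would put $[\alpha]_n$ in the image of $\gamma_{nk'}$, contradicting minimality of $k$. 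So $[\beta]_k$ is an irreducible essential class preceding $[\alpha]_n$, and automatically $d([\alpha]_n) = k$ equals its level — this last point is immediate once uniqueness is established, since any irreducible essential predecessor sits at a level that is a divisor of $n$ witnessing membership in the corresponding image, hence is $\geq k$ by minimality, while the one just constructed is at level exactly $k$.

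\textbf{Uniqueness.} Suppose $[\alpha]_n$ is preceded by two irreducible essential classes $[\beta]_k$ and $[\gamma]_l$. The GCD property gives a class $[\delta]_d$ with $d = \gcd(k,l)$ such that $[\alpha]_n$ reduces to $[\delta]_d$; essential reducibility makes $[\delta]_d$ essential. Now $[\beta]_k$ and $[\gamma]_l$ both lie over $[\delta]_d$ along the appropriate boosting functions provided we know that boosting is, in the relevant sense, ``cofinal'' — i.e.\ that since $d \mid k$ and $[\delta]_d$, $[\beta]_k$ both precede $[\alpha]_n$ and the preceding relation through a common top class forces $[\beta]_k$ to reduce to $[\delta]_d$ (here I would invoke essential reducibility to the GCD again, or the underlying fact that for essentially reducible maps the predecessors of a fixed essential class at divisor levels form a chain-like structure: if $[\delta]_d$ and $[\beta]_k$ both reduce from $[\alpha]_n$ with $d \mid k$, then $[\beta]_k$ reduces to $[\delta]_d$, because $[\beta]_k = \gamma_{nk}^{-1}$-preimage forces $[\delta]_d$ to lie below it). But $[\beta]_k$ is irreducible, so the only way it can reduce to $[\delta]_d$ is $d = k$; symmetrically $d = l$; hence $k = l = d$. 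Finally $[\beta]_d$ and $[\gamma]_d$ are both preimages of $[\alpha]_n$ under the single function $\gamma_{nd}$, and they both reduce to the common class $[\delta]_d$ at their own level, which means $[\beta]_d = [\delta]_d = [\gamma]_d$. Thus the irreducible essential predecessor is unique.

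\textbf{Main obstacle.} The delicate point is the ``chain'' step in the uniqueness argument: turning ``$[\alpha]_n$ reduces to $[\delta]_d$ and to $[\beta]_k$ with $d \mid k$'' into ``$[\beta]_k$ reduces to $[\delta]_d$.'' This is not purely formal from the definition of boosting functions alone — it is exactly the content one needs essential reducibility (plus the GCD hypothesis) for, and care is required because an essential class of $f^n$ could a priori reduce to an inessential one at an intermediate level. I expect the cleanest route is to phrase everything in terms of the boosting functions $\gamma_{\bullet\bullet}$, use $\gamma_{nk}\gamma_{kd} = \gamma_{nd}$ together with the fact (from essential reducibility) that the fibre of $\gamma_{nk}$ over an essential class consists of essential classes, and then argue that the image of $[\beta]_k$ under $\gamma_{kd}$ must be $[\delta]_d$ by uniqueness of the depth-$d$ predecessor of $[\alpha]_n$ — which is where the GCD hypothesis is genuinely used rather than mere essential reducibility. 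Everything else is bookkeeping with divisors and the functoriality $\gamma_{nm}\gamma_{mk} = \gamma_{nk}$.
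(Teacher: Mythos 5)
The paper itself does not prove this lemma; it cites it directly from \cite{jm06-1}, so there is no in-paper proof to compare against. Your existence argument is correct and complete: taking $k = d([\alpha]_n)$, essential reducibility gives an essential predecessor $[\beta]_k$, irreducibility of $[\beta]_k$ follows from minimality of $k$ via $\gamma_{nk}\gamma_{kk'} = \gamma_{nk'}$, and the depth identification comes along for free.

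The obstacle you flag in the uniqueness step is real, and it stems from the way the paper words the definition of ``essentially reducible to the GCD.'' As stated, the definition only guarantees a class $[\delta]_d$ with $d = \gcd(k,l)$ to which $[\alpha]_n$ reduces; it says nothing about $[\beta]_k$ or $[\gamma]_l$ reducing to $[\delta]_d$, which is exactly the ``chain'' property you need and cannot derive purely formally (the alternative route you sketch, via uniqueness of the depth-$d$ predecessor, is circular, and ``the image of $[\beta]_k$ under $\gamma_{kd}$'' is not well-typed since $\gamma_{kd}$ maps level $d$ to level $k$). The resolution is that the intended definition --- the one in \cite{jm06-1}, and the one the paper itself actually invokes in the proof of Theorem \ref{thmessredgcd}, where it says ``$p'(\Fix(\beta_r\tilde{g}^r))$ and $p'(\Fix(\beta_s\tilde{g}^s))$ \emph{both reduce to} $p'(\Fix(\beta_d\tilde{g}^d))$'' --- is the stronger one: $[\delta]_d$ is a common predecessor of $[\beta]_k$ and $[\gamma]_l$. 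With that version, uniqueness collapses to a few lines: irreducibility of $[\beta]_k$ together with $d \mid k$ forces $d = k$, symmetrically $d = l$, and since $\gamma_{dd} = \Id_{\mathcal{R}(f^d)}$, reducing to a class at one's own level means equality, so $[\beta]_k = [\delta]_d = [\gamma]_l$. Your step $4$ is exactly this; you simply need to replace the weaker hypothesis by the stronger one rather than try to deduce the chain property from it.
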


By the length $l([\alpha]_n)$, we mean the minimal number $l|n$, such that $\mathcal{R}_f^l([\alpha]_n)=[\alpha]_n$. Alternatively, this is the number of fixed point classes in an orbit $\mathcal{A}\in \mathcal{OR}(f^n)$.

\medskip

It is immediately clear that $d([\alpha])\geq l([\alpha])$, because every class in an orbit that reduces to depth $d$ will be a fixed point of the map $\mathcal{R}_f^d$.

\begin{definition}
A map $f:X\to X$ is called \textbf{essentially toral} if it is essentially reducible and if the following two conditions are fulfilled:\begin{enumerate}
\item For every essential fixed point class in $\mathcal{R}(f^n)$, the length and depth coincide.
\item If $[\alpha]_n$ is essential and $\gamma_{nk}([\beta]_k)=\gamma_{nk}([\gamma]_k)=[\alpha]_n$, then $[\beta]_k=[\gamma]_k$.
\end{enumerate}
If this holds for every self-map on $X$, we will say that $X$ is essentially toral.
\end{definition}

Because lengths and depths coincide, the following lemma follows easily.
\begin{lemma}[\cite{jm06-1}, 5.1.30]\label{lemclasses}
If $f:X\to X$ is essentially toral, then $NP_n(f)$ equals the number of irreducible essential fixed point classes in $\mathcal{R}(f^n)$.
\end{lemma}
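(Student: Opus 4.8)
The plan is to reduce the lemma to two structural facts and only feed in the ``essentially toral'' hypothesis at the very end. Recall that by definition $NP_n(f) = n\cdot r$, where $r$ is the number of irreducible essential orbits in $\mathcal{OR}(f^n)$. Hence it suffices to establish: (i) the irreducible essential fixed point classes in $\mathcal{R}(f^n)$ are exactly the classes lying in an irreducible essential orbit; and (ii) every irreducible essential orbit consists of exactly $n$ classes. Granting (i) and (ii), the irreducible essential classes are partitioned into $r$ orbits of size $n$, so their number is $n\cdot r = NP_n(f)$.

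For (i), the first step is to record that the boosting functions are $\mathcal{R}_f$-equivariant: from the formula $\gamma_{nk}([\alpha]) = [\alpha\, f_\ast^k(\alpha)\cdots f_\ast^{n-k}(\alpha)]$ one checks by a direct computation that $\gamma_{nk}\circ\mathcal{R}_f = \mathcal{R}_f\circ\gamma_{nk}$ on $\mathcal{R}(f^k)$, the chosen representatives even agreeing on the nose. Using this, and the finite order of $\mathcal{R}_f$, one sees that for a proper divisor $k$ of $n$ a class $[\alpha]_n$ lies in $\mathrm{Im}(\gamma_{nk})$ if and only if its whole $\mathcal{R}_f$-orbit does (apply $\mathcal{R}_f^j$ and commute it past $\gamma_{nk}$); hence a class is irreducible precisely when its orbit is irreducible. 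Since $\mathcal{R}_f$ preserves the fixed point index, all classes in a given orbit are simultaneously essential or inessential, so a class is essential precisely when its orbit is essential. Combining the two equivalences yields (i).

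For (ii), let $\mathcal{A}$ be an irreducible essential orbit at level $n$. Irreducibility of $\mathcal{A}$ means $d(\mathcal{A}) = n$, and by (i) every class $[\alpha]_n\in\mathcal{A}$ is itself irreducible, so $d([\alpha]_n) = n$ as well. Now invoke the hypothesis: since $f$ is essentially toral, the length and depth of an essential class coincide, so $l([\alpha]_n) = n$. But $l([\alpha]_n)$ is by definition the number of classes in the orbit $\mathcal{A}$, so $\mathcal{A}$ has exactly $n$ classes. This proves (ii), and with it the lemma. (Essentially, only the ``length equals depth'' clause of the definition of essentially toral is used here; the second clause is needed elsewhere.)

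The step I expect to be the real obstacle is (i), namely the bookkeeping needed to show that irreducibility transfers faithfully between the class level and the orbit level --- that a class sitting inside an irreducible orbit cannot itself be a boost of something from a lower level, and conversely. This is exactly where the $\mathcal{R}_f$-equivariance of the boosting functions does the work, and stating that equivariance and its consequences cleanly is the crux of the argument.
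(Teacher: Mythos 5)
Your proof is correct and follows the route the paper itself gestures at: the paper does not prove this lemma but cites \cite{jm06-1} (5.1.30) and remarks only that it ``follows easily'' because lengths and depths coincide, and your argument is a careful expansion of exactly that idea. In particular, your step (i) (transferring ``irreducible essential'' between classes and orbits via the $\mathcal{R}_f$-equivariance of $\gamma_{nk}$ and index-invariance under $\mathcal{R}_f$) needs no hypothesis, while (ii) is the single place the essentially toral assumption (length $=$ depth) is invoked, which matches the paper's framing.
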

This lemma actually tells us that if we are working on an essentially toral space, we are free to replace the orbit theory by a theory in terms of classes. By combining Lemma \ref{lemNPNF} and Lemma \ref{lemclasses}, the following can  also be easily deduced.

\begin{corollary}\label{coresstor}
If $f:X\to X$ is essentially toral, then $NF_n(f)$ equals the number of irreducible essential fixed point classes in $\bigcup_{k|n}\mathcal{R}(f^k)$.
\end{corollary}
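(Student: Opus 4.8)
The plan is to obtain Corollary~\ref{coresstor} by stringing together Lemma~\ref{lemNPNF} and Lemma~\ref{lemclasses}, so that essentially no new work is needed beyond careful bookkeeping. First I would note that an essentially toral map is, by definition, essentially reducible, so Lemma~\ref{lemNPNF} applies and yields the identity
\[
NF_n(f)=\sum_{k\mid n}NP_k(f).
\]
This reduces the problem to re-interpreting the prime numbers $NP_k(f)$ in terms of fixed point classes instead of orbits of classes.

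That re-interpretation is exactly Lemma~\ref{lemclasses}: since $f$ is essentially toral, for every level $k$ (in particular for every $k\mid n$) the number $NP_k(f)$ equals the number of irreducible essential fixed point classes in $\mathcal{R}(f^k)$. Substituting this into the sum above gives
\[
NF_n(f)=\sum_{k\mid n}\bigl(\text{number of irreducible essential classes in }\mathcal{R}(f^k)\bigr),
\]
and it remains only to recognise the right-hand side as the number of irreducible essential classes in $\bigcup_{k\mid n}\mathcal{R}(f^k)$. Here one uses the convention, fixed at the start of the section, that a class is always recorded together with its level (the subscript in $[\alpha]_k$); with this convention the Reidemeister sets $\mathcal{R}(f^k)$ for the various divisors of $n$ are treated as disjoint, so the contributions add without overlap.

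The only delicate point --- and the nearest thing to an obstacle --- is to make sure this level-labelled union neither drops nor double-counts anything. Dropping is not an issue because ``irreducible'' and ``essential'' are both intrinsic to the level at which a class lives (irreducibility of $[\alpha]_k$ only refers to the boosting maps $\gamma_{km}$ with $m\mid k$, and essentiality is preserved along each orbit). Double-counting is excluded because an irreducible essential class $[\beta]_k$ cannot reappear as an irreducible class at another level: when $k\mid l\mid n$ its boost $\gamma_{lk}([\beta]_k)$ lies in $\textrm{Im}(\gamma_{lk})$ and is therefore reducible, and when $k$ and $l$ are incomparable the two classes simply live in different Reidemeister sets. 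For extra consistency one may also verify that the class count of Lemma~\ref{lemclasses} matches the orbit count hidden in $NP_k(f)$: an irreducible essential orbit in $\mathcal{OR}(f^k)$ has depth $k$, hence length $k$ by essential torality, hence consists of exactly $k$ classes, which is precisely why $NP_k(f)$ can be read either way. Assembling these remarks delivers the stated equality.
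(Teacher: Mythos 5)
Your proof is correct and follows the same route the paper intends: combine Lemma~\ref{lemNPNF} (using that essential torality implies essential reducibility) with Lemma~\ref{lemclasses}, then sum over divisors. The extra bookkeeping about level-labelling and the depth-equals-length consistency check are exactly the ``easily deduced'' details the paper leaves implicit.
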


In \cite{hk97-1}, the following theorem is proved.

\begin{theorem}
Nilmanifolds are essentially reducible to the GCD and essentially toral.
\end{theorem}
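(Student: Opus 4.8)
The plan is to mimic the Lee--Lee computation and reduce the whole boosting/reducing combinatorics to linear algebra with the differential of an affine homotopy lift. All of the data involved---the Reidemeister sets $\mathcal R(f^k)$, the boosting functions $\gamma_{nk}$, the maps $\mathcal R_f$, and the notions of essentiality, length and depth---depend only on the homomorphism induced by $f$ on $\pi_1$, hence are homotopy invariant, so by Theorem~\ref{leemaps} we may assume $f$ is an affine map with homotopy lift $(d,D)$ and differential $D_\ast\in\Endo(\lie)$. On a nilmanifold the holonomy is trivial, so Theorem~\ref{LeeForm} gives $L(f^n)=\det(I-D_\ast^n)$; by Theorems~\ref{thmRInf} and~\ref{thmN=R}, $\mathcal R(f^n)$ is finite precisely when $\det(I-D_\ast^n)\neq 0$, in which case \emph{every} Reidemeister class of $f^n$ is essential and $\#\mathcal R(f^n)=|\det(I-D_\ast^n)|$, while when $\det(I-D_\ast^n)=0$ no class of $f^n$ is essential. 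Since, for $j\mid m$, the integer $\det(I-D_\ast^j)$ divides $\det(I-D_\ast^m)$, an essential class $[\alpha]_n$ forces $\det(I-D_\ast^k)\neq 0$ for every $k\mid n$.

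The next step is to make the boosting maps explicit. Filtering the lattice $\Gamma$ by the terms of the lower central series of $G$ and using Malcev (logarithmic) coordinates on the abelian quotients, one identifies $\mathcal R(f^n)$ with the cokernel of $I-D_\ast^n$ on a fixed lattice $\Lambda\cong\Z^m$, so that $\mathcal R_f$ becomes multiplication by $D_\ast$ and, for $k\mid n$, the boosting map $\gamma_{nk}$ becomes multiplication by $P_k(D_\ast):=I+D_\ast^k+\dots+D_\ast^{n-k}$; the governing identity is $P_k(x)(x^k-1)=x^n-1$. Two facts about the family $\{P_j\}$ then do the work. First, if $\det(I-D_\ast^d)\neq 0$ then $\gamma_{nd}$ is \emph{injective}: from $P_d(D_\ast)(I-D_\ast^d)=I-D_\ast^n$ we get $\det P_d(D_\ast)=\det(I-D_\ast^n)/\det(I-D_\ast^d)\neq 0$, so $P_d(D_\ast)v\in(I-D_\ast^n)\Lambda$ forces $v\in(I-D_\ast^d)\Lambda$. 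Second, factoring $x^j-1=\prod_{e\mid j}\Phi_e(x)$ into cyclotomics one has $P_k=\prod_{e\mid n,\,e\nmid k}\Phi_e$, and because ``$e\nmid k$ or $e\nmid l$'' is equivalent to ``$e\nmid\gcd(k,l)$'', the least common multiple of $P_k$ and $P_l$ in $\Q[x]$ equals $P_{\gcd(k,l)}$; promoting this from $\Q$ to $\Z$---via a Euclidean-algorithm identity writing $x^{\gcd(k,l)}-1$ as a $\Z[x]$-combination of $x^k-1$ and $x^l-1$, together with an index count using $|\Lambda:P_j(D_\ast)\Lambda|=|\det(I-D_\ast^n)|/|\det(I-D_\ast^j)|$---gives $\textrm{Im}(\gamma_{nk})\cap\textrm{Im}(\gamma_{nl})\subseteq\textrm{Im}(\gamma_{nd})$ for $d=\gcd(k,l)$.

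Granting this model, the three conclusions are short. \emph{Essential reducibility}: if $[\alpha]_{kn}$ is essential then $\det(I-D_\ast^{kn})\neq 0$, hence $\det(I-D_\ast^k)\neq 0$, so every class of $f^k$ is essential---this is also Theorem~\ref{thmleezhao}. \emph{Essential reducibility to the GCD}: if an essential $[\alpha]_n$ reduces to $[\beta]_k$ and to $[\gamma]_l$, then the second fact gives $[\alpha]_n\in\textrm{Im}(\gamma_{nk})\cap\textrm{Im}(\gamma_{nl})\subseteq\textrm{Im}(\gamma_{nd})$ with $d=\gcd(k,l)$, so $[\alpha]_n$ reduces to some $[\delta]_d$. \emph{Essentially toral}: condition~(2) is exactly injectivity of $\gamma_{nk}$ on essential classes, which is the first fact since $\det(I-D_\ast^k)\neq 0$; for condition~(1), $l([\alpha]_n)\geq d([\alpha]_n)$ always, and conversely, if $l=l([\alpha]_n)$ then $(I-D_\ast^l)\alpha\in(I-D_\ast^n)\Lambda=P_l(D_\ast)(I-D_\ast^l)\Lambda$, so (as $\det(I-D_\ast^l)\neq 0$) cancelling $I-D_\ast^l$ yields $\alpha\in P_l(D_\ast)\Lambda$, i.e.\ $[\alpha]_n\in\textrm{Im}(\gamma_{nl})$, whence $d([\alpha]_n)\leq l$.

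The main obstacle is the passage carried out in the second paragraph: identifying $\mathcal R(f^n)$ \emph{together with all the boosting maps and the $\mathcal R_f$-action} with the integral cokernel model requires running the lower-central-series induction on a genuinely non-abelian nilmanifold and checking at each stage that essentiality and the boosting/reducing relations are compatible with the filtration; and the integral promotion of the cyclotomic identity needs care, since the lcm relation in $\Q[x]$ need not survive reduction modulo primes, so the clean statement about $\textrm{Im}(\gamma_{nk})\cap\textrm{Im}(\gamma_{nl})$ only follows after a separate order count in the finite abelian groups $\Lambda/(I-D_\ast^n)\Lambda$. Once this bookkeeping is set up, everything else is the elementary arithmetic of the polynomials $P_k$.
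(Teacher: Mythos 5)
This theorem is not proved in the paper at hand---the text cites it as a result of Heath and Keppelmann from \cite{hk97-1} (who in fact prove it for all solvmanifolds). So there is no ``paper proof'' to compare against; what I can do is assess the proposal on its own terms.

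Your cokernel model is correct and complete \emph{for tori}: there $\Gamma\cong\Z^m$ is abelian, $\mathcal{R}(f^n)=\Lambda/(I-D_\ast^n)\Lambda$, $\mathcal{R}_f$ is multiplication by $D_\ast$, and $\gamma_{nk}$ is multiplication by $P_k(D_\ast)=(I-D_\ast^n)(I-D_\ast^k)^{-1}$. The injectivity argument and the length/depth argument both go through cleanly in that setting. (Two small points: the ``always'' inequality is $d([\alpha])\geq l([\alpha])$, not the other way around, so what you actually need to prove is $d\leq l$, which is what your computation establishes; and the integral $\gcd$ step is more cleanly handled by noting that $\mathrm{Im}(\gamma_{nk})=\ker\bigl(I-D_\ast^k\mid_{\Lambda/(I-D_\ast^n)\Lambda}\bigr)$, whence an element killed by both $I-D_\ast^k$ and $I-D_\ast^l$ is, since $D_\ast$ acts with order dividing $n$, killed by $I-D_\ast^{\gcd(k,l)}$ via a B\'ezout identity $d=ak+bl$---this sidesteps the cyclotomic/lcm bookkeeping and the $\Q[x]$-vs.-$\Z[x]$ worry entirely.)

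The genuine gap is exactly the one you flag yourself and then set aside: for a nilmanifold that is not a torus, $\Gamma$ is a non-abelian finitely generated torsion-free nilpotent group, and the Reidemeister set $\mathcal{R}(f_\ast^n)=\Gamma/\!\sim$ (twisted conjugacy) is \emph{not} a quotient group, let alone the cokernel of a linear map. The passage to a linear model---compatibly with all the boosting maps $\gamma_{nk}$, the action $\mathcal{R}_f$, and the essentiality indices at every stage---is precisely what Heath and Keppelmann prove by a careful induction along the tower of torus fibrations coming from the lower central series; it is not a formal consequence of Theorem~\ref{leemaps} plus homotopy invariance, and it is the content of the theorem, not a preliminary reduction. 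As written, the proposal proves the torus case and asserts the nilmanifold case. To close the argument you would need to actually run the fibration induction (e.g.\ the Fadell--Husseini fiber space theory for the Mostow fibration), verifying at each stage that essential classes in the total space restrict to and lift from essential classes compatibly with $\gamma_{nk}$; once that scaffolding is in place, your linear-algebra endgame then applies on each abelian quotient and the theorem follows.
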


Note that they actually proved a more general version of this theorem, as they showed that the theorem above also holds for solvmanifolds.

\begin{definition}
A map $f:X\to X$ is called \textbf{weakly Jiang} if $N(f)=0$ or $N(f)=R(f)$. This means that all fixed point classes are simultaneously essential or inessential.
\end{definition}

\begin{theorem}[\cite{hk97-1}, Theorem 5.1]\label{thmhk}
Suppose that $X$ is essentially toral and essentially irreducible to the GCD. If $f:X\to X$ is a map such that $f^n$ is weakly Jiang and $N(f^n)\neq 0$, then $$NF_n(f)=N(f^n)$$and the same formula holds for every divisor of $n$.
\end{theorem}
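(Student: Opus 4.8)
The plan is to show that under the stated hypotheses, every essential orbit in $\bigcup_{k|n}\mathcal{OR}(f^k)$ is in fact irreducible, so that counting them gives both $NF_n(f)$ and $\sum_{k|n} NP_k(f)$, and then to observe that this count is exactly $\sum_{k|n}N(f^k)$ rewritten via Möbius inversion as $N(f^n)$. The key point that makes everything collapse is the \emph{weakly Jiang} hypothesis: since $f^n$ is weakly Jiang and $N(f^n)\neq 0$, we have $N(f^n)=R(f^n)$, so \emph{every} Reidemeister class of $f^n$ is essential; and by Theorem~\ref{thmhk}'s phrasing "the same formula holds for every divisor of $n$'', we should first check that $N(f^k)=R(f^k)\neq 0$ for every $k\mid n$ as well (this uses essential reducibility: an inessential class of $f^k$ would boost to an inessential class of $f^n$, contradicting that all classes of $f^n$ are essential; combined with weak Jiangness of $f^k$ one gets $N(f^k)=R(f^k)$, and $N(f^k)\neq 0$ follows because otherwise, again by boosting/essential reducibility run backwards via Theorem~\ref{thmleezhao}, we could not have $N(f^n)\neq 0$).

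Next I would handle irreducibility. Suppose $[\alpha]_k$ is an essential class of $f^k$ that is reducible, say it lies in the image of $\gamma_{k,d}$ for some proper divisor $d\mid k$. Since $f$ is essentially toral, condition (1) gives $l([\alpha]_k)=d([\alpha]_k)=:d<k$. By Lemma~\ref{lemessredgcd} (using essential reducibility to the GCD), $[\alpha]_k$ is preceded by a \emph{unique} irreducible essential class $[\beta]_d$, and the fibre of $\gamma_{k,d}$ over $[\alpha]_k$ is a single point by essential torality condition (2). The strategy now is a counting argument: the total number of essential classes in $\mathcal{R}(f^n)$ is $R(f^n)$, and each such class reduces to a unique irreducible essential class $[\beta]_{k}$ for a unique $k\mid n$; conversely each irreducible essential class $[\beta]_k$ of $f^k$ boosts injectively into $\mathcal{R}(f^n)$. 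So $R(f^n)=\sum_{k\mid n}(\#\text{ irreducible essential classes in }\mathcal{R}(f^k))$. By Lemma~\ref{lemclasses}, the $k$-th summand is $NP_k(f)$, hence $\sum_{k\mid n}NP_k(f)=R(f^n)=N(f^n)$. By Lemma~\ref{lemNPNF} (essential reducibility), the left side equals $NF_n(f)$, giving $NF_n(f)=N(f^n)$.

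Finally, to get "the same formula holds for every divisor of $n$'', I would simply note that all hypotheses (weakly Jiang, nonzero Nielsen number, essentially toral, essentially reducible to the GCD) are inherited by $f^m$ for any $m\mid n$: $(f^m)^j=f^{mj}$ is weakly Jiang whenever $mj\mid n$ because $f^{mj}$ is (as established in the first paragraph), $N(f^{mj})\neq 0$ for the same reason, and essential torality / reducibility to the GCD are properties of the space $X$, hence of $f^m$ as well. Applying the main identity to $f^m$ in place of $f$, with $n/m$ in place of $n$, yields $NF_{n/m}(f^m)=N((f^m)^{n/m})=N(f^n)$, and more relevantly $NF_m(f)=N(f^m)$ by taking the divisor $m$ directly.

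I expect the main obstacle to be the bookkeeping in the second paragraph: making the "each essential class of $f^n$ reduces to a unique irreducible essential class" correspondence into a genuine bijection requires care about \emph{orbits} versus \emph{classes}. Essential torality is exactly the tool that lets us pass freely between the two (Lemma~\ref{lemclasses} and Corollary~\ref{coresstor}), but one must be careful that the depth-to-length equality is used at every level $k\mid n$, not just at level $n$, and that the uniqueness in Lemma~\ref{lemessredgcd} genuinely prevents double-counting across different divisors $k$. A cleaner route, avoiding explicit bijections, is to invoke Corollary~\ref{coresstor} directly: $NF_n(f)$ equals the number of irreducible essential classes in $\bigcup_{k\mid n}\mathcal{R}(f^k)$, and then the only remaining task is the purely numerical identity $\sum_{k\mid n}NP_k(f)=N(f^n)$, which follows from $N(f^k)=R(f^k)=\#\mathcal{R}(f^k)$ for all $k\mid n$ together with the standard orbit-counting relation $\#\mathcal{R}(f^k)=\sum_{d\mid k} d\cdot(\#\text{irreducible essential orbits in }\mathcal{OR}(f^d))=\sum_{d\mid k}NP_d(f)$ and Möbius inversion.
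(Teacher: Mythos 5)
Your core argument (your second paragraph, and again the ``cleaner route'' at the end) is essentially the paper's own proof: since $f^n$ is weakly Jiang with $N(f^n)\neq 0$, every class at level $n$ is essential; by Lemma~\ref{lemessredgcd} each such class reduces to a unique irreducible essential class, and conversely every irreducible essential class in $\bigcup_{k\mid n}\mathcal{R}(f^k)$ boosts essentially because there is nothing inessential at level $n$ to boost to; this gives a bijection, and Corollary~\ref{coresstor} (equivalently, your combination of Lemmas~\ref{lemclasses} and~\ref{lemNPNF}) turns it into $NF_n(f)=R(f^n)=N(f^n)$. The divisor statement follows just as you say, since all classes at level $k\mid n$ are then also essential.

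Your opening paragraph, however, contains two false claims that the rest of your argument quietly abandons, and they should not be left standing. It is \emph{not} true that every essential orbit in $\bigcup_{k\mid n}\mathcal{OR}(f^k)$ is irreducible: for an expanding map of the circle such as $z\mapsto z^2$, the unique essential class of $f$ boosts to an essential, hence reducible, class of $f^2$, so reducible essential orbits are entirely generic here. And ``$\sum_{k\mid n}N(f^k)$ rewritten via M\"obius inversion as $N(f^n)$'' is not an identity of any kind; in fact no M\"obius inversion is needed anywhere in this proof --- the relation $R(f^n)=\sum_{k\mid n}NP_k(f)$ coming from the bijection is applied directly, not inverted. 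Finally, you invoke ``weak Jiangness of $f^k$'' for proper divisors $k$ of $n$, which is not among the hypotheses; fortunately it is also unnecessary, because essential reducibility together with the essentiality of every class of $f^n$ already forces every class of $f^k$ to be essential (each class at level $k$ boosts to some class at level $n$, and essential classes at level $n$ only reduce to essential ones), whence $N(f^k)=R(f^k)\neq 0$ without any additional assumption.
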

The idea behind this proof is very simple. Since every fixed point class at level $n$ is essential, by Lemma \ref{lemessredgcd}, we know that every such class is preceded by a unique irreducible essential class. On the other hand, every irreducible essential fixed point class in $\bigcup_{k|n}\mathcal{R}(f^k)$ has to boost essentially, since there are simply no inessential fixed point classes to boost to at level $n$. Because of these observations, there exist a bijection between the essential fixed point classes at level $n$ and the irreducible essential fixed point classes in $\bigcup_{k|n}\mathcal{R}(f^k)$. Corollary~\ref{coresstor} then proves the theorem.

\medskip

It is known that every map on a nilmanifold is weakly Jiang, due to the result of Anosov (\cite{anos85-1}) or Fadell and Husseini (\cite{fh86-1}). Consequently, Theorem \ref{thmhk} holds for nilmanifolds. Unfortunately, not every map on an infra-nilmanifold is weakly Jiang. Even on the Klein bottle, the smallest example of an infra-nilmanifold which is not a nilmanifold, it is possible to find counterexamples.

\begin{example}\label{Example not weakly Jiang}
Suppose we have the following presentation of the Klein bottle group: $$<\alpha,\beta | \alpha \beta =\beta^{-1} \alpha>.$$Let $k\neq 1$ be odd. Now, let $f_*:\alpha\mapsto \alpha^k, \textrm{ } \beta\mapsto \beta^{-1}$ be the induced morphism for a map $f$ on the Klein bottle. One can check that this morphism indeed induces a map on the Klein bottle, for which it holds that $R(f)=\infty$, while $N(f)\neq 0$.
\end{example}
An algebraic argument for the fact that maps on nilmanifolds are weakly Jiang, while maps on infra-nilmanifolds are generally not, can be found by combining Theorem \ref{LeeForm} with Theorem~\ref{thmRInf} and Theorem \ref{thmN=R}. When working on nilmanifolds, the formula in Theorem \ref{LeeForm} reduces to a single determinant. By Theorem \ref{thmRInf}, we know that this determinant will be equal to $0$ (and hence $N(f)=0$) if and only if $R(f)=\infty$. By combining this fact with Theorem \ref{thmN=R}, it follows that nilmanifolds are weakly Jiang. When working on infra-nilmanifolds, the sum generally consists of multiple determinants. Therefore, it is possible that some of these determinants are $0$ and some are not. If this is the case, a similar argument as before will show that the map is not weakly Jiang, as $R(f)=\infty$, while $N(f)\neq 0$.
\section{Structure on the periodic point classes of infra-nilmanifolds}
In this section, we will show that infra-nilmanifolds are both essentially reducible to the GCD and essentially toral. As a result of these structural properties, we will be able to show a theorem similar to Theorem \ref{thmhk} for semi-hyperbolic maps on infra-nilmanifolds.

\medskip

We will prove both of these structural properties for affine maps on infra-nilmanifolds and because the theory described in the previous section is homotopy-invariant, this will be sufficient. As already mentioned before, affine maps are often much easier to deal with. This fact is exemplified in the following proposition, which can be found in \cite{fl13-1}.

\begin{proposition}\label{propaff}
If $\overline{(d,D)}:M\to M$ is an affine map on an infra-nilmanifold, then every non-empty fixed point class is path-connected and
\begin{enumerate}
\item every essential fixed point class of $\overline{(d,D)}$ consists of exactly one point.
\item every non-essential fixed point class of $\overline{(d,D)}$ is empty or consists of infinitely many points.
\end{enumerate} 
\end{proposition}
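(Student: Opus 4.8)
The plan is to work on the universal cover $G$ and translate the geometry of fixed point classes into statements about the fixed-point set of the affine self-maps $\gamma\circ(d,D)$ of $G$, for $\gamma\in\Gamma$. Passing to differentials, a point $h\in G$ with $h=\exp X$ is fixed by $(a,A_\ast)$ — writing the affine lift in the form $x\mapsto a A_\ast(x)$ on the Lie group, with corresponding map $X\mapsto \log a\cdot A_\ast X$ (additively, after coordinatizing) — precisely when $X$ lies in an affine subspace; more honestly, since $G$ is nilpotent and $\exp$ is a diffeomorphism, the fixed point set of an affine map $x\mapsto aA(x)$ on $G$ is either empty or is a single coset of the closed connected subgroup $\exp(\ker(I-A_\ast))$. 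This is the first key step: the fixed-point set of each lift $\gamma(d,D)$ is a (possibly empty) connected submanifold of $G$, of dimension $\dim\ker(I - B_\ast D_\ast)$ where $B=p(\gamma)\in F$ is the holonomy part. Projecting down, each non-empty fixed point class of $\overline{(d,D)}$ is the continuous image of a connected set, hence path-connected; that disposes of the first assertion of the proposition.

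Next I would compute the index of the class $[\gamma]$ and relate its vanishing to the dimension just discussed. By Theorem \ref{LeeForm} and its proof (or rather the class-level refinement in \cite{ll09-1}), the fixed point class corresponding to $\gamma$ with holonomy part $B$ has index $\pm\det(I-B_\ast D_\ast)$ when that determinant is nonzero, and index $0$ otherwise; and $\det(I-B_\ast D_\ast)=0$ is equivalent to $\ker(I-B_\ast D_\ast)\neq 0$, i.e. to the fixed-coset in $G$ having positive dimension. So: if the class is essential, then $\det(I-B_\ast D_\ast)\neq 0$, the fixed coset in $G$ is a single point, and since distinct lifts in the same class give disjoint fixed-point sets mapping to the same class point, the class downstairs is exactly one point. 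If the class is non-essential, then either it is empty, or $\det(I-B_\ast D_\ast)=0$, the fixed coset $\exp(\ker(I-B_\ast D_\ast))$-translate in $G$ is positive-dimensional, its image in the compact manifold $M$ is a positive-dimensional path-connected set and hence infinite. That gives both items.

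The step I expect to be the main obstacle is making the index computation genuinely \emph{class by class} rather than in aggregate: Theorem \ref{LeeForm} as stated only gives the total Nielsen and Lefschetz numbers $\frac{1}{\#F}\sum_{A\in F}|\det(I-A_\ast D_\ast)|$, and to read off the index of an individual fixed point class one must either invoke the finer statement in \cite{ll09-1} (where the averaging arises exactly because one orbit of the holonomy action on $\Gamma/[\Gamma,\Gamma]$-type data corresponds to one fixed point class, contributing several equal determinants — this is where Lemma \ref{Lemma Bram}(1) is used to see the determinants along such an orbit all agree) or redo a local fixed point index computation for the affine map $aA_\ast$ at an isolated fixed point, where the index is $\mathrm{sign}\det(I-A_\ast D_\ast)$. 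I would handle this by citing the averaged local-index formula from \cite{ll09-1} and noting that a class is essential iff at least one (equivalently, every) representative $B$ in the relevant holonomy orbit has $\det(I-B_\ast D_\ast)\neq0$; the remaining bookkeeping — that positive-dimensional cosets project to infinite sets, and that $\exp$ being a diffeomorphism forces the fixed-point set of an affine map on $G$ to be a connected coset — is routine and I would not belabor it.
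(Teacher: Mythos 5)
The paper does not give a proof of Proposition~\ref{propaff}: it is quoted from \cite{fl13-1}, so there is no ``paper's own proof'' to compare against. Your argument is the natural one and, after two small repairs, is correct.

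The substantive structure is right. For a lift $\gamma\circ(d,D)$ with $\gamma=(b,B)\in\Gamma$, write $E=BD\in\Endo(G)$ and $e=bB(d)$, so the lift acts as $h\mapsto eE(h)$. If $h_0$ is any fixed point, then $h$ is fixed iff $E(h_0^{-1}h)=h_0^{-1}h$, using that $E$ is a group homomorphism; and $\Fix(E)=\exp(\ker(I-E_\ast))$ because $\exp$ is a bijection intertwining $E$ and $E_\ast$, and $\ker(I-E_\ast)$ is a Lie subalgebra. So $\Fix(\gamma\circ(d,D))$ is empty or a left coset of the connected closed subgroup $\exp(\ker(I-B_\ast D_\ast))$, hence path-connected, and its dimension is $\dim\ker(I-B_\ast D_\ast)$. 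Projecting gives path-connectedness of every non-empty class, and a non-empty class is a singleton when $\det(I-B_\ast D_\ast)\ne0$ and a positive-dimensional (hence infinite) connected set when $\det(I-B_\ast D_\ast)=0$, because $p$ is a local homeomorphism.

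Two points to tighten. First, drop the aside about $X\mapsto \log a+A_\ast X$: that picture is only literal when $G$ is abelian (otherwise the translation is a Baker--Campbell--Hausdorff expression), and your ``more honest'' argument via cosets is what actually does the work, so lead with it and keep the notation straight ($(a,A)$ with $A\in\Endo(G)$ acting as $h\mapsto aA(h)$; $A_\ast$ is only its differential on $\lie$). Second, and more importantly, the link between essentiality and the determinant needs to be stated with care in the degenerate direction. From a local index computation at an isolated fixed point you get that $\det(I-B_\ast D_\ast)\ne0$ (plus a fixed point) forces index $\pm1$, hence essential with exactly one point; but when $\det(I-B_\ast D_\ast)=0$ the fixed-point set of the lift is not isolated, so there is no local computation to quote, and you must instead pass to a finite nilmanifold cover $\Lambda\backslash G$ and invoke Anosov / Fadell--Husseini (or equivalently the class-level refinement from \cite{ll09-1}) to conclude the class has index $0$. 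Do not use Remark~\ref{remark essential of inessential} for this, since in the paper that remark is downstream of Theorem~\ref{thmNAf}, whose proof already uses Proposition~\ref{propaff}; cite \cite{ll09-1} or the nilmanifold result directly. With these clarifications the proof stands.
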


Now we can prove the two main theorems of this section.

\begin{theorem}\label{thmessredgcd}
Infra-nilmanifolds are essentially reducible to the GCD.
\end{theorem}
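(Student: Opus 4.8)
The plan is to pass to affine maps (allowed by homotopy-invariance) and then translate the boosting and reducing relations into the determinant language of Theorem~\ref{LeeForm} together with the structural Lemma~\ref{Lemma Bram}. Write $(d,D)$ for an affine homotopy lift of $f$ and $F$ for the holonomy group. The key observation is that fixed point classes of $f^n$ correspond to Reidemeister classes $[\alpha]_n$ with $\alpha\in\Gamma$, and by the formula in Theorem~\ref{LeeForm} (applied to the map with lift $(\alpha,\,\mathrm{id})\circ(d,D)^n$, whose linear part is $A_\ast D_\ast^n$ for the appropriate $A=p(\alpha)$) the class $[\alpha]_n$ is essential iff $\det(I-A_\ast D_\ast^n)\neq 0$. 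Essential reducibility itself we get for free from Theorem~\ref{thmleezhao}, so the real content is the GCD property.

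First I would set up the reduction bookkeeping: if $[\alpha]_n$ reduces to $[\beta]_k$ with $k\mid n$, then $\gamma_{nk}([\beta]_k)=[\alpha]_n$, which on linear parts says that the holonomy element attached to $[\alpha]_n$ is (conjugate to) that attached to $[\beta]_k$, and that $D_\ast^k$ restricted appropriately relates to $D_\ast^n$. Using Lemma~\ref{Lemma Bram}, part (1), the value $\det(I-A_\ast D_\ast^m)$ depends only on the $F$-orbit data and not on which representative in the twisted sequence $(A_j)$ one picks; part (2) gives an $l$ with $(A_jD_\ast)^l=D_\ast^l$, which is exactly what lets one compare iterates at different levels. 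The strategy for the GCD statement is then: suppose the essential class $[\alpha]_n$ reduces to both $[\beta]_k$ and $[\gamma]_l$; I want an essential class $[\delta]_d$ at level $d=\gcd(k,l)$ to which $[\alpha]_n$ reduces. Using B\'ezout, $d = ak+bl$ for integers $a,b$, and one builds the candidate lift at level $d$ by an appropriate word in the twisted iterates of $\beta$ and $\gamma$ under $f_\ast$; one then checks, via Proposition~\ref{propaff} (essential affine fixed point classes are single points, hence the point fixed by $\beta\circ\tilde f^k$ and by $\gamma\circ\tilde f^l$ must be the same point $x_0$, which is therefore fixed by $\delta\circ\tilde f^d$), that this $\delta$ genuinely gives a non-empty, hence essential, class at level $d$, and that $[\alpha]_n$ reduces to it.

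The cleanest route to the last point is geometric rather than purely algebraic: by Proposition~\ref{propaff} an essential fixed point class of an affine map is a single point. So $[\alpha]_n$ essential means $\Fix(\alpha\circ\tilde f^n)=\{\tilde x\}$; since $[\alpha]_n$ reduces to $[\beta]_k$, the point $p(\tilde x)$ lies in $p(\Fix(\beta\circ\tilde f^k))$, and similarly in $p(\Fix(\gamma\circ\tilde f^l))$. Pulling back to a common lift, one finds a single point $\tilde y$ with $\beta\circ\tilde f^k(\tilde y)=\tilde y$ and $\gamma'\circ\tilde f^l(\tilde y)=\tilde y$ for suitable coset representatives $\gamma'$, and then $\tilde y$ is fixed by $\delta\circ\tilde f^d$ where $\delta\circ\tilde f^d$ is the obvious B\'ezout combination of these two; this $\delta$ defines a non-empty fixed point class at level $d$, which is essential by Proposition~\ref{propaff} (only empty classes are inessential singletons—more precisely, non-essential classes are empty or infinite, and this one is non-empty and, being that of an affine map, a single point). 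Finally $\gamma_{nd}([\delta]_d)=[\alpha]_n$ because both sides are essential classes containing $p(\tilde x)$, and essential affine classes are singletons, so they coincide. Uniqueness of the irreducible essential predecessor in Lemma~\ref{lemessredgcd} then follows formally.

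I expect the main obstacle to be the precise matching of holonomy/coset representatives when combining the level-$k$ and level-$l$ data into level-$d$ data: the Reidemeister classes live in $\mathcal R(f^k)$ and $\mathcal R(f^l)$, and the B\'ezout combination must be shown to land in the correct twisted conjugacy class in $\mathcal R(f^d)$, for which Lemma~\ref{Lemma Bram}(2) (giving $(A_jD_\ast)^l=D_\ast^l$) is the needed technical input to control the interaction of holonomy with the iterates. Everything else—essentiality as non-vanishing of a determinant, single-point essential classes, the depth equalling the length of the irreducible predecessor—is then bookkeeping built on Theorems~\ref{LeeForm}, \ref{thmleezhao} and Proposition~\ref{propaff}.
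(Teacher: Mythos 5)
Your approach is genuinely different from the paper's, and once one step is repaired it does give a valid alternative proof. The paper's route is: replace $f$ by its affine counterpart $g$, lift everything along the finite covering $\Lambda\backslash G \to \Gamma\backslash G$ by the fully characteristic nilmanifold cover of \cite{ll06-1}, observe that singleton essential classes and the reduction relations lift to $\Lambda\backslash G$ after adjusting representatives, and then quote the known fact that nilmanifolds are essentially reducible to the GCD and push back down with $\overline p$. You instead work directly on $\Gamma\backslash G$ with the singleton structure from Proposition~\ref{propaff}, which is arguably more self-contained; the references you make to Theorem~\ref{LeeForm} and Lemma~\ref{Lemma Bram} in the first paragraphs are not actually needed for this argument and can be dropped.

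There is, however, a genuine gap in the key step: you propose to fix $\tilde y$ by the ``obvious B\'ezout combination'' of $\beta\circ\tilde f^k$ and $\gamma'\circ\tilde f^l$, i.e.\ $(\beta\circ\tilde f^k)^a(\gamma'\circ\tilde f^l)^b$ with $ak+bl=d$. But $a$ or $b$ is negative whenever neither of $k,l$ divides the other, and the affine homotopy lift $(d,D)$ need not be invertible (the linear part $D$ can be singular for a general self-map), so negative powers of $\tilde f$ do not exist. Lemma~\ref{Lemma Bram}(2) does not rescue this: it gives a relation among positive powers inside the holonomy group, not an inverse for $\tilde f$. The correct replacement is purely combinatorial on orbits downstairs. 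Once you know (via essential reducibility, Theorem~\ref{thmleezhao}, and Proposition~\ref{propaff}) that the level-$k$ and level-$l$ classes are the singleton $\{x\}$ with $x=p(\tilde x)$, you have $g^k(x)=x$ and $g^l(x)=x$; since the set of $m\geq 1$ with $g^m(x)=x$ is exactly the set of multiples of the least such $m$, it follows that $g^{d}(x)=x$ with $d=\gcd(k,l)$. Now pick $\delta\in\Gamma$ with $\delta\circ\tilde g^{d}(\tilde x)=\tilde x$; then $(\delta\circ\tilde g^{d})^{n/d}$ and $\alpha\circ\tilde g^{n}$ are two lifts of $g^n$ fixing $\tilde x$, hence equal by freeness of the $\Gamma$-action, which is precisely $\gamma_{nd}([\delta]_d)=[\alpha]_n$. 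With this substitution your argument goes through and yields the theorem without passing to the nilmanifold cover.
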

\begin{proof}
By Theorem \ref{thmleezhao}, we already know that infra-nilmanifolds are essentially reducible.
\medskip
It is known from \cite{ll06-1} that every almost-Bieberbach group $\Gamma$ has a fully characteristic subgroup $\Lambda$ of finite index, such that $\Lambda\subset G$. Therefore, every infra-nilmanifold of the form $\Gamma\backslash G$ is finitely covered by a nilmanifold $\Lambda\backslash G$, such that every continuous map $f:\Gamma\backslash G\to \Gamma\backslash G$ can be lifted to a map $\overline{f}:\Lambda\backslash G\to \Lambda\backslash G$. All in all, we have the following commuting diagram, where $\beta_n$ is a covering transformation and $\overline{\beta}_n$ is the natural projection of $\beta_n$ into $\Gamma/ \Lambda$.
\begin{displaymath}
    \xymatrix{ G \ar[dd]_p \ar[dr]_{p'} \ar[rrrrr]^{\beta_n \tilde{f}^n}& & & & & G \ar[dd]^{p} \ar[dl]^{p'} \\
    & \Lambda\backslash G \ar[rrr]^{\overline{\beta}_n\overline{f}^n} \ar[dl]_{\overline{p}} & & & \Lambda\backslash G \ar[dr]^{\overline{p}} & \\
               \Gamma\backslash G \ar[rrrrr]^{f^n}& & & &  & \Gamma\backslash G  }.
\end{displaymath} 

Suppose that $g$ is the affine map on $\Gamma\backslash G$ that is induced by an affine homotopy lift $\tilde{g}$ of $f$. Let $p(\Fix(\beta_n\tilde{g}^n))$ be an essential fixed point class on level $n$, such that, for $r,s|n$, this fixed point class reduces to $p(\Fix(\beta_r\tilde{g}^r))$ and $p(\Fix(\beta_s\tilde{g}^s))$.

\medskip

Because of Proposition \ref{propaff}, we know that there exists $x\in \Fix(g^n)$, such that all these fixed point classes are equal to the set $\{x\}$. The fixed point index is a local property and a covering map is a local homeomorphism, hence, the fixed point class $p'(\Fix(\beta_n\tilde{g}^n))$ is also essential. By using Proposition \ref{propaff} again, we know this fixed point class will consist of one point, namely a $\overline{x}\in \overline{p}^{-1}(x)$. By a similar reasoning, there will exist $\gamma_r,\gamma_s \in \Gamma$ and accordingly, $\overline{\gamma_r},\overline{\gamma_s} \in \Gamma /\Lambda$ such that $$p'(\Fix(\beta_r\tilde{g}^r))=\{\overline{\gamma_r}\cdot \overline{ x}\}\textrm{ and }p'(\Fix(\beta_s\tilde{g}^s))=\{\overline{\gamma_s}\cdot \overline{ x}\}.$$An easy calculation then shows that $$p'(\Fix(\gamma_r^{-1}\beta_r g_\ast^r(\gamma_r)\tilde{g}^r))=\{\overline{x}\}\textrm{ and }p'(\Fix(\gamma_s^{-1}\beta_sg_\ast^s(\gamma_s)\tilde{g}^s))=\{\overline{x}\}.$$This actually means that if we choose good representatives in the Reidemeister classes of $[\beta_r]_r$ and $[\beta_s]_s$, $p'(\Fix(\beta_n\tilde{g}^n))$ will reduce to both $p'(\Fix(\beta_r\tilde{g}^r))$ and $p'(\Fix(\beta_s\tilde{g}^s))$ on our nilmanifold $\Lambda\backslash G$. Since nilmanifolds are known to be essentially reducible to the GCD, there exists a $\beta_d$, with $d=\gcd(r,s)$, such that $p'(\Fix(\beta_r\tilde{g}^r))$ and $p'(\Fix(\beta_s\tilde{g}^s))$ both reduce to  $p'(\Fix(\beta_d\tilde{g}^d))$. By applying $\overline{p}$ to this fixed point class, the statement is proved.
\end{proof}

\begin{theorem}\label{thmesstor}
Infra-nilmanifolds are essentially toral.
\end{theorem}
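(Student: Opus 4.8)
The strategy is the same as for Theorem~\ref{thmessredgcd}: since the whole periodic‑point apparatus is homotopy invariant, it suffices to check the two defining conditions of ``essentially toral'' for an affine map $g=\overline{(d,D)}$ on an infra-nilmanifold $M=\Gamma\backslash G$. Essential reducibility is already supplied by Theorem~\ref{thmleezhao}, so only (1) that length and depth agree on essential classes, and (2) injectivity of the boosting maps $\gamma_{nk}$ over essential classes, remain. The decisive input is again Proposition~\ref{propaff}, applied not just to $g$ but to every iterate $g^m=\overline{(d,D)^m}$, which is itself an affine map on $M$; hence every essential fixed point class of every $g^m$ consists of a single point.

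For condition (1), fix an essential class $[\alpha]_n\in\mathcal{R}(g^n)$, realized by a single point $x\in\Fix(g^n)$. First I would record that $\mathcal{R}_g$ carries the $g^n$-class of a point $y\in\Fix(g^n)$ to the $g^n$-class of $g(y)$ --- immediate from $g_\ast(\alpha)\circ\tilde g=\tilde g\circ\alpha$, together with the fact that $\mathcal{R}_g$ preserves index so that the image class is again a singleton. Consequently, for $l\mid n$ one has $\mathcal{R}_g^{l}([\alpha]_n)=[\alpha]_n$ exactly when $g^l(x)=x$, so $l([\alpha]_n)$ is the pure period $m$ of $x$ (which divides $n$). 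Since $x\in\Fix(g^m)$, the point $x$ lies in some fixed point class $[\beta]_m$ of $g^m$, and by the definition of $\gamma_{nm}$ this class boosts to the $g^n$-class containing $x$, namely $[\alpha]_n$. Hence $[\alpha]_n\in\textrm{Im}(\gamma_{nm})$, so $d([\alpha]_n)\le m=l([\alpha]_n)$; combined with the inequality $d([\alpha]_n)\ge l([\alpha]_n)$ noted earlier, this is condition (1).

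For condition (2), suppose $[\alpha]_n$ is essential with $\gamma_{nk}([\beta]_k)=\gamma_{nk}([\gamma]_k)=[\alpha]_n$. Essential reducibility (Theorem~\ref{thmleezhao}) forces both $[\beta]_k$ and $[\gamma]_k$ to be essential, hence by Proposition~\ref{propaff} each is realized by a single point. Since boosting can only enlarge the realization, $p(\Fix(\beta\tilde g^k))\subseteq p(\Fix(\alpha\tilde g^n))=\{x\}$ and likewise for $\gamma$, so $[\beta]_k$ and $[\gamma]_k$ are both realized by the one point $x$. As distinct Reidemeister classes have disjoint geometric fixed point classes, this forces $[\beta]_k=[\gamma]_k$.

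The step that genuinely needs care is the bookkeeping around empty Reidemeister classes: the paper deliberately keeps track of empty classes coming from distinct Reidemeister classes, so a priori many empty $[\beta]_k$ could boost to an essential $[\alpha]_n$, and it is exactly Theorem~\ref{thmleezhao} that rules this out and reduces condition (2) to the one‑point case. Everything else is routine --- that the geometric realization of a Reidemeister class is well defined and compatible with $\gamma_{nk}$ and $\mathcal{R}_g$, that $\mathcal{R}_g$ preserves index, and that iterates of affine maps are affine. An alternative route, parallel to the proof of Theorem~\ref{thmessredgcd}, is to pass to the nilmanifold cover $\Lambda\backslash G$ and invoke that nilmanifolds are essentially toral; there the work shifts to matching the length and depth of a class on $M$ with those of its lift on $\Lambda\backslash G$ along the covering, which is once more controlled by Proposition~\ref{propaff}.
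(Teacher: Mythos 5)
Your proposal is correct and takes essentially the same route as the paper: pass to an affine representative, use Proposition~\ref{propaff} to realize every essential class (at every level) as a single point, identify the action of $\mathcal{R}_g$ on that point with the action of $g$, and exploit this singleton realization to establish both conditions. The only cosmetic differences are that you argue condition (1) directly ($d\le l$) rather than by contradiction as the paper does, and you spell out explicitly the role of essential reducibility and disjointness of classes in condition (2).
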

\begin{proof}
Again, we already know that infra-nilmanifolds are essentially reducible and again, by homotopy-invariance, it suffices to prove this theorem for affine maps $g$.

\medskip

Let $[\alpha]_n$ be an essential fixed point class of $g^n$. Since we already know that $d([\alpha]_n)\geq l([\alpha]_n)$, we only need to prove that the strict inequality is impossible. So, suppose that $d=d([\alpha]_n)> l([\alpha]_n)=l$. Because of Proposition \ref{propaff}, we know that there exists $x\in \Fix(g^n)$ such that $\{x\}$ is the fixed point class associated to $[\alpha]_n$. Furthermore, $\{g(x)\}$ will be the fixed point class associated to $\mathcal{R}_g([\alpha]_n)$. By definition and because there is only one fixed point in each essential fixed point class, $g^l(x)=x$. Therefore, $[\alpha]_n$ reduces to a fixed point class on level $l$, which is a contradiction to the fact that $d>l$. This proves the first condition.

\medskip

If $[\beta]_k$ and $[\gamma]_k$ are both boosted to $[\alpha]_n$, then we know that they are both essential fixed point classes. Hence, they both have the set $\{x\}$ as associated fixed point class, which means that $[\beta]_k=[\gamma]_k$. This proves the second condition of essential torality.
\end{proof}

Now we will use these newly obtained structural properties for infra-nilmanifolds to establish a few results concerning Nielsen periodic points. We start with the following definition.

\begin{definition}
We say that an essential fixed point $[\alpha]_k$ is \textbf{(in)essentially boosted to level $n$}, if $[\alpha]_k$ is boosted to an (in)essential fixed point class $[\beta]_{n}$. 
\end{definition}

Let us denote the set of all irreducible fixed point classes which are inessentially boosted to level $n$ for a continuous self-map $f$ by $IIB_n(f)$. Note that this is a subset of $\bigcup_{k|n}\mathcal{R}(f^k)$, since this set contains all fixed point classes on all levels that will boost to level $n$.

\begin{proposition}\label{propIIB}
Whenever a map $f$ is essentially reducible to the GCD and essentially toral, we have that
$$NF_n(f)=N(f^n)+\#IIB_n(f).$$
\end{proposition}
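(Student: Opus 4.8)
The plan is to count the irreducible essential fixed point classes in $\bigcup_{k|n}\mathcal{R}(f^k)$ — which, by Corollary~\ref{coresstor}, is exactly $NF_n(f)$ — by sorting them according to whether they boost essentially or inessentially to level $n$. Since $f$ is essentially reducible to the GCD and essentially toral, Lemma~\ref{lemessredgcd} tells us that every essential fixed point class $[\alpha]_n$ at level $n$ is preceded by a \emph{unique} irreducible essential fixed point class $[\beta]_k$, and moreover $d([\alpha]_n)=k$. Conversely, every irreducible essential class in $\bigcup_{k|n}\mathcal{R}(f^k)$ boosts to some fixed point class at level $n$ (namely $\gamma_{nk}([\beta]_k)$), which is either essential or inessential. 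So the set of irreducible essential classes in $\bigcup_{k|n}\mathcal{R}(f^k)$ splits as a disjoint union: those that boost essentially, and those that boost inessentially, the latter being precisely $IIB_n(f)$ by definition.

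**Next I would** establish that the map $[\beta]_k \mapsto \gamma_{nk}([\beta]_k)$ gives a bijection between the irreducible essential classes that boost essentially and the essential fixed point classes in $\mathcal{R}(f^n)$. Surjectivity is exactly the content of Lemma~\ref{lemessredgcd}: every essential $[\alpha]_n$ has a preceding irreducible essential class. For injectivity, suppose two irreducible essential classes $[\beta]_k$ and $[\beta']_{k'}$ both boost to the same essential class $[\alpha]_n$. By uniqueness in Lemma~\ref{lemessredgcd}, the irreducible essential class preceding $[\alpha]_n$ is unique, so $[\beta]_k = [\beta']_{k'}$ (here one should note that ``precedes'' for $[\alpha]_n$ means lying in the preimage under a boosting function, which is exactly what ``boosts to'' means; and since the preceding irreducible essential class is unique as an element of $\bigcup_{k|n}\mathcal{R}(f^k)$, this forces equality including the level). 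This bijection gives that the number of irreducible essential classes boosting essentially equals $\#\{\text{essential classes in }\mathcal{R}(f^n)\} = N(f^n)$, using Theorem~\ref{thmN=R} or simply the fact that the Nielsen number counts essential classes.

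**Then** the count is immediate:
\[
NF_n(f) = \#\{\text{irreducible essential classes in }\textstyle\bigcup_{k|n}\mathcal{R}(f^k)\} = N(f^n) + \#IIB_n(f),
\]
where the first equality is Corollary~\ref{coresstor} (valid since $f$ is essentially toral) and the second is the partition just described.

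**The main obstacle** I anticipate is being careful about the bookkeeping of ``precedes'' versus ``boosts to'' and making sure the uniqueness in Lemma~\ref{lemessredgcd} really yields injectivity at the level of pairs $([\beta], k)$, not just of Reidemeister classes modulo level. One must check that a single irreducible essential class does not live at two different levels in a way that produces two distinct preimages — but essential torality (lengths equal depths, plus the injectivity of boosting from a common depth) rules this out, so the argument closes. A secondary subtlety is confirming that $IIB_n(f)$ as defined consists only of \emph{irreducible} essential classes and that every such class which boosts inessentially is counted once; this follows directly from the definition of $IIB_n(f)$ and the fact that, under essential reducibility to the GCD, distinct irreducible essential classes remain distinct in $\bigcup_{k|n}\mathcal{R}(f^k)$.
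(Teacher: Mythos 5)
Your proof is correct and follows essentially the same route as the paper's: interpret $NF_n(f)$ via Corollary~\ref{coresstor} as the count of irreducible essential classes in $\bigcup_{k|n}\mathcal{R}(f^k)$, split this set according to whether each class boosts essentially or inessentially to level $n$, and use Lemma~\ref{lemessredgcd} to put the first group in bijection with the essential classes of $\mathcal{R}(f^n)$. The extra care you take with the injectivity and with ``precedes'' versus ``boosts to'' is sound but merely makes explicit what the paper states more tersely.
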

\begin{proof}
When a map $f$ is essentially toral, we know by Corollary \ref{coresstor} that $NF_n(f)$ equals the number of irreducible essential classes in $\bigcup_{k|n}\mathcal{R}(f^k)$. Now, pick an arbitrary irreducible essential class. We can distinguish two disjoint cases. 

\medskip

On the one hand, suppose this class boosts essentially to level $n$. As $f$ is essentially reducible to the GCD, we can apply Lemma \ref{lemessredgcd} and we know that every essential fixed point class reduces to a unique irreducible essential fixed point class. This means that there is a bijection between the irreducible essential classes that are essentially boosted to level $n$ and the essential fixed point classes of $\mathcal{R}(f^n)$.

\medskip

If, on the other hand, our class boosts inessentially to level $n$, it belongs to $IIB_n(f)$. Since both cases are disjoint, the equality follows.
\end{proof}

It is quite easy to see that this proposition is a generalization of Theorem \ref{thmhk}. In fact the proof is a slightly adapted version where we take inessential boosting into account.

\begin{theorem}\label{thmNF=Nf)}
When $f$ is a semi-hyperbolic map on an infra-nilmanifold, then for all $n>0$ $$NF_n(f)=N(f^n).$$
\end{theorem}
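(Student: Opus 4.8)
The plan is to deduce the theorem from Proposition~\ref{propIIB}. Infra-nilmanifolds are essentially reducible to the GCD (Theorem~\ref{thmessredgcd}) and essentially toral (Theorem~\ref{thmesstor}), so that proposition gives $NF_n(f)=N(f^n)+\#IIB_n(f)$ for every $n>0$, and the claim reduces to showing $IIB_n(f)=\emptyset$. For this it suffices to prove $R(f^n)<\infty$: then Theorem~\ref{thmN=R} yields $N(f^n)=R(f^n)$, i.e. \emph{every} fixed point class of $f^n$ is essential, so there is no inessential class at level $n$ to which an irreducible class on a lower level could be boosted; hence $IIB_n(f)=\emptyset$. (Since $R(f^n)\ge 1$ always, this moreover shows $N(f^n)\ne 0$, so one could instead invoke Theorem~\ref{thmhk} directly once $f^n$ is known to be weakly Jiang.)

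\medskip

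So the substance is the implication ``$f$ semi-hyperbolic $\Rightarrow$ $R(f^n)<\infty$ for all $n$''. First I would reduce to $n=1$. If $(d,D)$ is an affine homotopy lift of $f$, then $(d,D)^n$ is an affine homotopy lift of $f^n$ with linear part $D^n$, so its differential is $(D_\ast)^n$; the eigenvalues of $(D_\ast)^n$ are the $n$-th powers of those of $D_\ast$, and an $n$-th power of a non-root-of-unity is again not a root of unity. Hence $f^n$ is again semi-hyperbolic, and it is enough to show that every semi-hyperbolic map $g$ on an infra-nilmanifold has $R(g)<\infty$.

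\medskip

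For the latter I would use Theorem~\ref{thmRInf}: one must rule out $\det(I-A_\ast D_\ast)=0$ for every $A\in F$. Projecting the defining relation $f_\ast(\gamma)\,(d,D)=(d,D)\,\gamma$ of an affine homotopy lift onto the $\Endo(G)$-factor of $\aff(G)=G\semi\Endo(G)$ shows that for each $A\in F$ there is $B\in F$ with $DA=BD$; passing to differentials, the finite group $F_\ast=\{A_\ast\mid A\in F\}\subset\GL(\lie)$ and the matrix $D_\ast$ satisfy the hypotheses of Lemma~\ref{Lemma Bram}. Now fix $A\in F$ and suppose for contradiction that $\det(I-A_\ast D_\ast)=0$. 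Build the sequence of Lemma~\ref{Lemma Bram} starting from $A_1=A_\ast$: part~(1) gives $\det(I-A_\ast D_\ast)=\det(I-A_jD_\ast)$ for all $j$, and part~(2) produces positive integers $l,j$ with $(A_jD_\ast)^l=D_\ast^l$. Then $\det(I-A_jD_\ast)=0$, so $1$ is an eigenvalue of $A_jD_\ast$, hence $1=1^l$ is an eigenvalue of $(A_jD_\ast)^l=D_\ast^l$, so $D_\ast$ has an $l$-th root of unity among its eigenvalues --- contradicting semi-hyperbolicity. Thus $\det(I-A_\ast D_\ast)\ne 0$ for all $A\in F$, and $R(g)<\infty$ by Theorem~\ref{thmRInf}.

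\medskip

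The one genuinely delicate point is that $D_\ast$ need not commute with the holonomy action of $F$, so one cannot simply raise a product to a power to eliminate the $F$-part; this is exactly the difficulty that Lemma~\ref{Lemma Bram} is built to handle, and once one notices that $F_\ast$ and $D_\ast$ fit its hypotheses everything else is bookkeeping. The remaining steps --- the reduction to $n=1$ using semi-hyperbolicity of $f^n$, and the passage from $R(f^n)<\infty$ to $IIB_n(f)=\emptyset$ via Theorem~\ref{thmN=R} and Proposition~\ref{propIIB} --- are routine.
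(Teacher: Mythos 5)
Your proposal is correct and follows essentially the same route as the paper: reduce via Proposition~\ref{propIIB} to showing $IIB_n(f)=\emptyset$, use Theorems~\ref{thmRInf} and~\ref{thmN=R} to reduce that to non-vanishing of the determinants $\det(I-A_\ast D_\ast^n)$, and derive that non-vanishing from semi-hyperbolicity via Lemma~\ref{Lemma Bram}. The only organizational difference is that you first reduce to $n=1$ by observing that $(d,D)^n$ is an affine homotopy lift of $f^n$ and that $f^n$ is again semi-hyperbolic, whereas the paper applies Lemma~\ref{Lemma Bram} directly with $D_\ast^n$ in place of $D_\ast$ (which is also valid, since the hypothesis $D_\ast^n A_\ast = B_\ast D_\ast^n$ for some $B_\ast\in F_\ast$ follows by iterating $D_\ast A_\ast = B_\ast D_\ast$); your version makes the verification of the lemma's hypotheses slightly more transparent, but the substance is the same.
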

\begin{proof}
Suppose that $(d,D)$ is an affine homotopy lift of $f$. By combining Theorem \ref{thmRInf} and Theorem \ref{thmN=R} we know that every fixed point class on level $n$ is essential if and only if for all $A\in F$ (where $F$ is the holonomy group of our infra-nilmanifold),$$\det(I-A_\ast D_\ast^n)\neq 0.$$By Lemma \ref{Lemma Bram}, we know that there exists $B \in F$ and an integer $l$, such that $$(B_\ast D_\ast^n)^l=D_\ast^{ln} \textrm{ and } \det(I-A_\ast D_\ast^n)=\det(I-B_\ast D_\ast^n).$$Note that $\det(I-B_\ast D_\ast^n)=0$ implies that $B_\ast D_\ast^n$ has an eigenvalue $1$, but this would mean that $D_\ast^{ln}$ had an eigenvalue $1$, which is in contradiction with the fact that $f$ is semi-hyperbolic. Therefore, we know that every fixed point class on level $n$ is essential, which implies that $IIB_n(f)$ is the empty set. The theorem then follows from Proposition \ref{propIIB}.
\end{proof}

Note that the proof of this theorem actually also proves the following proposition, since we proved that every fixed point class on every level is essential.

\begin{proposition}
When $f$ is a semi-hyperbolic map on an infra-nilmanifold, then for all $n>0$, $f^n$ is a weakly Jiang map.
\end{proposition}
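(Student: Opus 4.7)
The plan is to leverage the computation already carried out inside the proof of Theorem \ref{thmNF=Nf)}, which showed that when $f$ is semi-hyperbolic every fixed point class of $f^n$ is essential. By Theorem \ref{thmRInf} and Theorem \ref{thmN=R}, the weakly Jiang condition $N(f^n)=R(f^n)$ follows as soon as we know that $R(f^n)<\infty$, i.e.\ as soon as $\det(I-A_\ast D_\ast^n)\neq 0$ for every $A\in F$, where $(d,D)$ is an affine homotopy lift of $f$ and $F$ is the holonomy group of the ambient infra-nilmanifold.

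Concretely, I would fix $n>0$, note that $(d,D)^n$ is an affine homotopy lift of $f^n$ whose linear part has differential $D_\ast^n$, and check that the hypothesis of Lemma \ref{Lemma Bram} remains satisfied by $D_\ast^n$: the standard intertwining relation $D_\ast A_\ast = B_\ast D_\ast$ (with $B\in F$), which is inherent in the structure of affine homotopy lifts between infra-nilmanifolds, iterates in a purely formal way to $D_\ast^n A_\ast = B'_\ast D_\ast^n$ for some $B'\in F$. Then, starting from any $A_1\in F$, part (2) of Lemma \ref{Lemma Bram} produces indices $j,l$ with $(A_{j,\ast}D_\ast^n)^l=D_\ast^{nl}$. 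Semi-hyperbolicity of $f$ forbids $D_\ast$ from having any root-of-unity eigenvalue, so $D_\ast^{nl}$ has no eigenvalue $1$; therefore neither does $A_{j,\ast}D_\ast^n$, giving $\det(I-A_{j,\ast}D_\ast^n)\neq 0$. Part (1) of the lemma transfers this to $\det(I-A_{1,\ast}D_\ast^n)\neq 0$, and since $A_1$ was arbitrary the nonvanishing holds for every $A\in F$.

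There is essentially no obstacle here, which is exactly why the remark preceding the proposition asserts that the proof of Theorem \ref{thmNF=Nf)} already contains the argument. The one bookkeeping step worth being careful about is the passage from the intertwining relation for $D_\ast$ to the analogous one for $D_\ast^n$, so that Lemma \ref{Lemma Bram} can be applied to $D_\ast^n$ in place of $D_\ast$; but this is a direct induction on $n$. Once that is in hand, Theorem \ref{thmRInf} gives $R(f^n)<\infty$, Theorem \ref{thmN=R} then yields $N(f^n)=R(f^n)$, and the definition of weakly Jiang is met.
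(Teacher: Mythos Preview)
Your proposal is correct and follows exactly the approach the paper indicates: the proof of Theorem \ref{thmNF=Nf)} already establishes $\det(I-A_\ast D_\ast^n)\neq 0$ for all $A\in F$ via Lemma \ref{Lemma Bram}, and then Theorems \ref{thmRInf} and \ref{thmN=R} give $N(f^n)=R(f^n)<\infty$, which is precisely the weakly Jiang condition. Your only addition is making explicit the inductive passage from the intertwining relation $D_\ast A_\ast = B_\ast D_\ast$ to its $n$-fold iterate, a detail the paper leaves implicit but which is indeed routine.
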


With this proposition in mind, one can easily see that Theorem \ref{thmNF=Nf)} is a special case of Theorem \ref{thmhk}.

\medskip

Later on, in the last section, we will show, under mild conditions, that semi-hyperbolic maps are the only maps for which a non-trivial equality $NF_n(f)=N(f^n)$ holds.

\medskip

Theorem \ref{thmNF=Nf)} actually has a nice corollary in the area of dynamical zeta functions. By $N_f(z)$, we mean the Nielsen zeta function, as defined in \cite{fels00-2} ,\cite{fels88-1} or \cite{fp85-1}. In \cite{fels00-2}, the following definition of the  \textbf{minimal dynamical zeta function} can be found: $$NF_f(z)=\exp\left(\sum_{k=1}^\infty \frac{NF_k(f)z^k}{k}\right).$$

%By the main result of \cite{jezi03-01}, we actually know that for every self-map $f$ on a compact manifold of $\dim\geq 3$, there exists a map $g$ in the homotopy class of $f$ such that $NF_f(z)$ coincides with the following zeta function, which was also defined in \cite{fels88-1}
%$$M_g(z)=\exp\left(\sum_{k=1}^\infty \frac{\#\Fix(g^k)z^k}{k}\right).$$

We now have the following corollary.

\begin{corollary}
Let $f$ be a semi-hyperbolic map on an infra-nilmanifold, then $N_f(z)=~NF_f(z)$.
\end{corollary}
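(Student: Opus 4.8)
The plan is to show that the two power series defining $N_f(z)$ and $NF_f(z)$ coincide term by term, which is immediate once we know their defining coefficients agree. Recall that the Nielsen zeta function is $N_f(z)=\exp\left(\sum_{k=1}^\infty \frac{N(f^k)z^k}{k}\right)$, while by definition $NF_f(z)=\exp\left(\sum_{k=1}^\infty \frac{NF_k(f)z^k}{k}\right)$. So it suffices to prove that $N(f^k)=NF_k(f)$ for every $k\geq 1$.

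But this is exactly the content of Theorem~\ref{thmNF=Nf)}: if $f$ is a semi-hyperbolic map on an infra-nilmanifold, then $NF_n(f)=N(f^n)$ for all $n>0$. Hence the two formal power series in the exponentials are identical, and therefore so are their exponentials. I would simply write:

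\begin{proof}
By definition, $N_f(z)=\exp\left(\sum_{k=1}^\infty \frac{N(f^k)z^k}{k}\right)$ and $NF_f(z)=\exp\left(\sum_{k=1}^\infty \frac{NF_k(f)z^k}{k}\right)$. Since $f$ is semi-hyperbolic, Theorem~\ref{thmNF=Nf)} gives $NF_k(f)=N(f^k)$ for every $k>0$, so the series inside the two exponentials coincide. Hence $N_f(z)=NF_f(z)$.
\end{proof}

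There is essentially no obstacle here: the corollary is a one-line consequence of Theorem~\ref{thmNF=Nf)} together with the definitions of the two zeta functions. The only point requiring any care is to make sure the definition of the Nielsen zeta function $N_f(z)$ being used is indeed $\exp\left(\sum_{k\geq1} N(f^k)z^k/k\right)$ (the standard one from \cite{fels00-2,fels88-1,fp85-1} cited just above), so that the comparison is literally a comparison of coefficients; once that is in place, equality of the coefficients forces equality of the functions as formal power series (and as holomorphic functions wherever they converge).
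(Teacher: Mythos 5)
Your proof is correct and matches the paper's (implicit) argument exactly: the corollary is stated without proof because it follows immediately from Theorem~\ref{thmNF=Nf)} together with the definitions of $N_f(z)$ and $NF_f(z)$ as exponentials of the generating series with coefficients $N(f^k)/k$ and $NF_k(f)/k$ respectively. Nothing further is needed.
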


By using the main result of \cite{dd13-2}, which states that Nielsen zeta functions are rational for self-maps on infra-nilmanifolds, we can also conclude the following.

\begin{corollary}
Let $f$ be a semi-hyperbolic map on an infra-nilmanifold, then $NF_f(z)$ is a rational function.
\end{corollary}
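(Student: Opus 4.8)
The plan is to chain together two results that are already in hand. The previous corollary gives the identity of formal power series $N_f(z) = NF_f(z)$ for a semi-hyperbolic map $f$ on an infra-nilmanifold, and the main theorem of \cite{dd13-2} asserts that the Nielsen zeta function $N_f(z)$ of any self-map of an infra-nilmanifold is a rational function. So the argument is essentially one line: since $NF_f(z) = N_f(z)$ and $N_f(z)$ is rational, $NF_f(z)$ is rational.

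More carefully, I would first invoke Theorem~\ref{thmNF=Nf)}, which says $NF_k(f) = N(f^k)$ for every $k > 0$ when $f$ is semi-hyperbolic. Substituting this into the defining series $NF_f(z) = \exp\left(\sum_{k=1}^\infty \frac{NF_k(f)z^k}{k}\right)$ shows it coincides term-by-term with $N_f(z) = \exp\left(\sum_{k=1}^\infty \frac{N(f^k)z^k}{k}\right)$, which is exactly the content of the preceding corollary. Then I would cite the rationality result from \cite{dd13-2} for $N_f(z)$ and conclude.

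There is essentially no obstacle here; the only thing to be careful about is that rationality is a property of the function, so one should note that the equality $N_f(z) = NF_f(z)$ as formal power series (equivalently, as germs of analytic functions at $0$) transfers rationality, which is immediate since a formal power series represents at most one rational function. I would keep the proof to two or three sentences, essentially: ``By Theorem~\ref{thmNF=Nf)}, $NF_k(f) = N(f^k)$ for all $k$, so $NF_f(z) = N_f(z)$; by \cite{dd13-2}, $N_f(z)$ is rational, hence so is $NF_f(z)$.''
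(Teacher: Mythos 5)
Your proof is correct and matches exactly what the paper intends: the corollary is stated immediately after both the identity $N_f(z)=NF_f(z)$ and the citation to \cite{dd13-2} for rationality of $N_f(z)$, precisely so that it follows by combining the two. Nothing further is needed.
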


\section{A method for computing $NF_n(f)$}
In theory, we are now capable to compute $NF_n(f)$, due to Proposition \ref{propIIB}. By using the standard formula for Nielsen numbers for maps on infra-nilmanifolds (Theorem \ref{LeeForm}), the computation of $N(f^n)$ becomes very simple and therefore, the only thing left to check is how many fixed point classes lie in $IIB_n(f)$. \medskip

In some cases, for example for semi-hyperbolic maps, the computation of $\# IIB_n(f)$ becomes trivial. However, in a more general setting, this number can be a very tedious thing to compute. In this section, we will try to develop a method to make this computation a bit easier.

\subsection{$\sim_f$-equivalence classes}
We start this section with the following definition.
\begin{definition}
Let $f: \Gamma\backslash G \to \Gamma\backslash G$ be a continuous map, such that $F$ is the holonomy group of $\Gamma$. We will say that $A, B \in F$ are \textbf{$f$-conjugated}, if there exist $a,b\in G$ and $\gamma\in \Gamma$ such that $(a,A)$ and $(b, B)$ are elements of $\Gamma$ and $$\gamma\circ (a,A) \circ f_*(\gamma^{-1})=(b,B).$$We will write $A\sim_f B.$
\end{definition}

An alternative for this definition is given in the following lemma. In general, the definition will be more useful when one quickly wants to find elements that are $f$-conjugated. The lemma below is often more useful when it comes to finding properties of the $\sim_f$-relation. 
\begin{lemma}\label{lemma alternatief definitie}
Let $f: \Gamma\backslash G \to \Gamma\backslash G$ be a continuous map, such that $F$ is the holonomy group of $\Gamma$. Then $A\sim_f B$ if and only if for all $(a,A)\in \Gamma$, there exist $(b,B), \gamma \in \Gamma$, such that$$\gamma\circ (a,A) \circ f_*(\gamma^{-1})=(b,B).$$
\end{lemma}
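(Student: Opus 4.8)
The plan is to prove the equivalence in Lemma~\ref{lemma alternatief definitie} by showing that the (a priori weaker) existential condition in the definition of $\sim_f$ actually holds for \emph{every} representative $(a,A)\in\Gamma$ of the holonomy element $A$, once it holds for one. The forward direction (from the ``for all'' statement to the definition) is immediate, since $\Gamma\cap G$ is non-empty; so the real content is the converse: assume $\gamma_0\circ(a_0,A)\circ f_*(\gamma_0^{-1})=(b_0,B)$ for some particular representative $(a_0,A)\in\Gamma$, and deduce the analogous identity for an arbitrary $(a,A)\in\Gamma$.

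First I would observe that if $(a_0,A)$ and $(a,A)$ are both in $\Gamma$, then their ``difference'' $(a,A)\circ(a_0,A)^{-1}$ lies in $\Gamma\cap G$, since applying $p$ kills the $\Aut(G)$-part: $p\big((a,A)(a_0,A)^{-1}\big)=A A^{-1}=\Id$. Write $(t,\Id):=(a,A)\circ(a_0,A)^{-1}\in\Gamma\cap G$, so that $(a,A)=(t,\Id)\circ(a_0,A)$. The key step is then to feed this into the conjugation identity: starting from the hypothesis
\[
\gamma_0\circ(a_0,A)\circ f_*(\gamma_0^{-1})=(b_0,B),
\]
I want to produce a new $\gamma\in\Gamma$ so that $\gamma\circ(a,A)\circ f_*(\gamma^{-1})\in\Gamma$ still has holonomy part $B$. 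Substituting $(a,A)=(t,\Id)(a_0,A)$ gives
\[
\gamma\circ(t,\Id)\circ(a_0,A)\circ f_*(\gamma^{-1}),
\]
and the natural move is to choose $\gamma=\gamma_0\circ s$ for a suitable $s\in\Gamma\cap G$ that absorbs the extra translation $(t,\Id)$; because $\Gamma\cap G$ is a normal (indeed uniform, discrete) subgroup of $\Gamma$ and $f_*$ maps $\Gamma$ to $\Gamma$, conjugates and images of elements of $\Gamma\cap G$ stay in $\Gamma$, so the product remains in $\Gamma$. Whatever element $(b,B)$ results, its $p$-image is $B$ (the $(t,\Id)$ and $s$ factors contribute nothing to the holonomy part, and $f_*$ preserves $\Gamma\cap G$ hence contributes a translation too), so $A\sim_f B$ via this representative.

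The main obstacle I anticipate is purely bookkeeping in the non-abelian group $\Aff(G)=G\semi\Aut(G)$: one must track how the translation parts combine under the semidirect-product multiplication $(g,\alpha)(h,\beta)=(g\,\alpha(h),\alpha\beta)$ and under application of $f_*$, and verify that the element $s\in\Gamma\cap G$ can indeed be chosen (or that \emph{any} choice works, since all we need is membership in $\Gamma$ together with the correct holonomy part $B$). Here it is worth noting that we do not actually need to hit the \emph{same} $(b,B)$ as in the hypothesis --- the lemma only asserts existence of \emph{some} $(b,B)\in\Gamma$ with the right holonomy, so the argument is robust: any $\gamma\in\Gamma$ of the form $\gamma_0 s$ with $s\in\Gamma\cap G$ will do, because $\gamma\circ(a,A)\circ f_*(\gamma^{-1})$ automatically lies in $\Gamma$ and automatically has $p$-image $B$. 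Finally I would remark, as the lemma's preamble does, that this ``for all representatives'' form is the convenient one for establishing structural properties of $\sim_f$ (reflexivity, symmetry, transitivity, and well-definedness on $F$), since it frees one from having to track which specific lift of a holonomy element is being used.
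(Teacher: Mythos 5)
The proposal goes astray at exactly the point where you modify $\gamma$. You propose $\gamma=\gamma_0\circ s$ with $s\in\Gamma\cap G$ and then argue that the holonomy part comes out to $B$ because ``$f_*$ preserves $\Gamma\cap G$.'' That claim is false in general. If $(d,D)$ is an affine homotopy lift and $(g,\Id)\in\Gamma\cap G$, then $f_*(g,\Id)=(h,H)$ satisfies $HD=D$, which forces $H=\Id$ only when $D$ is invertible. When $D$ is not invertible, $f_*$ can send $\Gamma\cap G$ outside of $\Gamma\cap G$ — this is exactly what the set $f_\#(\Id)$ in Lemma~\ref{lemma f-equivalentieklassen} measures, and Corollary~\ref{corollary D invertible} makes clear that $f_\#(\Id)$ is only guaranteed to be trivial when $D$ is invertible. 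Concretely, with $\gamma=\gamma_0 s$ one computes
\[
p\bigl(\gamma\circ(a,A)\circ f_*(\gamma^{-1})\bigr)=B\cdot p\bigl(f_*(\gamma_0 s^{-1}\gamma_0^{-1})\bigr),
\]
and the right-hand factor need not be $\Id$.

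The fix is to \emph{not} touch $\gamma$ at all. The paper factors the arbitrary representative on the right, $(a,A)=(a_0,A)\circ\bigl(A^{-1}(a_0^{-1}a),\Id\bigr)$, and keeps the \emph{same} $\gamma_0$; the surplus factor that appears is $f_*(\gamma_0)\circ\bigl(A^{-1}(a_0^{-1}a),\Id\bigr)\circ f_*(\gamma_0^{-1})$, which is a conjugate of an element of $\Gamma\cap G$ by the element $f_*(\gamma_0)\in\Gamma$ — and normality of $\Gamma\cap G$ in $\Gamma$ puts it back in $\Gamma\cap G$. Crucially, $f_*$ is never applied to an element of $\Gamma\cap G$, only conjugation by an image of $f_*$ occurs. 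Your left decomposition $(a,A)=(t,\Id)(a_0,A)$ would also work cleanly if you kept $\gamma=\gamma_0$: then $\gamma_0(a,A)f_*(\gamma_0^{-1})=\bigl(\gamma_0(t,\Id)\gamma_0^{-1}\bigr)\cdot(b_0,B)$, and again normality finishes it. The error is specifically in introducing the factor $s$, which drags $f_*(s^{-1})$ into the picture.
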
 
\begin{proof}
One direction is obvious. For the other direction, pick an arbitrary $(a,A)\in \Gamma$ and suppose that $A\sim_f B$. This means that there exist $a_0, b_0 \in G$ and $\gamma_0 \in \Gamma$, such that $(a_0,A),(b_0,B) \in \Gamma$ and $$\gamma_0\circ (a_0,A) \circ f_*(\gamma_0^{-1})=(b_0,B).$$Then $$\gamma_0\circ (a,A) \circ f_*(\gamma_0^{-1})=\left(\gamma_0\circ (a_0,A) \circ f_*(\gamma_0^{-1})\right) \circ \left(f_*(\gamma_0) \circ (A^{-1}(a_0^{-1}a), \Id) \circ f_*(\gamma_0^{-1})\right).$$As $\Gamma \cap G$ is a normal divisor of $\Gamma$, there exists a $(c,\Id)\in \Gamma\cap G$, such that $$\gamma_0\circ (a,A) \circ f_*(\gamma_0^{-1})=(b_0,B)\circ (c,\Id).$$
\end{proof}

A simple consequence of the previous lemma is the following.
\begin{corollary}
$\sim_f$ is an equivalence relation.
\end{corollary}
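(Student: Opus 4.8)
The plan is to verify the three defining properties of an equivalence relation—reflexivity, symmetry and transitivity—for the relation $\sim_f$ on the holonomy group $F$, and to do so we exploit Lemma~\ref{lemma alternatief definitie}, which frees us from having to track a particular representative $(a,A)\in\Gamma$: it says $A\sim_f B$ holds as soon as \emph{some} instance of the conjugation equation $\gamma\circ(a,A)\circ f_*(\gamma^{-1})=(b,B)$ can be solved with $\gamma,(b,B)\in\Gamma$, and then it can be solved starting from \emph{any} $(a,A)\in\Gamma$.

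For reflexivity, I would take $\gamma=(1,\Id)$ (the identity of $\Gamma$), for which $\gamma\circ(a,A)\circ f_*(\gamma^{-1})=(a,A)$, so $A\sim_f A$ with the witnesses $(b,B)=(a,A)$ and that same $\gamma$; here one only needs that $\Gamma$ contains the identity and that $f_*$ is a homomorphism, hence $f_*(1,\Id)=(1,\Id)$. For symmetry, suppose $A\sim_f B$; by the lemma, for the chosen $(a,A)$ there are $(b,B),\gamma\in\Gamma$ with $\gamma\circ(a,A)\circ f_*(\gamma^{-1})=(b,B)$. Rearranging gives $(a,A)=\gamma^{-1}\circ(b,B)\circ f_*(\gamma)$, and since $f_*(\gamma)=f_*((\gamma^{-1})^{-1})$, setting $\delta=\gamma^{-1}\in\Gamma$ yields $\delta\circ(b,B)\circ f_*(\delta^{-1})=(a,A)$, which exhibits $B\sim_f A$ (the roles of the two elements of $\Gamma$ being $(b,B)$ and $(a,A)$). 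For transitivity, assume $A\sim_f B$ and $B\sim_f C$. Using the lemma for $A\sim_f B$ we get $\gamma,(b,B)\in\Gamma$ with $\gamma\circ(a,A)\circ f_*(\gamma^{-1})=(b,B)$; applying the lemma again for $B\sim_f C$ \emph{to this very element} $(b,B)\in\Gamma$ produces $\delta,(c,C)\in\Gamma$ with $\delta\circ(b,B)\circ f_*(\delta^{-1})=(c,C)$. Substituting and using that $f_*$ is a homomorphism, so $f_*(\delta^{-1})f_*(\gamma^{-1})=f_*((\gamma\delta)^{-1}) = f_*(\delta^{-1}\gamma^{-1})$, i.e. $(\delta\gamma)\circ(a,A)\circ f_*((\delta\gamma)^{-1})=(c,C)$ with $\delta\gamma\in\Gamma$, gives $A\sim_f C$.

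The only genuinely delicate point—and the reason the lemma is phrased the way it is—is the transitivity step: there one must chain two conjugations, and for the chaining to work the middle element must be the \emph{same} group element $(b,B)\in\Gamma$ in both instances. The first application of the lemma hands us a specific $(b,B)\in\Gamma$, and then the ``for all $(a,A)\in\Gamma$'' strength of Lemma~\ref{lemma alternatief definitie} is exactly what lets us feed that $(b,B)$ into the second relation. Without the lemma, working from the bare definition, one would only know $B\sim_f C$ via \emph{some} representative of $B$ possibly different from $(b,B)$, and reconciling the two would require precisely the normality-of-$\Gamma\cap G$ argument that the lemma has already absorbed. So I would simply invoke the lemma three times and let the homomorphism property of $f_*$ together with closure of $\Gamma$ under products and inverses do the rest; each of the three verifications is a two-line manipulation in $\aff(G)$.
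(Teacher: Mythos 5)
Your proof is correct and follows the paper's approach exactly: the paper also declares reflexivity and symmetry "easy to see" and devotes all the work to transitivity, chaining two conjugations by first getting the equation for $A\sim_f B$ from the definition and then invoking Lemma~\ref{lemma alternatief definitie} to get the second equation starting from the \emph{specific} representative $(b,B)$ produced by the first. (One small slip in your transitivity aside: the intermediate expression should read $f_*(\gamma^{-1})f_*(\delta^{-1})=f_*((\delta\gamma)^{-1})$ rather than $f_*(\delta^{-1})f_*(\gamma^{-1})$, but your displayed conclusion $(\delta\gamma)\circ(a,A)\circ f_*((\delta\gamma)^{-1})=(c,C)$ is correct.)
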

\begin{proof}
The fact that this relation is reflexive and symmetric is easy to see, so the only thing left to prove is the transitivity. Suppose that $A\sim_f B$ and $B\sim _f C$. By definition, there exist $a, b \in G$ and $\gamma_1 \in \Gamma$, such that $(a,A), (b,B)\in \Gamma$ and $$\gamma_1\circ (a,A) \circ f_*(\gamma_1^{-1})=(b,B).$$By Lemma \ref{lemma alternatief definitie}, we know there exist $\gamma_2, (c,C) \in \Gamma$, such that $$\gamma_2\circ (b,B) \circ f_*(\gamma_2^{-1})=(c,C).$$By combining both equations, we see$$(\gamma_2 \circ \gamma_1)\circ (a,A)\circ f_*((\gamma_2\circ \gamma_1)^{-1})=(c,C),$$which means that $A\sim_f C$.
\end{proof}

The fact that $\sim_f$ is an equivalence relation implies that we can partition $F$ into \textbf{$\sim_f$-equivalence classes}. There is an even more convenient way to look at the $\sim_f$-equivalence classes for which we only need to work in the holonomy group $F$ of our infra-nilmanifold. This will be the ideal tool to compute these classes in a more effective way.

\begin{definition}
By $f_\#(\Id)$, we mean the set of all $A\in F$, such that there exist $(g,\Id), (a,A)\in \Gamma$, for which $f_*(g,\Id)=(a,A)$. Analogously, $f_\#(C)$ is the set of all $A\in F$, such that there exist $(c,C), (a,A)\in \Gamma$, for which $f_*(c,C)=(a,A)$.
\end{definition}

Note that it is known that $\Gamma\cap G$ is finitely generated. With this in mind, we can deduce the following lemma. 

\begin{lemma}\label{lemma f-equivalentieklassen}
Pick an arbitrary $(c,C)\in \Gamma$. Again, $p:\Aff(G)=G\semi \Aut(G) \to \Aut(G)$ denotes the natural projection onto the second factor of the semi-direct product. Suppose that $(g_i,\Id)_{i=1}^n$ is a set of generators for $\Gamma\cap G$. Then we can describe $f_\#(\Id)$ and $f_\#(C)$ as follows:
\begin{itemize}
\item $f_\#(\Id)=\grp\{p(f_*(g_i,\Id))\}.$
\item $f_\#(C)=p(f_*(c,C))f_\#(\Id)=f_\#(\Id)p(f_*(c,C)).$
\end{itemize}

\end{lemma}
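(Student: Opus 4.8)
The plan is to compute $f_\#(\Id)$ and $f_\#(C)$ directly from the defining relation $f_*(\gamma) = $ (the image), exploiting that $f_*$ is a group homomorphism on $\Gamma$ and that $\Gamma \cap G$ is a normal subgroup. First I would observe that for any $(g,\Id) \in \Gamma \cap G$ we have $f_*(g,\Id) \in \Gamma$, and since $\Gamma \cap G$ is normal in $\Gamma$ and $f_*$ need not preserve it, we should check that $p(f_*(g,\Id))$ is nonetheless a well-defined element of $F$ — but that is automatic because $f_*(g,\Id) \in \Gamma$ and $p(\Gamma) = F$. Then the key point is that $(g,\Id) \mapsto p(f_*(g,\Id))$ is a homomorphism from $\Gamma \cap G$ to $F$: it is the composition of $f_*|_{\Gamma\cap G}$ with $p$, both of which are homomorphisms. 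Hence its image is the subgroup of $F$ generated by the images of any generating set of $\Gamma \cap G$, which gives $f_\#(\Id) = \grp\{p(f_*(g_i,\Id))\}$. The finite generation of $\Gamma \cap G$ (mentioned just before the lemma) is what guarantees this description is by finitely many generators.

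For the second bullet, I would fix $(c,C) \in \Gamma$ and analyze an arbitrary element of $f_\#(C)$. By definition such an element is $p(f_*(c',C))$ for some $(c',C) \in \Gamma$; since $(c',C)$ and $(c,C)$ have the same $p$-image $C$, their "difference" $(c,C)^{-1}(c',C)$ lies in $\Gamma \cap G$, say it equals $(g,\Id)$, so $(c',C) = (c,C)(g,\Id)$. Applying the homomorphism $f_*$ gives $f_*(c',C) = f_*(c,C)\, f_*(g,\Id)$, and applying $p$ gives $p(f_*(c',C)) = p(f_*(c,C))\, p(f_*(g,\Id))$, which lands in $p(f_*(c,C)) f_\#(\Id)$. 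Conversely, every element of $p(f_*(c,C)) f_\#(\Id)$ arises this way by running the computation backwards: given $A_0 \in f_\#(\Id)$ with $A_0 = p(f_*(g,\Id))$, the element $(c,C)(g,\Id) \in \Gamma$ has $p$-image $C$ and $p(f_*((c,C)(g,\Id))) = p(f_*(c,C)) A_0$. This establishes $f_\#(C) = p(f_*(c,C)) f_\#(\Id)$.

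The remaining claim $p(f_*(c,C)) f_\#(\Id) = f_\#(\Id)\, p(f_*(c,C))$ is the only place where something beyond bookkeeping is needed: it amounts to showing that the subgroup $f_\#(\Id) \leq F$ is normalized by $p(f_*(c,C))$. The natural route is to use the normality of $\Gamma \cap G$ in $\Gamma$: for $(g,\Id) \in \Gamma\cap G$ and $(c,C) \in \Gamma$, the conjugate $(c,C)(g,\Id)(c,C)^{-1}$ again lies in $\Gamma \cap G$, and applying $f_*$ and then $p$ yields $p(f_*(c,C)) p(f_*(g,\Id)) p(f_*(c,C))^{-1} \in f_\#(\Id)$, because $p\circ f_*$ is a homomorphism and it sends $\Gamma \cap G$ into $f_\#(\Id)$. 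Hence $p(f_*(c,C))$ conjugates $f_\#(\Id)$ into itself, and since $F$ is finite this conjugation is a bijection onto $f_\#(\Id)$, giving the two-sided coset equality. I would also remark that, as a sanity check, the first bullet is the special case $C = \Id$, $c = \text{identity element}$: then $p(f_*(\Id,\Id)) = \Id$ and the formula collapses correctly.

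I expect the main (and really only) obstacle to be a notational one: keeping careful track of the fact that $f_*$ is defined on all of $\Gamma$ and is a genuine homomorphism there — even though $f$ is merely continuous and $(d,D)$ is only an affine homotopy lift — so that all the manipulations $f_*(\gamma\delta) = f_*(\gamma)f_*(\delta)$ are legitimate, and that $p$ being a homomorphism lets us push everything down to $F$. Once that is in place, the whole lemma is the standard fact that a homomorphism on a group generated by a normal subgroup and one extra coset representative is determined by cosets of the image of that normal subgroup.
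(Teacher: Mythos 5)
Your proof is correct and follows the paper's reasoning almost step for step: both derive $f_\#(\Id)$ as the image of the homomorphism $p\circ f_*|_{\Gamma\cap G}$, and both obtain $f_\#(C)=p(f_*(c,C))f_\#(\Id)$ by writing an arbitrary $(c',C)$ as $(c,C)$ times an element of $\Gamma\cap G$. The only difference is in how you establish the two-sided coset equality: you show $p(f_*(c,C))$ normalizes $f_\#(\Id)$ (using normality of $\Gamma\cap G$ and finiteness of $F$ to upgrade the inclusion to an equality), whereas the paper simply repeats the one-sided computation on the other side, factoring out $p(f_*(c,C))$ on the right via $(c_1,C)(c,C)^{-1}\in\Gamma\cap G$; this variant avoids invoking finiteness and is marginally more self-contained, but the two routes are equivalent in substance. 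As a small refinement, note that your finiteness appeal is dispensable: applying your conjugation argument also with $(c,C)^{-1}$ in place of $(c,C)$ yields the reverse inclusion directly.
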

\begin{proof}
It is clear that $f_\#(\Id)$ contains all elements $p(f_*(g_i,\Id))$ and it is also clear that $f_*(\Id)$ is precisely the set $p(f_*(\Gamma\cap G))=p(f_*(\grp \{ (g_i, \Id)\ \| \ i=1\dots n\}))$. As $p\circ f_*$ is a morphism, this will be equal to $\grp\{p(f_*( g_i, \Id)) \ \| \ i=1\dots n\}$, which proves the first statement.

\medskip

Take an arbitrary element of the form $(c_1, C)\in \Gamma$. It is clear that $p(f_*(c_1,C))\in f_\#(C)$. Now, an easy computation shows that $$p(f_*(c_1,C))=p(f_*(c,C))p(f_*(c,C)^{-1})p(f_*(c_1,C))=p(f_*(c,C))p(f_*(C^{-1}(c^{-1}c_1),\Id)).$$As $p(f_*(C^{-1}(c^{-1}c_1),\Id))\in f_\#(\Id)$, the first equality of second statement is proved. The second equality can be proved in a similar way, by multiplying with  $p(f_*(c,C)^{-1})p(f_*(c,C))$ on the right.
\end{proof}

As a side remark, note that an easy consequence of this lemma is the fact that if $p\circ f_*:\Gamma\to F$ is a surjective morphism, then $f_\#(\Id)$ will be a normal divisor of $F$.

\medskip 

By using this lemma, we can derive an easier way of determining $\sim_f$-equivalence classes.

\begin{proposition}
Suppose $A,B$ are elements in $F$. Then, $A\sim_f B$ if and only if there exists a $C\in F$, such that $B\in CAf_\#(C)^{-1}$. Here $f_\#(C)^{-1}$ denotes the set of all inverses of elements in $f_\#(C)$, or equivalently $f_\#(C^{-1})$.
\end{proposition}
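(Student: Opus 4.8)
The plan is to unwind the definition of $\sim_f$ using the convenient reformulation from Lemma~\ref{lemma alternatief definitie}, and then strip away the translational parts by applying Lemma~\ref{lemma f-equivalentieklassen}. Concretely, suppose $A\sim_f B$. By definition there exist $a,b\in G$ and $\gamma\in\Gamma$ with $(a,A),(b,B)\in\Gamma$ and $\gamma\circ(a,A)\circ f_*(\gamma^{-1})=(b,B)$. Write $\gamma=(g,C)$ with $C=p(\gamma)\in F$. Projecting the equality $\gamma\circ(a,A)\circ f_*(\gamma^{-1})=(b,B)$ under $p:\Aff(G)\to\Aut(G)$, and using that $p\circ f_*$ is a homomorphism, gives $C\,A\,p(f_*(\gamma))^{-1}=B$, i.e. $B=CA\,p(f_*(g,C))^{-1}$. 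Since $(g,C)\in\Gamma$, Lemma~\ref{lemma f-equivalentieklassen} tells us $p(f_*(g,C))\in f_\#(C)$, hence $B\in CA f_\#(C)^{-1}$, which is one direction.

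For the converse, suppose $B\in CAf_\#(C)^{-1}$ for some $C\in F$, say $B=CA\,p(f_*(c_1,C))^{-1}$ where $(c_1,C)\in\Gamma$ is a witness for membership in $f_\#(C)$. I would then try to produce the required $\gamma\in\Gamma$ by taking $\gamma=(c_1,C)$ (or a suitable modification of it). The point is that we are free to choose the translational parts $a,b$: by Lemma~\ref{lemma alternatief definitie}, to verify $A\sim_f B$ it suffices to check, for some single choice of $(a,A)\in\Gamma$ (which exists since $A\in F=p(\Gamma)$), that $\gamma\circ(a,A)\circ f_*(\gamma^{-1})$ lands in $\Gamma$ and has holonomy part $B$ — the latter is exactly the projected identity $C\,A\,p(f_*(c_1,C))^{-1}=B$ which holds by hypothesis, and the former follows because $\gamma,(a,A),f_*(\gamma^{-1})$ all lie in $\Gamma$ (using that $f_*(\Gamma)\subseteq\Gamma$ and $\Gamma$ is a group). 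So the element $(b,B):=\gamma\circ(a,A)\circ f_*(\gamma^{-1})\in\Gamma$ has holonomy $B$, which is precisely what the definition of $A\sim_f B$ requires.

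The last sentence of the statement, that $f_\#(C)^{-1}=f_\#(C^{-1})$, should be checked separately but is short: an element of $f_\#(C)$ is $p(f_*(c,C))$ for some $(c,C)\in\Gamma$, and its inverse is $p(f_*((c,C)^{-1}))=p(f_*(C^{-1}(c^{-1}),C^{-1}))$, which lies in $f_\#(C^{-1})$ since $(c,C)^{-1}\in\Gamma$ has holonomy $C^{-1}$; the reverse inclusion is symmetric. I expect the main obstacle to be purely bookkeeping: keeping straight which elements are required to genuinely lie in $\Gamma$ (as opposed to merely in $\Aff(G)$) at each step, and making sure the translational components — which play no role in the final holonomy-level statement — are handled cleanly, for which invoking Lemma~\ref{lemma alternatief definitie} rather than the raw definition is the key simplification. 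No deep idea is needed beyond applying $p$ and the two lemmas already established.
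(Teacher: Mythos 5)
Your proof is correct, and it is a streamlined version of the paper's argument. The logical skeleton is the same in both: project under $p$ for the forward implication, and for the converse reconstruct a conjugator $\gamma$ from the witness for $B\in CAf_\#(C)^{-1}$. The difference lies in how the converse handles $f_\#(C)^{-1}$. The paper routes through Lemma~\ref{lemma f-equivalentieklassen}, writing elements of $f_\#(C)^{-1}$ as $p\bigl(f_*(c,C)^{-1}f_*(g,\Id)\bigr)$ with one fixed $(c,C)$ and a varying $(g,\Id)\in\Gamma\cap G$, then re-assembles $\gamma=(g^{-1},\Id)(c,C)$ by a multiplication trick; it also invokes the lemma again to get $f_\#(C)^{-1}=f_\#(C^{-1})$. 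You instead observe that, straight from the definition, $f_\#(C)=\{p(f_*(c_1,C)) : (c_1,C)\in\Gamma\}$, so any element of $f_\#(C)^{-1}$ is $p(f_*(c_1,C))^{-1}$ and $\gamma:=(c_1,C)$ works immediately; the identity $f_\#(C)^{-1}=f_\#(C^{-1})$ then follows just from $(c_1,C)^{-1}\in\Gamma$ having holonomy $C^{-1}$. This buys you a shorter argument that does not need Lemma~\ref{lemma f-equivalentieklassen} at all, at the cost of not exhibiting the coset structure $f_\#(C)=p(f_*(c,C))f_\#(\Id)$ that the paper reuses elsewhere. One small remark: you also do not actually need Lemma~\ref{lemma alternatief definitie} in the converse --- once $\gamma=(c_1,C)$ is chosen, picking any $(a,A)\in\Gamma$ and defining $(b,B):=\gamma(a,A)f_*(\gamma^{-1})\in\Gamma$ already satisfies the raw definition of $A\sim_f B$, so citing that lemma there is harmless but superfluous.
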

\begin{proof}
One direction is obvious. For the other direction, suppose that there exists a $C\in F$, such that $B\in CAf_\#(C)^{-1}$. Then, there exist $a,c\in G$, such that $(c,C), (a,A)\in \Gamma$. By Lemma~\ref{lemma f-equivalentieklassen}, any element in $f_\#(C)^{-1}$ will come from an element of the form $f_*(c,C)^{-1}f_*(g,\Id)$, with $(g,\Id)\in \Gamma$. As a result, we find that there exists a $b\in G$, such that $$(c,C)(a,A)f_*(c,C)^{-1}f_*(g,\Id)=(b,B).$$Note that $(b,B)$ will also be an element of $\Gamma$. By multiplying both sides on the left with $(g^{-1},\Id)$ (which is also in $\Gamma$), we get$$ (g^{-1},\Id)(c,C)(a,A)f_*((g^{-1},\Id)(c,C))^{-1}=(g^{-1},\Id)(b,B)=(g^{-1}b,B).$$This proves that $A\sim_f B$.

\medskip

In order to see that $f_\#(C^{-1})=f_\#(C)^{-1}$, note that Lemma \ref{lemma f-equivalentieklassen} tells us that $f_\#(\Id)$ is a group and that $f_\#(C)=p(f_*(c,C))f_\#(\Id)=f_\#(\Id)p(f_*(c,C))$, from which this fact follows immediately, as $$f_\#(C)^{-1}=f_\#(\Id)^{-1}p(f_*(c,C))^{-1}=f_\#(\Id)p(f_*(c,C)^{-1})=f_\#(C^{-1}).$$
\end{proof}

\begin{corollary}
The $\sim_f$-equivalence class of $A$ equals the set $$\bigcup_{C\in F} CAf_\#(C)^{-1}.$$
\end{corollary}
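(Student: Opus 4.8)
The plan is to simply unwind the definition of an equivalence class and quote the proposition immediately preceding the corollary. By definition, the $\sim_f$-equivalence class of $A$ is the set $\{B\in F \mid A\sim_f B\}$, which makes sense since we have already checked that $\sim_f$ is an equivalence relation (in particular symmetric, so it does not matter whether we write $A\sim_f B$ or $B\sim_f A$).

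Next I would invoke the proposition, which characterizes $A\sim_f B$ as the existence of some $C\in F$ with $B\in CAf_\#(C)^{-1}$. Substituting this characterization into the description of the equivalence class gives
\[
\{B\in F \mid A\sim_f B\} = \{B\in F \mid \exists\, C\in F:\ B\in CAf_\#(C)^{-1}\} = \bigcup_{C\in F} CAf_\#(C)^{-1},
\]
where the last equality is just the set-theoretic translation of ``there exists $C$'' into a union over $C$. This is the claimed identity.

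There is essentially no obstacle here: the corollary is a direct restatement of the proposition in the language of equivalence classes, so the only thing to be careful about is that the union is genuinely indexed over all of $F$ (including $C=\Id$, which recovers $Af_\#(\Id)^{-1}\ni A$, confirming that $A$ itself lies in its own class, as it must). If desired one could add the remark that each set $CAf_\#(C)^{-1}$ is a coset-type translate and hence the union is a finite union of finite sets, but this is not needed for the statement.
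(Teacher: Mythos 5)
Your proposal is correct and is essentially the only natural argument: the corollary is a direct restatement of the preceding proposition in the language of equivalence classes, and the paper in fact offers no separate proof for it. Your unwinding of the definition and translation of the existential quantifier into a union matches the intended reasoning.
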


\begin{corollary}\label{corollary D invertible}
Let $(d,D)$ be an affine homotopy lift of a continuous map $f$ on an infra-nilmanifold $\Gamma\backslash G$. When $D$ is invertible in $\Endo(G)$, for any $C\in F$, $f_\#(C)$ will be a singleton.
\end{corollary}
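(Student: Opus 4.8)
The plan is to reduce the statement to a single elementary fact: that $f_\#(\Id)$ is the trivial subgroup $\{\Id\}$ of $F$. Indeed, Lemma \ref{lemma f-equivalentieklassen} already tells us that for any $(c,C)\in\Gamma$ one has $f_\#(C)=p(f_*(c,C))\,f_\#(\Id)$, so $f_\#(C)$ is a singleton precisely when $f_\#(\Id)$ is. The same lemma also gives $f_\#(\Id)=\grp\{p(f_*(g_i,\Id))\}$ for a finite generating set $(g_i,\Id)_{i=1}^{n}$ of $\Gamma\cap G$, so it is enough to check that $p(f_*(g,\Id))=\Id$ for every $(g,\Id)\in\Gamma\cap G$; equivalently, that $f_*$ sends $\Gamma\cap G$ into $G$.

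The key point is that invertibility of $D$ makes the affine homotopy lift invertible \emph{as an element of} $\aff(G)$: if $D\in\Aut(G)$, then $(d,D)$ has inverse $(D^{-1}(d^{-1}),D^{-1})$, so the defining relation $f_*(\gamma)\circ(d,D)=(d,D)\circ\gamma$ (for all $\gamma\in\Gamma$) may be rewritten as the conjugation identity $f_*(\gamma)=(d,D)\,\gamma\,(d,D)^{-1}$. Taking $\gamma=(g,\Id)$ and computing in the semidirect product gives $(d,D)(g,\Id)(d,D)^{-1}=(d\,D(g)\,d^{-1},\Id)$, which lies in $G$. Hence $f_*(g,\Id)\in\Gamma\cap G$ and $p(f_*(g,\Id))=\Id$, as required, so $f_\#(\Id)=\{\Id\}$ and every $f_\#(C)$ is a singleton.

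I do not expect a genuine obstacle here; the argument is essentially a two-line computation once the right reduction is made. The only steps needing a little care are: making sure that the $f_*$ appearing in the definition of $f_\#$ is the morphism $\Gamma\to\Gamma$ attached to the chosen affine homotopy lift $(d,D)$ by Theorem \ref{leemaps}, so that the conjugation identity is legitimate; and checking that the formula $(d,D)^{-1}=(D^{-1}(d^{-1}),D^{-1})$ really does invert $(d,D)$ in $\aff(G)$, which is where the hypothesis $D\in\Aut(G)$ is used.
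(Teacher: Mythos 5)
Your proof is correct and follows essentially the same route as the paper: both reduce to showing $f_\#(\Id)=\{\Id\}$ via Lemma \ref{lemma f-equivalentieklassen}, then exploit the defining relation $f_*(g,\Id)\circ(d,D)=(d,D)\circ(g,\Id)$ together with the invertibility of $D$. The paper simply projects that relation to the automorphism factor and cancels $D$, whereas you compute the full conjugation $(d,D)(g,\Id)(d,D)^{-1}=(dD(g)d^{-1},\Id)$ explicitly, but the underlying mechanism is identical.
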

\begin{proof}
Take an arbitrary element $(g,\Id)$ of $\Gamma\cap G$. Then $f_*(g,\Id)\circ (d,D)=(d,D)\circ (g,\Id)$ and hence also, $p(f_*(g,\Id))\circ D=D$. As $D$ is invertible, $p(f_*(g,\Id))=\Id$. As $(g,\Id)$ was chosen arbitrarily, this means $f_\#(\Id)=\{\Id\}$. By Lemma \ref{lemma f-equivalentieklassen}, $f_\#(C)$ is also a singleton.  
\end{proof}

\subsection{Properties of $\sim_f$-equivalence classes}
A first sign that shows that elements in the same $\sim_f$-equivalence class are strongly connected, can be found in the following lemma.

\begin{lemma}\label{lemdet}
When $A\sim_f B$ and $(d,D)$ is an affine homotopy lift of $f$, then $$\det(I-A_\ast D_\ast)=\det(I-B_\ast D_\ast).$$
\end{lemma}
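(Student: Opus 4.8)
The plan is to unwind the definition of $A\sim_f B$ and turn it into a statement about the differentials $A_\ast, B_\ast, D_\ast$ acting on the Lie algebra $\lie$, so that the determinant identity becomes a purely linear-algebraic fact. Suppose $A\sim_f B$. By definition there are $(a,A), (b,B)\in\Gamma$ and $\gamma\in\Gamma$ with $\gamma\circ(a,A)\circ f_\ast(\gamma^{-1})=(b,B)$. Write $\gamma=(c,C)$ with $C=p(\gamma)\in F$. Since $(d,D)$ is an affine homotopy lift of $f$, we have $f_\ast(\gamma)\circ(d,D)=(d,D)\circ\gamma$ for every $\gamma\in\Gamma$; projecting to the automorphism factor via $p$ gives $p(f_\ast(\gamma))\circ D = C\circ D$ as endomorphisms of $G$, hence $p(f_\ast(\gamma))_\ast \, D_\ast = C_\ast D_\ast$ on $\lie$. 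First I would apply $p$ to the displayed conjugacy equation to get $C\,A\,p(f_\ast(\gamma))^{-1}=B$ in $\Aut(G)$, i.e. $C A p(f_\ast(\gamma))^{-1}=B$; combining with the previous relation, multiply both sides on the right by $D$ to obtain $C A p(f_\ast(\gamma))^{-1} D = B D$. Now the identity $p(f_\ast(\gamma))_\ast D_\ast = C_\ast D_\ast$ rearranges to $p(f_\ast(\gamma))_\ast^{-1} D_\ast = D_\ast C_\ast^{-1}$ (using invertibility of the finite-order $C_\ast$ and $p(f_\ast(\gamma))_\ast$), so passing to differentials in the last equation yields
\[ C_\ast A_\ast D_\ast C_\ast^{-1} = B_\ast D_\ast. \]

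This is exactly the conclusion I want: $B_\ast D_\ast$ is conjugate (by the invertible matrix $C_\ast$) to $A_\ast D_\ast$. Therefore
\[ \det(I - B_\ast D_\ast) = \det(C_\ast(I - A_\ast D_\ast)C_\ast^{-1}) = \det(I - A_\ast D_\ast), \]
which finishes the proof. The bulk of the work is the bookkeeping in the first paragraph: carefully projecting the equation in $\Aff(G)$ down to $\Aut(G)$, and then correctly transporting the resulting automorphism identity to the Lie algebra via the functorial assignment $A\mapsto A_\ast$ (which is multiplicative, since $\exp$ conjugates composition in $\Endo(G)$ to composition in $\Endo(\lie)$).

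The step I expect to be the main obstacle — or at least the one requiring the most care — is making sure the affine-homotopy-lift relation $f_\ast(\gamma)\circ(d,D)=(d,D)\circ\gamma$ is used with the correct $\gamma$ and correct side, and that the projection $p$ interacts properly with composition in the semidirect product (it does, since $p$ is a homomorphism $\Aff(G)\to\Aut(G)$, but the translational parts must be confirmed to drop out cleanly when we pass to differentials, because $(\cdot)_\ast$ is only defined on $\Endo(G)$). A secondary point to check is that $A_\ast$, $B_\ast$, $C_\ast$ are genuine linear automorphisms of $\lie$ — true because $A,B,C$ lie in the finite group $F\subseteq\Aut(G)$, hence have finite order, hence are invertible, and invertibility is preserved under the differential. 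Once these are pinned down, the determinant manipulation is immediate and no genuine computation remains.
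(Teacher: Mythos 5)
Your overall strategy matches the paper's: compose the conjugacy relation with $(d,D)$, invoke the affine-homotopy-lift identity $f_\ast(\gamma)\circ(d,D)=(d,D)\circ\gamma$ to slide $(d,D)$ past $f_\ast(\gamma)^{-1}$, project to the automorphism part, and finish with a conjugacy-of-matrices determinant argument. The paper manipulates in $\aff(G)$ first and projects at the end; you project first and then manipulate in $\Aut(G)$. Both orderings work.

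There is, however, a genuine slip in your projection of the lift relation. Applying the homomorphism $p$ to $f_\ast(\gamma)\circ(d,D)=(d,D)\circ\gamma$ with $\gamma=(c,C)$ gives
\[ p(f_\ast(\gamma))\circ D \;=\; D\circ C, \]
since $p\bigl((d,D)\circ(c,C)\bigr)=D\,C$, \emph{not} $C\circ D$ as you wrote. Your subsequent claim that $p(f_\ast(\gamma))_\ast D_\ast = C_\ast D_\ast$ ``rearranges to'' $p(f_\ast(\gamma))_\ast^{-1} D_\ast = D_\ast C_\ast^{-1}$ is not a valid algebraic step: from $E D_\ast = C_\ast D_\ast$ one gets $E^{-1}C_\ast D_\ast = D_\ast$, which is not the stated identity unless $D_\ast$ is invertible and $E$ commutes with $C_\ast$ (neither is assumed). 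What actually holds is the corrected relation $E D_\ast = D_\ast C_\ast$, and \emph{that} does rearrange to $E^{-1}D_\ast = D_\ast C_\ast^{-1}$ — which is precisely the identity you then use to obtain $C_\ast A_\ast D_\ast C_\ast^{-1}=B_\ast D_\ast$ and conclude. So the proof is correct in spirit and reaches the right conjugacy, but the stated intermediate identity is wrong and the rearrangement from it is unjustified; replace $p(f_\ast(\gamma))\circ D = C\circ D$ by $p(f_\ast(\gamma))\circ D = D\circ C$ and everything downstream is fine.
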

\begin{proof}
As $A\sim_f B$, there exists a $(c,C) \in \Gamma$, such that $$(c,C)\circ(a,A)\circ f_*(c, C)^{-1}=(b,B).$$Of course, we can compose both sides with $(d,D)$. As a result, we get the following equality$$(c,C)\circ(a,A)\circ (d,D)\circ (c,C)^{-1}=(b,B)\circ(d,D).$$Therefore, we have $CADC^{-1}=BD$ in $\Aut(G)$. From this, the statement follows easily.
\end{proof}

The following theorem is in a certain sense the heart of our computational method. It splits the formula from Theorem \ref{LeeForm} into several parts, one for each $\sim_f$-equivalence class. The proof is heavily influenced by the proofs in \cite{kll05-1} and \cite{ll06-1}.

\begin{theorem}\label{thmNAf}
Let $f:\Gamma\backslash G \to \Gamma\backslash G$ be a continuous map on an infra-nilmanifold, with affine homotopy lift $(d,D)$ and holonomy group $F$. Let $(G,p)$ be a universal covering of $\Gamma\backslash G$, such that $\tilde{f}$ is a reference lifting of $f$. If we fix $A \in F$, then, the number of essential fixed point classes that can be written as $p(\Fix((a,A)\circ \tilde{f}))$, which we will denote by $N_A(f)$, equals $$\frac{1}{\#F}\sum_{B\sim_f A}|\det(I-B_\ast D_\ast)|.$$
\end{theorem}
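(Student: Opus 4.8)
The plan is to relate the quantity $N_A(f)$ — the number of essential fixed point classes of $f$ representable as $p(\Fix((a,A)\circ\tilde f))$ — directly to the Lee–Lee formula of Theorem~\ref{LeeForm}, by grouping the summands in that formula according to $\sim_f$-equivalence classes. First I would recall, via Theorem~\ref{thmRInf} and Theorem~\ref{thmN=R}, that $N(f)=R(f)$ whenever $R(f)<\infty$, so that each Reidemeister class contributes a well-defined index to the Nielsen number, and that $p(\Fix((a,A)\circ\tilde f))$ is essential precisely when $\det(I-A_\ast D_\ast)\neq 0$. The key structural input is that, as in the discussion preceding Lemma~\ref{Lemma Bram}, for a fixed $A$ the possible ``coordinates'' $(a,A)\in\Gamma$ of liftings $(a,A)\circ\tilde f$ with second component $A$, taken up to Reidemeister equivalence, are controlled by the $\sim_f$-relation: two such liftings with holonomy parts $A$ and $B$ give Reidemeister-equivalent (hence identical) fixed point classes exactly when there is $\gamma\in\Gamma$ with $\gamma\circ(a,A)\circ f_*(\gamma^{-1})=(b,B)$, i.e.\ when $A\sim_f B$. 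Thus the set of fixed point classes representable with holonomy part lying in the $\sim_f$-class of $A$ is precisely a union over the Reidemeister classes ``supported'' on that $\sim_f$-class.

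With this in place, the argument would proceed by partitioning the full set of Reidemeister classes of $f$ according to which $\sim_f$-class the holonomy part of a representative lies in; this partition is well-defined because $\sim_f$ is an equivalence relation (the corollary after Lemma~\ref{lemma alternatief definitie}). Then $N(f)=\sum_{[A]}N_A(f)$, the sum running over $\sim_f$-classes, where $N_A(f)$ counts the essential classes in the block of $A$. On the other side, the Lee–Lee formula writes $N(f)=\frac1{\#F}\sum_{B\in F}|\det(I-B_\ast D_\ast)|$, and by Lemma~\ref{lemdet} the summand $|\det(I-B_\ast D_\ast)|$ is constant on each $\sim_f$-class. So it remains to match the two block decompositions: I would show that the contribution of the $\sim_f$-class of $A$ to the Lee–Lee sum, namely $\frac1{\#F}\sum_{B\sim_f A}|\det(I-B_\ast D_\ast)|$, equals $N_A(f)$. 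This is the technical core and will require a careful counting argument identical in spirit to the derivation of Theorem~\ref{LeeForm} in \cite{ll09-1}: one restricts the fixed point index computation (summing local indices over lifts, averaging over $F$) to liftings whose holonomy part lies in $[A]$, and checks that the averaging factor $\tfrac1{\#F}$ and the over-counting of each Reidemeister class are compatible with the combinatorics of the $\sim_f$-class.

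The main obstacle I anticipate is precisely this bookkeeping: making rigorous the claim that the portion of the Lee–Lee averaged sum indexed by $\{B : B\sim_f A\}$ computes exactly the index sum over the essential fixed point classes in the $A$-block, without double-counting or missing classes. Concretely, one must verify that the map sending $B\in[A]$ (with a choice of $(b,B)\in\Gamma$) to the Reidemeister class of $(b,B)\circ\tilde f$ is surjective onto the $A$-block, and that its fibers all have the same cardinality — this is where Lemma~\ref{lemma f-equivalentieklassen}, Corollary~\ref{corollary D invertible} and the description of $\sim_f$-classes as $\bigcup_{C\in F}CAf_\#(C)^{-1}$ enter, controlling the fiber size via the subgroups $f_\#(C)$. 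Once the fiber cardinalities are pinned down, the factor $\frac1{\#F}$ reassembles correctly and the claimed identity
\[
N_A(f)=\frac{1}{\#F}\sum_{B\sim_f A}|\det(I-B_\ast D_\ast)|
\]
follows. I would also need to handle the infinite-Reidemeister case: if some $\det(I-B_\ast D_\ast)=0$ with $B\sim_f A$, then by Lemma~\ref{lemdet} all determinants in that block vanish, so $N_A(f)=0$ and the right-hand side is $0$ as well, consistent with Theorem~\ref{thmRInf}; this degenerate case should be dispatched separately at the start.
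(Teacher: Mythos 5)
Your opening move — noting that every Reidemeister class $[(a,A)]_\Gamma$ determines a well-defined $\sim_f$-class $[A]$ (since conjugating $(a,A)$ by $\gamma\circ(\cdot)\circ f_*(\gamma)^{-1}$ can only change the holonomy part within $[A]$), and hence $N(f)=\sum_{[A]}N_A(f)$ — is correct and is implicit in the paper. The degenerate case is also handled correctly via Lemma~\ref{lemdet}. But the rest is a description of what would need to be proved, not a proof, and the one concrete mechanism you offer for the count would not deliver the stated formula.

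The gap is the actual identification of $N_A(f)$ with $\frac{1}{\#F}\sum_{B\sim_f A}|\det(I-B_\ast D_\ast)|$, which you flag as "the technical core" but do not carry out. Moreover, the mechanism you sketch — mapping $B\in[A]$ to a Reidemeister class of $(b,B)\circ\tilde f$ and arguing all fibers have the same size, controlled by $f_\#(C)$ — cannot by itself produce the determinant: counting the $\sim_f$-class modulo fibers only gives a quantity of the form $\#[A]/s$, whereas the correct answer is $\frac{\#[A]}{\#F}|\det(I-A_\ast D_\ast)|$ (the Corollary following the theorem). The determinant must come from somewhere, and in the paper it comes from Anosov's theorem on the covering nilmanifold. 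The paper's proof lifts $f$ to $\overline f$ on the nilmanifold $\Lambda\backslash G$ (with $\Lambda$ the fully characteristic torsion-free subgroup of $\Gamma$ inside $G$), notes that the $\sim_\Lambda$-classes in $\Gamma$ correspond to fixed point classes of lifts $\overline{(b,B)}\circ\overline f$ with $N(\overline{(b,B)}\circ\overline f)=|\det(I-B_\ast D_\ast)|$, and then shows the natural surjection $\Psi:\mathcal{R}_\Lambda(f)\to\mathcal{R}(f)$ is exactly $[\Gamma:\Lambda]$-to-one on essential classes. That last step is genuinely nontrivial: it relies on Proposition~\ref{propaff} (essential fixed point classes of affine maps are singletons) together with the freeness of the $\Gamma$-action, and it is what converts the inequality $N_A(f)\le\sum N(\overline{(b,B)}\circ\overline f)$ into the equality $N_A(f)=\frac{1}{[\Gamma:\Lambda]}\sum N(\overline{(b,B)}\circ\overline f)$, from which the Lee--Lee-style averaging over $\#F$ then follows as in \cite{ll06-1}. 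None of this is present or replaceable by the $f_\#$-fiber argument you outline, so the proposal does not yet constitute a proof of the theorem.
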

\begin{proof}
From now on, $\Lambda$ will be the fully characteristic subgroup of $\Gamma$ described in \cite{ll06-1}. Similarly to Theorem \ref{thmessredgcd}, we have the following commuting diagram:

\begin{displaymath}
    \xymatrix{ G \ar[dd]_p \ar[dr]_{p'} \ar[rrrrr]^{(a,A) \circ \tilde{f}}& & & & & G \ar[dd]^{p} \ar[dl]^{p'} \\
    & \Lambda\backslash G \ar[rrr]^{\overline{(a,A)} \circ \overline{f}} \ar[dl]_{\overline{p}} & & & \Lambda\backslash G \ar[dr]^{\overline{p}} & \\
               \Gamma\backslash G \ar[rrrrr]^{f}& & & &  & \Gamma\backslash G  }.
\end{displaymath} 

For $\alpha \in \Gamma$, we will denote the Reidemeister class of $\alpha$ in $\Gamma$ by $[\alpha]_\Gamma$. Now, define an equivalence relation $\sim{_\Lambda}$ on $\Gamma$ as follows:$$\alpha\sim{_\Lambda}\beta \text{ iff } \exists \lambda\in \Lambda: \beta=\lambda\circ \alpha\circ f_*(\lambda)^{-1}.$$In a similar way as before, $[\alpha]_\Lambda$ will denote the equivalence class with respect to $\sim_\Lambda$ that contains $\alpha$. It is straightforward to prove that $\beta\in [\alpha]_\Lambda$ implies that $p'(\Fix(\beta\circ \tilde{f}))=p'(\Fix(\alpha\circ \tilde{f}))$. In a similar way, one can prove that $\beta\not \in [\alpha]_\Lambda$ implies that $p'(\Fix(\beta\circ \tilde{f}))\cap p'(\Fix(\alpha\circ \tilde{f}))=\emptyset$. Note that this can also mean that $p'(\Fix(\beta\circ \tilde{f}))$ and $ p'(\Fix(\alpha\circ \tilde{f}))$ are fixed point classes for different maps on the nilmanifold $\Lambda\backslash G$. Now, by labeling the possibly empty fixed point sets, we can say that there is a one-to-one relation between the sets $[\alpha]_\Lambda$ and the fixed point classes $p'(\Fix(\alpha\circ \tilde{f}))$ of liftings of $f$ to $\Lambda\backslash G$. Let us denote the set of $\sim_\Lambda$-equivalence classes by $\mathcal{R}_\Lambda(f)$ and the set of Reidemeister classes of $f$ by $\mathcal{R}(f)$. As $\Lambda\leq \Gamma$, we know that the map $$\Psi:\mathcal{R}_\Lambda(f)\to \mathcal{R}(f): [\alpha]_\Lambda\mapsto [\alpha]_\Gamma$$is a well-defined function, which is clearly surjective. We also know that $p(\Fix(\alpha\circ \tilde{f}))$ is essential if and only if $p'(\Fix(\alpha\circ \tilde{f}))$ is essential, because the fixed point index is a local property and $\overline{p}$ is a local homeomorphism. Hence, when $[\alpha]_\Gamma$ is (in)essential, then every element in $\Psi^{-1}([\alpha]_\Gamma)$ corresponds to an (in)essential fixed point class of a lift of $f$ to $\Lambda\backslash G$. Because of this property and the fact that $\Psi$ is surjective and well-defined, we know that \begin{equation}\label{ineq}
N_A(f)\leq \sum_{\begin{subarray}{c}
\overline{(b,B)}\in \Gamma / \Lambda \\ B\sim_f A
\end{subarray}}N(\overline{(b,B)}\circ \overline{f}).
\end{equation}

\medskip

Suppose that $\mathbb{F}$ is a fixed point class of the desired form. Then $$\mathbb{F}=p(\Fix((b,B)\circ \tilde{f})=[(b,B)]_\Gamma,$$with $B\sim_f A$.
When $\F$ is an inessential fixed point class, $[(b,B)]_\Lambda$ corresponds to an inessential fixed point class of the map $\overline{(b,B)}\circ \overline{f}$ on the nilmanifold $\Lambda\backslash G$. Due to the main result from \cite{anos85-1} or \cite{fh86-1}, we now know that every fixed point class of $\overline{(b,B)}\circ \overline{f}$ is inessential, so that $N(\overline{(b,B)}\circ \overline{f})=0$. This also means that $\det(I-B_\ast D_\ast) =0$. By Lemma \ref{lemdet} we know that $\det(I-C_\ast D_\ast)=0$ for all $C\sim_f B$, or equivalently, for all $C\sim_f A$. This also means that $N(\overline{(c,C)}\circ \overline{f})=0$. By definition, $N_A(f)$ is a non-negative integer and hence, it follows by inequality (\ref{ineq}) that $$N_A(f)=0=\frac{1}{\#F}\sum_{B\sim_f A}|\det(I-B_\ast D_\ast)|.$$

Now, suppose that $\F=[(b,B)]_\Gamma$ is an essential fixed point class. The fact that$$N_A(f)\leq \sum_{\begin{subarray}{c}
\overline{(b,B)}\in \Gamma / \Lambda \\ B\sim_f A
\end{subarray}}N(\overline{(b,B)}\circ \overline{f})$$is not necessarily an equality comes from the fact that $\Psi$ is not injective. This is due to situations where$$[(b,B)]_\Gamma=[(c,C)]_\Gamma \text{, while } [(b,B)]_\Lambda\neq [(c,C)]_\Lambda.$$So, in order to find $N_A(f)$, we need to find the number of elements in $\mathcal{R}_\Lambda(f)$ which are mapped to the same element of $\mathcal{R}(f)$ by $\Psi$. First, we will show that this number has $\left|\Gamma/ \Lambda\right|$ as an upper bound. Suppose that $\overline{\gamma}_1=\overline{\gamma}_2 \in \Gamma/ \Lambda$, then $\gamma_2=\lambda\circ \gamma_1$, for $\lambda\in \Lambda$. If $$(c_1,C_1)=\gamma_1 \circ (b,B) \circ f_*(\gamma_1^{-1}) \textrm{ and } (c_2,C_2)=\gamma_2 \circ (b,B) \circ f_*(\gamma_2^{-1}),$$then an easy computation shows that $$(c_2,C_2)=\lambda\circ(c_1,C_1)\circ f_*(\lambda^{-1}),$$which means that  $[(c_1,C_1)]_\Lambda= [(c_2,C_2)]_\Lambda$.

\medskip

Now we will show that this upper bound is always attained by showing that $[(c_1,C_1)]_\Lambda= [(c_2,C_2)]_\Lambda$ implies that $\overline{\gamma}_1=\overline{\gamma}_2$ in $\Gamma / \Lambda$. Let $(d,D)$ be an affine homotopy lift of $f$. Suppose there exist a $\lambda \in \Lambda$, such that $$(c_1,C_1)=\gamma_1\circ (b,B)\circ  f_*(\gamma_1^{-1})=\lambda\circ (\gamma_2\circ (b,B)\circ  f_*(\gamma_2^{-1}))\circ f_*(\lambda^{-1})=\lambda\circ (c_2,C_2)\circ f_*(\lambda^{-1}).$$As an easy consequence, $$(b,B)=(\gamma_1^{-1}\circ \lambda\circ \gamma_2)\circ (b,B)\circ f_*(\gamma_1^{-1}\circ \lambda\circ \gamma_2)^{-1}.$$Note that $p(\Fix((b,B)\circ \tilde{f}))$ is an essential fixed point class and therefore $p(\Fix((b,B)\circ (d,D)))$ will also be an essential fixed point class. Hence, there exists an $x\in G$ such that $(b,B)\circ (d,D)(x)=x$. This also means that $$(\gamma_1^{-1}\circ \lambda\circ \gamma_2)\circ (b,B)\circ f_*(\gamma_1^{-1}\circ \lambda\circ \gamma_2)^{-1}\circ (d,D) (x)=x,$$which implies that $(\gamma_1^{-1}\circ \lambda\circ \gamma_2)^{-1}\cdot x$ is also in $p(\Fix((b,B)\circ (d,D)))$. By Proposition \ref{propaff}, we know that such a fixed point class is a singleton and hence, $(\gamma_1^{-1}\circ \lambda\circ \gamma_2)^{-1}\cdot x=x$. By the free action of $\Gamma$ on $G$, this implies that $\lambda\circ \gamma_2=\gamma_1$.

\medskip

Now, we know that $\Psi$ maps $\left|\Gamma/ \Lambda\right|$ different elements of $\mathcal{R}_\Lambda(f)$ to the element $[(b,B)]_\Gamma$. As $\F$ was chosen arbitrarily, we know that this holds for every essential fixed point class $[(c,C)]_\Gamma$ in $\mathcal{R}(f)$, for which $C\sim_f A$. So, this means that $$N_A(f)=\frac{1}{[\Gamma:\Lambda]}\sum_{\begin{subarray}{c}
\overline{(b,B)}\in \Gamma / \Lambda \\ B\sim_f A
\end{subarray}}N(\overline{(b,B)}\circ \overline{f}).$$In a similar way as in the proof of Theorem 3.4 in \cite{ll06-1}, we can now derive that$$N_A(f)=\frac{1}{\#F}\sum_{B\sim_f A}|\det(I-B_\ast D_\ast)|.$$
\end{proof}

\begin{remark}\label{remark essential of inessential}
During the proof of this theorem, we actually also proved that the fixed point class $p(\Fix((a,A)\circ \tilde{f})) $ is essential if and only if $\det(I-A_\ast D_\ast)\neq 0$. This is due to the fact that $p(\Fix((a,A)\circ \tilde{f}))$ can be lifted to a fixed point class $p'(\Fix((a,A)\circ \tilde{f}))$ with the same index. As this is a fixed point class for a map on a nilmanifold, we can use the result from \cite{anos85-1} or \cite{fh86-1}, which tells us that $p'(\Fix((a,A)\circ \tilde{f})) $ is essential if and only if $\det(I-A_\ast D_\ast)\neq 0$.
\end{remark}

It might be noteworthy to mention that a fixed point class $p(\Fix((b,B)\circ \tilde{f}))$ can be written as $p(\Fix((a,A)\circ \tilde{f}))$ if and only if $A\sim_f B$. So, in a certain sense, it is justified to say that $N_A(f)$ is the number of essential fixed point classes \textbf{above the $\sim_f$-equivalence class of $A$}. We will denote this equivalence class by $[A]$. Also note that every fixed point class above $A$ is simultaneously essential or inessential. Hence, it makes sense to talk about the (in)essential $\sim_f$-equivalence class $[A]$. We can also generalize these notions when considering $\sim_{f^{k}}$-equivalence classes of $A$. For these classes, we will use the notation $[A]_k$.

\medskip

Some easy corollaries of Theorem \ref{thmNAf} are the following.

\begin{corollary}
$$N_A(f)=\frac{\#\{B\in F\|B\sim_{f} A\}}{\# F}\cdot |\det(I-A_\ast D_\ast)|.$$
\end{corollary}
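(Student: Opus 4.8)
The plan is to read off this corollary directly from Theorem~\ref{thmNAf} together with Lemma~\ref{lemdet}. Theorem~\ref{thmNAf} gives
\[
N_A(f)=\frac{1}{\#F}\sum_{B\sim_f A}|\det(I-B_\ast D_\ast)|,
\]
so the only thing that needs to be observed is that every term in this sum over the $\sim_f$-equivalence class of $A$ is the \emph{same} number, namely $|\det(I-A_\ast D_\ast)|$. This is exactly the content of Lemma~\ref{lemdet}: if $B\sim_f A$ then $\det(I-B_\ast D_\ast)=\det(I-A_\ast D_\ast)$, hence also the absolute values agree.

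First I would invoke Theorem~\ref{thmNAf} to write $N_A(f)$ as the normalized sum above. Then, since the index set $\{B\in F \mid B\sim_f A\}$ is precisely the $\sim_f$-equivalence class $[A]$ (by definition of the $\sim_f$-relation restricted to $F$), and since by Lemma~\ref{lemdet} the summand $|\det(I-B_\ast D_\ast)|$ is constant over this set and equal to $|\det(I-A_\ast D_\ast)|$, the sum collapses to
\[
N_A(f)=\frac{1}{\#F}\,\#\{B\in F \mid B\sim_f A\}\cdot|\det(I-A_\ast D_\ast)|,
\]
which is the claimed formula.

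There is essentially no obstacle here: this is a formal manipulation and the corollary is really a repackaging of Theorem~\ref{thmNAf} in the case where one wants to isolate the "size of the equivalence class" factor. The one point deserving a word of care is making sure the summation in Theorem~\ref{thmNAf} is genuinely indexed by elements of $F$ (as opposed to cosets in $\Gamma/\Lambda$), so that "$\#\{B\in F \mid B\sim_f A\}$" is the correct multiplicity; this is already how Theorem~\ref{thmNAf} is stated, so no extra work is needed. Hence the corollary follows immediately.
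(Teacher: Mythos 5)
Your argument is exactly the paper's: the paper proves this corollary by combining Lemma~\ref{lemdet} (constancy of $\det(I-B_\ast D_\ast)$ over a $\sim_f$-class) with Theorem~\ref{thmNAf}, which is precisely what you do. Correct and identical in substance, just written out in more detail.
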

\begin{proof}
This follows easily by combining Lemma~\ref{lemdet} and Theorem~\ref{thmNAf}.
\end{proof}

\begin{corollary}
If all elements of $F$ are in the same $\sim_f$-equivalence class, then $$N(f)=N_A(f)=|\det(I-A_\ast D_\ast)|=|\det(I- D_\ast)|=|L(f)|.$$
\end{corollary}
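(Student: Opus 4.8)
The plan is to derive everything directly from Theorem~\ref{thmNAf} and its corollary together with the Nielsen number formula in Theorem~\ref{LeeForm}. First I would observe that the hypothesis ``all elements of $F$ are in the same $\sim_f$-equivalence class'' means $[A]=F$ for every $A\in F$, so in particular $\#\{B\in F \| B\sim_f A\}=\#F$. Plugging this into the previous corollary gives immediately
\[
N_A(f)=\frac{\#F}{\#F}\cdot|\det(I-A_\ast D_\ast)|=|\det(I-A_\ast D_\ast)|,
\]
and since by Lemma~\ref{lemdet} the quantity $|\det(I-A_\ast D_\ast)|$ does not depend on the choice of $A\in F$ (all of $F$ being a single $\sim_f$-class), this common value is well defined.

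Next I would account for all fixed point classes: every fixed point class of $f$ is of the form $p(\Fix((a,A)\circ\tilde f))$ for some $A\in F$, and by the remark following Theorem~\ref{thmNAf} two such classes coincide exactly when the corresponding holonomy parts are $\sim_f$-equivalent. Under our hypothesis there is therefore only one $\sim_f$-equivalence class, so $N(f)=\sum_{[A]} N_A(f)=N_A(f)$, which yields $N(f)=N_A(f)=|\det(I-A_\ast D_\ast)|$.

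It then remains to identify this with $|\det(I-D_\ast)|$ and with $|L(f)|$. For the first, I would take $A=\Id\in F$ (the identity automorphism always lies in $F$), so $A_\ast=\Id$ and $|\det(I-A_\ast D_\ast)|=|\det(I-D_\ast)|$; this is legitimate precisely because the value is independent of $A$. For the last equality, I would apply Theorem~\ref{LeeForm}: since all the summands $\det(I-A_\ast D_\ast)$ are equal up to sign — in fact, by Lemma~\ref{Lemma Bram}(1) applied to the family $\{A_\ast D_\ast : A\in F\}$ (the hypothesis $DA=BD$ being automatic here as $F$ is the holonomy group) they are literally all equal, not merely equal in absolute value — the averaged sum for $L(f)$ collapses to the single term $\det(I-D_\ast)$, whence $|L(f)|=|\det(I-D_\ast)|=N(f)$.

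I do not expect any serious obstacle: the statement is a direct bookkeeping consequence of results already established. The only point requiring a moment's care is the claim that the determinants $\det(I-A_\ast D_\ast)$ all coincide (not just in absolute value) so that the Lefschetz sum telescopes to $\det(I-D_\ast)$; this is exactly what Lemma~\ref{lemdet} combined with the sign-tracking in Lemma~\ref{Lemma Bram}(1) provides, once one checks that $F$ being the holonomy group forces, for each $A\in F$, the existence of $B\in F$ with $DA=BD$ — which follows from the defining relation $f_*(\gamma)(d,D)=(d,D)\gamma$ restricted to elements of $\Gamma$ projecting to $F$.
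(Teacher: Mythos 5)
Your argument is essentially correct, and since the paper states this corollary without proof the comparison is to the natural route rather than to an explicit argument in the text. The main chain — apply the immediately preceding corollary to get $N_A(f)=|\det(I-A_\ast D_\ast)|$, observe that there is a single $\sim_f$-equivalence class so $N(f)=N_A(f)$, specialize to $A=\Id$, and then collapse the Lefschetz sum — is sound.

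One point of imprecision: you appeal to Lemma~\ref{Lemma Bram}(1) to upgrade the identity of the summands $\det(I-A_\ast D_\ast)$ from ``equal in absolute value'' to ``literally equal.'' That lemma is not the right tool here, for two reasons. First, it only asserts equality of determinants along the sequence $(A_j)$ generated from a fixed $A_1$ by $DA_i=A_{i+1}D$; that sequence need not exhaust $F$ (indeed if $D$ commutes with $F$ the sequence is constant), so the lemma does not by itself yield equality for \emph{all} pairs in $F$. Second, and more to the point, no sign-tracking is needed at all: Lemma~\ref{lemdet} already asserts $\det(I-A_\ast D_\ast)=\det(I-B_\ast D_\ast)$ (not merely $|\cdot|$-equality) whenever $A\sim_f B$, since $\sim_f$-equivalence yields a conjugacy $C A_\ast D_\ast C^{-1}=B_\ast D_\ast$. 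Under the hypothesis that all of $F$ is a single $\sim_f$-class, Lemma~\ref{lemdet} alone gives that every summand in the Lefschetz formula of Theorem~\ref{LeeForm} equals $\det(I-D_\ast)$, and hence $|L(f)|=|\det(I-D_\ast)|=N(f)$. Your conclusion is correct; you just should drop the appeal to Lemma~\ref{Lemma Bram}(1) and rely on Lemma~\ref{lemdet} directly.
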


This condition is for example satisfied when $f_*:\Gamma\to \Gamma$ maps every element into $\Gamma\cap G$. If this is the case, then $f_*$ induces the trivial morphism on the holonomy group, from which it follows easily that all elements in $F$ are in the same $\sim_f$-equivalence class.

\begin{corollary}
If every $\sim_{f}$-equivalence class in $F$ consists of a single element, then for all $A\in F$, $\det(I-A_\ast D_\ast)$ will be divisible by $\#F$.
\end{corollary}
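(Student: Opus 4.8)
The plan is to read this off directly from Theorem~\ref{thmNAf}, or more conveniently from the corollary stated just before it, namely
$$N_A(f)=\frac{\#\{B\in F\mid B\sim_{f} A\}}{\# F}\cdot |\det(I-A_\ast D_\ast)|,$$
valid for every $A\in F$. First I would feed in the hypothesis: if every $\sim_f$-equivalence class of $F$ is a singleton, then $\#\{B\in F\mid B\sim_f A\}=1$, so the displayed identity collapses to
$$\#F\cdot N_A(f)=|\det(I-A_\ast D_\ast)|.$$

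Next I would invoke the fact that $N_A(f)$ is, by its very definition in Theorem~\ref{thmNAf}, a count of essential fixed point classes of a prescribed shape, hence a non-negative integer. Consequently the identity above exhibits $|\det(I-A_\ast D_\ast)|$ as a non-negative integer multiple of $\#F$. Since $A_\ast$ and $D_\ast$ are real linear maps on the Lie algebra $\lie$, the quantity $\det(I-A_\ast D_\ast)$ is a real number equal to $\pm|\det(I-A_\ast D_\ast)|=\pm\,\#F\cdot N_A(f)$, so it is itself divisible by $\#F$; in the degenerate case $\det(I-A_\ast D_\ast)=0$ the divisibility is trivial, and the content lies entirely in the case where the class $[A]$ is essential.

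I do not expect a genuine obstacle here: the statement is a formal consequence of the class-by-class splitting of the Nielsen number (Theorem~\ref{thmNAf}) together with the integrality of the counts $N_A(f)$. The only point worth writing out carefully is the last, essentially tautological, step that upgrades divisibility of $|\det(I-A_\ast D_\ast)|$ by $\#F$ to divisibility of $\det(I-A_\ast D_\ast)$ itself.
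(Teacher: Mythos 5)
Your argument is correct and is exactly the intended one: the paper leaves this corollary unproved precisely because it drops out of the preceding corollary $N_A(f)=\tfrac{\#\{B\sim_f A\}}{\#F}\,|\det(I-A_\ast D_\ast)|$ once the singleton hypothesis forces the numerator to be $1$, together with the integrality of $N_A(f)$. No gap.
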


This is for example the case when $f_*$ induces the identity morphism on $F$, while $F$ itself is an abelian group. For instance, in Example \ref{example Z6}, in section \ref{examples}.

\subsection{$\sim_f$-equivalence classes on different levels}

\begin{definition}\label{definition boosting equivalence classes}
Let $A\in F$ be an element of the holonomy group $F$ of $\Gamma\backslash G$. Let $k|n$. Then we define $\gamma_{nk}(A)$ to be the following subset of $F$:$$\{C\in F \| \textrm{ there exists } (a,A)\in \Gamma, \textrm{such that } (c,C)=\gamma_{nk}(a,A)=(a,A)f_*^k(a,A)\dots f_*^{n-k}(a,A)\}.$$
\end{definition}

With this definition in mind, we can prove the following lemma.

\begin{lemma}\label{lemma boosting funct on f-conjugacy classes}
If $A\sim_{f^k} B$, then for any $ C \in \gamma_{nk}(A)$, there exists a $D\in \gamma_{nk}(B)$ such that $C \sim_{f^n} D$.
\end{lemma}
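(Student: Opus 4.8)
The plan is to unwind the definitions of $\sim_{f^k}$-conjugacy and of the boosting set $\gamma_{nk}$, and show that a single conjugating element $\gamma\in\Gamma$ that witnesses $A\sim_{f^k}B$ also witnesses $C\sim_{f^n}D$ for an appropriate choice of $D\in\gamma_{nk}(B)$. First I would fix an element $(a,A)\in\Gamma$ realizing the given $C\in\gamma_{nk}(A)$, so that $(c,C)=(a,A)\,f_*^k(a,A)\cdots f_*^{(n/k-1)k}(a,A)$. By the hypothesis $A\sim_{f^k}B$ together with Lemma \ref{lemma alternatief definitie}, there exist $\gamma\in\Gamma$ and $(b,B)\in\Gamma$ with $\gamma\circ(a,A)\circ f_*^k(\gamma^{-1})=(b,B)$.

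The key computation is then to apply the boosting product to this relation. Writing $N=n/k$, I would compute
\[
\gamma\circ(c,C)\circ f_*^n(\gamma^{-1})
=\gamma\circ\Big(\prod_{j=0}^{N-1} f_*^{jk}(a,A)\Big)\circ f_*^{Nk}(\gamma^{-1}),
\]
and insert $f_*^{jk}(\gamma^{-1})\circ f_*^{jk}(\gamma)=\Id$ between consecutive factors; since $f_*^{jk}$ is a homomorphism, $f_*^{jk}(\gamma)\circ f_*^{jk}(a,A)\circ f_*^{jk}(f_*^k(\gamma^{-1}))=f_*^{jk}\big(\gamma\circ(a,A)\circ f_*^k(\gamma^{-1})\big)=f_*^{jk}(b,B)$. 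Telescoping gives $\gamma\circ(c,C)\circ f_*^n(\gamma^{-1})=\prod_{j=0}^{N-1} f_*^{jk}(b,B)=(d,D)$, say. Now $(b,B)\in\Gamma$, so $(d,D)=\gamma_{nk}(b,B)\in\Gamma$ and $D\in\gamma_{nk}(B)$ by Definition \ref{definition boosting equivalence classes}; and the displayed identity says exactly $\gamma\circ(c,C)\circ f_*^n(\gamma^{-1})=(d,D)$ with $(c,C),(d,D),\gamma\in\Gamma$, which is the definition of $C\sim_{f^n}D$.

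The only subtle point — and the one I would be most careful about — is the bookkeeping of the indices in the telescoping sum: one must check that the factor $f_*^{(N-1)k}(\gamma^{-1})$ absorbed at the right end of the product equals $f_*^{Nk}(\gamma^{-1})=f_*^n(\gamma^{-1})$ after the final insertion, i.e. that the cancellations line up so that exactly one $\gamma$ survives on the left and exactly one $f_*^n(\gamma^{-1})$ on the right. A clean way to organize this is to prove by induction on $N$ that $\gamma\circ\big(\prod_{j=0}^{N-1}f_*^{jk}(a,A)\big)\circ f_*^{Nk}(\gamma^{-1})=\prod_{j=0}^{N-1}f_*^{jk}(b,B)$, the inductive step being a single application of the $N=1$ case ($A\sim_{f^k}B$) conjugated by $f_*^{(N-1)k}$. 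Everything else is formal manipulation inside $\aff(G)$, using only that $f_*$ and its iterates are homomorphisms and that $\Gamma$ is closed under the operations involved; no geometry or fixed-point theory is needed here.
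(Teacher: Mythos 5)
Your proof is correct and follows the same route as the paper: both invoke Lemma \ref{lemma alternatief definitie} to obtain a single conjugating element $\gamma$ that works for the given $(a,A)$, and then perform the same telescoping computation in $\aff(G)$, showing that $\gamma\circ(c,C)\circ f_*^n(\gamma^{-1})=(b,B)f_*^k(b,B)\cdots f_*^{n-k}(b,B)=(d,D)\in\gamma_{nk}(B)$. The paper merely phrases the calculation in the opposite direction (starting from $(d,D)$ and arriving at $\gamma\circ(c,C)\circ f_*^n(\gamma^{-1})$) and compresses what you carefully spell out as the telescoping/induction step into ``a simple computation.''
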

\begin{proof}
By Lemma \ref{lemma alternatief definitie}, for any $(a,A) \in \Gamma$, there exist $(b,B), \gamma \in \Gamma$, such that $$\gamma\circ (a,A)\circ f_*^k(\gamma^{-1})=(b,B).$$Because we picked $(a,A)$ arbitrarily, $C$, with $(c,C)=\gamma_{nk}(a,A)$, is also chosen arbitrarily in the set $\gamma_{nk}(A)$. Now, the following $(d,D)$ fulfills the necessary conditions:$$(d,D)=(b,B)f_*^k(b,B)\dots f_*^{n-k}(b,B).$$Indeed, by using the relation $\gamma\circ (a,A)\circ f_*^k(\gamma^{-1})=(b,B)$, we see$$(d,D)=(\gamma\circ (a,A)\circ f_*^k(\gamma^{-1}))f_*^k(\gamma\circ (a,A)\circ f_*^k(\gamma^{-1}))\dots f_*^{n-k}(\gamma\circ (a,A)\circ f_*^k(\gamma^{-1})).$$Since $f_*$ is a morphism, a simple computation shows that $$(d,D)=\gamma\circ (c,C) \circ f_*^n(\gamma^{-1}).$$
\end{proof}

With Definition \ref{definition boosting equivalence classes}, we actually try to define boosting functions in terms of $\sim_f$-equivalence classes. This might not necessarily be well-defined, in the sense that it might happen that for $A \sim_{f^k} B$, not every element in $\gamma_{nk}(A)$ is automatically $\sim_{f^n}$-conjugated with every element in $\gamma_{nk}(B)$. Note that Lemma \ref{lemma boosting funct on f-conjugacy classes} tells us that this would be the case if every element in $\gamma_{nk}(A)$ is in the same $\sim_{f^n}$-equivalence class. Because of Corollary \ref{corollary D invertible}, we know that $\gamma_{nk}(A)$ will be a singleton whenever $D$ is invertible, so in that case, these boosting functions are well-defined on the equivalence classes. Note that it might also not necessarily be true that a  $\gamma_{nk}([A]_k)$, by which we mean the set $$\bigcup_{B\sim_{f^{k}}A}\gamma_{nk}(B),$$is a full $\sim_{f^n}$-equivalence class.
\medskip

Although boosting functions might generally not behave well on $\sim_{f^k}$-equivalence classes, we can still use them as a tool for the computation of $NF_n(f)$. The reason for this is the fact that two fixed point classes above the same equivalence class $[A]$ will boost in the exact same way. So, if we know how one fixed point class behaves, every fixed point class above the same equivalence class will behave in the same way.

\medskip

The following lemma is actually all we need to prove the statement above.
\begin{lemma}\label{lemma determinants for boosted}
Let $(d,D)$ be an affine homotopy lift of $f$. If $B, C\in \gamma_{nk}(A)$, then $$\det(I-B_\ast D_\ast^n)=\det(I-C_\ast D_\ast^n).$$
\end{lemma}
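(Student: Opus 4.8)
The plan is to trace both $B$ and $C$ back to elements of $\Gamma$ of the form $\gamma_{nk}(a,A)$ and use the defining relation $\tilde{f}_*(\gamma)(d,D)=(d,D)\gamma$ of the affine homotopy lift to transfer the computation to the level of automorphisms of $G$. First I would write $(b,B)=\gamma_{nk}(a_1,A)$ and $(c,C)=\gamma_{nk}(a_2,A)$ for suitable $(a_1,A),(a_2,A)\in\Gamma$; since both $(a_1,A)$ and $(a_2,A)$ project to the same element $A$ of $F$, they differ by an element of $\Gamma\cap G$, i.e.\ $(a_2,A)=(a_1,A)\circ(g,\Id)$ with $(g,\Id)\in\Gamma\cap G$. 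The key observation, analogous to the computation in Lemma~\ref{lemma f-equivalentieklassen} and to the reasoning behind Lemma~\ref{lemma boosting funct on f-conjugacy classes}, is that composing with the affine homotopy lift $(d,D)$ turns the ``boosted'' word into a conjugate: one has
\[
(b,B)\circ(d,D)^{n/k} \;=\; (a_1,A)\,f_*^k(a_1,A)\cdots f_*^{n-k}(a_1,A)\circ (d,D)^{n/k},
\]
and repeatedly using $f_*^{jk}(a_1,A)\circ(d,D)=(d,D)\circ(\text{shift of }(a_1,A))$ one rewrites this so that on the automorphism level $B D^n$ equals $A D^{\,n/k}$ conjugated appropriately, and likewise for $C$.

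More concretely, I would push everything through the differential. Composing the identity $(b,B)=\gamma_{nk}(a_1,A)$ with $(d,D)^{n/k}$ and using $\tilde f_*(\gamma)(d,D)=(d,D)\gamma$ for $\gamma=(a_1,A)$ gives a relation of the form $(b,B)(d,D)^{n/k}=\delta\,(d,D)^{n/k}\,\delta^{-1}$ for a suitable $\delta\in\Aff(G)$ built out of $(a_1,A)$ and $(d,D)$; taking the $\Aut(G)$-component yields $BD^n = E\,D^{n}\,E^{-1}$ with $E=p(\delta)$, hence on Lie algebras $B_\ast D_\ast^n = E_\ast D_\ast^n E_\ast^{-1}$, so $\det(I-B_\ast D_\ast^n)=\det(I-D_\ast^n)$. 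Doing the identical computation for $(c,C)=\gamma_{nk}(a_2,A)$ gives $\det(I-C_\ast D_\ast^n)=\det(I-D_\ast^n)$ as well, and the two are equal. Alternatively, and perhaps more cleanly, I would apply Lemma~\ref{Lemma Bram}(1): the map $D\mapsto$ (conjugation) on $F$ satisfies the hypothesis $D_\ast A_\ast = B_\ast D_\ast$ with $B$ ranging over $F$ (this is exactly the almost-Bieberbach condition), so if $B$ and $C$ both arise as $\gamma_{nk}(a,A)$ for the same $A$ they occur at the same position of such a $D_\ast$-conjugation chain for the element $A$, and part (1) of that lemma gives equality of the determinants directly.

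The main obstacle I expect is bookkeeping: making precise the claim that $\gamma_{nk}(a_1,A)$ and $\gamma_{nk}(a_2,A)$, after composition with $(d,D)^{n/k}$, become conjugate \emph{by the same type of element}, and that the conjugating automorphism lies in $\Aut(G)$ so that passing to the differential is legitimate. The subtlety is that $D$ need not be invertible, so I cannot simply ``divide by $D$''; instead the argument must stay at the level of the semigroup $\aff(G)$ and only extract the $\Aut(G)$-component at the very end, exactly as is done in the proof of Theorem~\ref{thmNAf} and in Lemma~\ref{lemma boosting funct on f-conjugacy classes}. Once the conjugacy $B_\ast D_\ast^n \sim C_\ast D_\ast^n$ (or both $\sim D_\ast^n$) is established, the equality of determinants is immediate and the proof is complete.
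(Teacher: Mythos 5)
Your plan correctly isolates the right identity to use — the relation $(\alpha\circ\tilde f^k)^{n/k}=\gamma_{nk}(\alpha)\circ\tilde f^n$ realised with the affine homotopy lift in place of $\tilde f$ — but the way you process it introduces a genuine error. After composing with the correct power of $(d,D)$, the identity reads
\[
(b,B)\circ(d,D)^{n}\;=\;\bigl((a_1,A)\circ(d,D)^{k}\bigr)^{n/k},
\]
a \emph{power} of the single element $(a_1,A)(d,D)^k$, not a conjugate of $(d,D)^{n/k}$ (you also use the wrong exponent $n/k$ instead of $n$ on the left). From this mistaken conjugation picture you infer $B_\ast D_\ast^n = E_\ast D_\ast^n E_\ast^{-1}$ and hence $\det(I-B_\ast D_\ast^n)=\det(I-D_\ast^n)$. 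That conclusion is false in general: if it held, then already at $n=k=1$ every determinant $\det(I-A_\ast D_\ast)$ appearing in the Lee--Lee formula of Theorem~\ref{LeeForm} would equal $\det(I-D_\ast)$, so every infra-nilmanifold would behave like a nilmanifold and, e.g., the non--weakly-Jiang maps in Example~\ref{Example not weakly Jiang} could not exist. The fallback via Lemma~\ref{Lemma Bram}(1) is also not justified as stated: that lemma compares determinants along a single $D$-conjugation chain, and you have given no reason why $B$ and $C$ should occupy ``the same position'' of such a chain, nor is that the mechanism here.

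The actual argument is both simpler and more direct than a conjugacy statement. Reading off the $\Aut(G)$-component of the displayed identity gives $BD^{n}=(AD^{k})^{n/k}$, and the same computation with $(a_2,A)$ gives $CD^{n}=(AD^{k})^{n/k}$. Thus $BD^{n}$ and $CD^{n}$ are \emph{literally equal} in $\Endo(G)$ (the translational ambiguity in the choice of $(a_i,A)$ only affects the $G$-part, not the $\Aut(G)$-part), hence $B_\ast D_\ast^{n}=C_\ast D_\ast^{n}$ and the determinants coincide trivially. No conjugation, no invertibility of $D$, and no appeal to Lemma~\ref{Lemma Bram} is needed.
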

\begin{proof}
Since there exist $(a_1,A)$ and $(a_2,A)$, such that $\gamma_{nk}(a_1,A)=(b,B)$ and $\gamma_{nk}(a_2,A)=(c,C)$, for certain $b,c\in G$, we know that $$((a_1,A)(d,D)^k)^{\frac{n}{k}}=(b,B)(d,D)^n \textrm{ and }((a_2,A)(d,D)^k)^{\frac{n}{k}}=(c,C)(d,D)^n.$$So, by just looking at the rotational part, we see $$BD^n=(AD^k)^{\frac{n}{k}}=CD^n.$$By taking the differential, we obtain the desired result.
\end{proof}

\begin{proposition}\label{prop similar boosting}
Suppose $p(\Fix((a,A)\circ \tilde{f}^k))$ is a fixed point class at level $k$ of a continuous map $f$ on an infra-nilmanifold. If this fixed point class boosts (in)essentially to level $n$, then every fixed point class of the form $p(\Fix((b,B)\circ \tilde{f}^k))$, with $B\in [A]_k$ also boosts (in)essentially to level $n$.
\end{proposition}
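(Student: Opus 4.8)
The plan is to reduce the statement to a computation about determinants, using Remark~\ref{remark essential of inessential} to translate ``(in)essential'' into ``$\det(I - (\cdot)_\ast D_\ast^n)$ (non)zero'', and then apply Lemma~\ref{lemma boosting funct on f-conjugacy classes} together with Lemma~\ref{lemma determinants for boosted} and Lemma~\ref{lemdet} to compare the relevant determinants on level $n$.

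First I would fix an affine homotopy lift $(d,D)$ of $f$. By Remark~\ref{remark essential of inessential} applied to the map $f^n$ (whose affine homotopy lift is $(d,D)^n$, so that its rotational differential is $D_\ast^n$), the class $p(\Fix((a,A)f_\ast^{k}(a,A)\cdots f_\ast^{n-k}(a,A)\circ \tilde f^n))$ obtained by boosting $p(\Fix((a,A)\circ\tilde f^k))$ to level $n$ is essential if and only if $\det(I - C_\ast D_\ast^n)\neq 0$, where $(c,C)=\gamma_{nk}(a,A)$. So the hypothesis ``$p(\Fix((a,A)\circ\tilde f^k))$ boosts (in)essentially to level $n$'' is exactly the statement that $\det(I-C_\ast D_\ast^n)$ is (zero) nonzero for this particular $C\in\gamma_{nk}(A)$.

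Next, take any $B\in[A]_k$, i.e.\ $B\sim_{f^k}A$, and consider the boosted class $p(\Fix((b,B)f_\ast^{k}(b,B)\cdots f_\ast^{n-k}(b,B)\circ\tilde f^n))$, whose rotational part is some $E\in\gamma_{nk}(B)$. By Lemma~\ref{lemma boosting funct on f-conjugacy classes}, there is some $D'\in\gamma_{nk}(B)$ with $C\sim_{f^n} D'$; by Lemma~\ref{lemdet} (applied to the map $f^n$) this gives $\det(I-C_\ast D_\ast^n)=\det(I-D'_\ast D_\ast^n)$. On the other hand $D'$ and $E$ both lie in $\gamma_{nk}(B)$, so Lemma~\ref{lemma determinants for boosted} yields $\det(I-D'_\ast D_\ast^n)=\det(I-E_\ast D_\ast^n)$. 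Chaining these equalities gives $\det(I-E_\ast D_\ast^n)=\det(I-C_\ast D_\ast^n)$, so by Remark~\ref{remark essential of inessential} the boosted class of $p(\Fix((b,B)\circ\tilde f^k))$ is essential if and only if the boosted class of $p(\Fix((a,A)\circ\tilde f^k))$ is, which is the claim.

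The only subtlety, and the step I would be most careful about, is bookkeeping with the non-uniqueness of the rotational parts: $\gamma_{nk}(A)$ need not be a singleton, so I must make sure that the ``boosted class'' I start with and the one I compare against are genuinely represented by elements of $\gamma_{nk}(A)$ and $\gamma_{nk}(B)$ respectively, and that Lemma~\ref{lemma determinants for boosted} is being invoked with both arguments in the same set $\gamma_{nk}(B)$. Once that is set up correctly, the argument is just the two-step determinant comparison above; there is no real analytic or topological obstacle, since all the homotopy-invariance and index-locality work has already been done in Theorem~\ref{thmNAf} and Remark~\ref{remark essential of inessential}.
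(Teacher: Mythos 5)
Your proof is correct and matches the paper's own argument step for step: fix the affine homotopy lift, translate the (in)essentiality of the boosted class into $\det(I - C_\ast D_\ast^n)$ via Remark~\ref{remark essential of inessential}, use Lemma~\ref{lemma boosting funct on f-conjugacy classes} to produce an auxiliary element $E_0\in\gamma_{nk}(B)$ that is $\sim_{f^n}$-conjugate to $C$, and then chain Lemma~\ref{lemdet} with Lemma~\ref{lemma determinants for boosted} to equate the determinants. The only cosmetic difference is your choice to name the auxiliary element $D'$, which risks confusion with the linear part $D$ of the affine lift; the paper's $E_0$ is a safer choice.
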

\begin{proof}
Suppose that $(d,D)$ is an affine homotopy lift of $f$. By Remark \ref{remark essential of inessential}, we know that $p(\Fix((a,A)\circ \tilde{f}^k))$ is an essential fixed point class if and only if $\det(I-A_\ast D_\ast^k)\neq 0$. Take an arbitrary fixed point class $p(\Fix((b,B)\circ \tilde{f}^k))$, with $B\in [A]_k$. Because of Lemma \ref{lemdet}, we already know that $\det(I-B_\ast D_\ast^k)\neq 0$ and that $p(\Fix((b,B)\circ \tilde{f}^k))$ is an essential fixed point class.

\medskip

Now suppose that $\gamma_{nk}(a,A)=(c,C)$ and $\gamma_{nk}(b,B)=(e,E)$. This means that $C\in \gamma_{nk}(A)$ and $E\in \gamma_{nk}(B)$. By Lemma \ref{lemma boosting funct on f-conjugacy classes}, we know that there exists $E_0\in \gamma_{nk}(B)$, such that $C\in [E_0]_n$. Lemma \ref{lemdet} now tells us that $$\det(I-C_\ast D_\ast^n)=\det(I-E_{0 \ast} D_\ast^n).$$By Lemma \ref{lemma determinants for boosted} and by the fact that $E,E_0\in \gamma_{nk}(B)$, we also know that $$\det(I-E_\ast D_\ast^n)=\det(I-E_{0 \ast} D_\ast^n).$$Because $\det(I-C_\ast D_\ast^n)=\det(I-E_\ast D_\ast^n)$, we know that $[(a,A)]_k$ boosts essentially to level $n$ if and only if $[(b,B)]_k$ boosts essentially to level $n$. A similar thing applies for inessential boosting.
\end{proof}

\subsection{Examples}\label{examples}

So, we know that every fixed point class above $[A]$ boosts in exactly the same way and we also know that the number of essential fixed point classes above $[A]$ equals $N_A(f)$. This is a tool we can use to compute $\# IIB_n(f)$ in a more efficient way. We show how this can be done by looking at a few examples.

\begin{example}
Let us first try to compute the Nielsen periodic numbers of the maps in Example~\ref{Example not weakly Jiang}. We will use the matrix description from \cite{duga14-1}.
Let the Klein bottle group be generated by the following two affine transformations:
$$\alpha=(a,A)=\left(\begin{pmatrix}
\frac{1}{2}\\
\frac{1}{2}\\
\end{pmatrix}, \begin{pmatrix}
1&0\\
0&-1
\end{pmatrix}\right) \textrm{ and } \beta=(e_2, \Id),$$where $e_2$ denotes the second element of the standard basis of $\R^2$. Suppose that $k\neq 1$ is odd and that $p\in \R$. Then, the map induced by $$(d,D)=\left(\begin{pmatrix}
p\\
\frac{1}{2}\\
\end{pmatrix}, \begin{pmatrix}
k&0\\
0&-1
\end{pmatrix}\right)$$will induce the same morphism described in Example \ref{Example not weakly Jiang}.

\medskip

It is clear that $D$ commutes with both $A$ and $\Id$, so $f_*$ induces the identity morphism on $F$ and therefore, every $\sim_{f^n}$-equivalence class consists of precisely one element. Also, it easy to compute that $[A]_l$ is essential if and only if $l$ is even, while $[\Id]_l$ is essential if and only if $l$ is odd. Now, suppose that $m=ql$, then a simple computation shows that $[\Id]_l$ always boosts to $[\Id]_m$. Also, $[A]_l$ will boost to $[A]_m$ if $q$ is odd and to $[\Id]_m$ if $q$ is even. The reason for this, lies in the following computation:$$(AD^l)^q=A^qD^{ql}=A^qD^m.$$All together, we see that every even boost ($q$ is even) of an essential fixed point class is inessential while every odd boost is essential.

\medskip

As a consequence, we see that if $n$ is odd, $IIB_n(f)$ is the empty set and by Proposition \ref{propIIB}, it follows that $$NF_n(f)=N(f^n).$$On the other hand, if $n$ is even, the only essential fixed point classes that boost inessentially to level $n$ pass through level $\frac{n}{2}$. Every essential fixed point class at this level will boost inessentially to level $n$ and every element in $IIB_{\frac{n}{2}}(f)$ will also boost inessentially to level $n$. Therefore: $$NF_n(f)=N(f^n)+NF_{\frac{n}{2}}(f).$$

For the sake of completeness, the case where $k=1$ also gives us a map on the Klein bottle. In this case, an easy computation shows that $N(f^n)=0$ for every $n$. As a consequence, every fixed point class at every level is inessential and hence $NF_n(f)=0$, for every integer $n$.
\end{example}

The following example will illustrate the use of $\sim_f$-equivalence classes a little more. 

\begin{example}\label{example Z3}
Let $\Gamma$ be the Bieberbach group with generators:$$(a,A)=\left(\begin{pmatrix}
0\\
0\\
\frac{1}{3}\\
\end{pmatrix}, \begin{pmatrix}
-1&1&0\\
 -1 & 0 & 0\\
 0&0&1 \\
\end{pmatrix}\right)
\textrm{, }(e_1, \Id)\textrm{ and }(e_2, \Id).$$In \cite{duga14-1} one can find that the affine map $$(d,D)=\left(\begin{pmatrix}
0\\
0\\
0\\
\end{pmatrix}, \begin{pmatrix}
0&1&0\\
 1 & 0 & 0\\
 0&0&2 \\
\end{pmatrix}\right)$$induces a continuous map on the flat manifold $\Gamma\backslash\R^3 $.

\medskip

An easy computation shows that $DA=A^2D$ and that $f_*$ induces a morphism $\overline{f}_*$ on the holonomy group $\Z_3$, such that $\overline{f}_*^2=\Id$. So, whenever $k$ is even, every $\sim_{f^k}$-equivalence class is a singleton.
\medskip

To determine $[\Id]_k$, with $k$ odd, note that $\overline{f}_*^k=\overline{f}_*$. Also, $\overline{f}_*(A)=A^2$ and $\overline{f}_*(A^2)=A$. So, the following are certainly subsets of $[\Id]_k$:
$$A\Id f_\#(A)^{-1}=\{A^2\} \textrm{ and } A^2\Id f_\#(A^2)^{-1}=\{A\}.$$Hence, $[\Id]_k=F$ for all odd $k$. 

\medskip

An easy computation shows that $[\Id]_k$, with $k$ odd, is always inessential. As a consequence, for every odd $n$, $$NF_n(f)=N(f^n)=0.$$When $k$ is even, $[\Id]_k$ is inessential, while $[A]_k$ and $[A^2]_k$ are essential. In this case, every element of $F$ commutes with $D^k$, as $\overline{f}_*^2$ is the identity morphism. Hence, the class $[A^i]_k$ boosts to the class $[A^{ip}]_{pk}$. So, an essential class can only boost inessentially if $p\equiv 0 \mod 3$. 

\medskip

In Figure \ref{fig1}, a scheme can be found where all these boosting relations are shown up to level $6$. In this scheme, inessential and essential fixed point classes are denoted by a circle and a square respectively. Only the boosting from an essential to an inessential class are drawn, since these are the only ones that need to be considered for the computation of $NF_n(f)$.

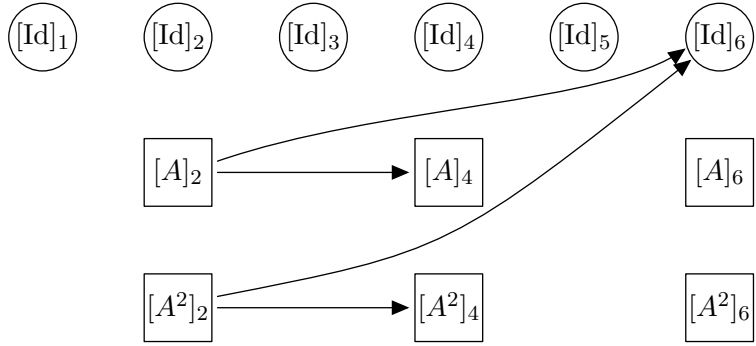
\begin{figure}

\centering
\begin{tikzpicture}[line cap=round,line width = .5pt,line join=round, >=triangle 45, x=1.5cm,y=1.5cm]

%cirkels
\draw[color=black](0,1.2) circle (0.3);
\draw[color=black](1.2,1.2) circle (0.3);
\draw[color=black](.9,-.9) rectangle (1.5,-1.5);
\draw[color=black](.9,-.3) rectangle (1.5,.3);
\draw[color=black](2.4,1.2) circle (0.3);
\draw[color=black](3.6,1.2) circle (0.3);
\draw[color=black](3.3,-.9) rectangle (3.9,-1.5);
\draw[color=black](3.3,-.3) rectangle (3.9,.3);
\draw[color=black](4.8,1.2) circle (0.3);
\draw[color=black](6,1.2) circle (0.3);
\draw[color=black](5.7,-.9) rectangle (6.3,-1.5);
\draw[color=black](5.7,-.3) rectangle (6.3,.3);

%tekst
\draw[color=black] (0,1.2) node {$[\Id]_1$};
\draw[color=black] (1.2,1.2) node {$[\Id]_2$};
\draw[color=black] (1.2,0) node {$[A]_2$};
\draw[color=black] (1.2,-1.2) node {$[A^2]_2$};
\draw[color=black] (2.4,1.2) node {$[\Id]_3$};
\draw[color=black] (3.6,1.2) node {$[\Id]_4$};
\draw[color=black] (3.6,0) node {$[A]_4$};
\draw[color=black] (3.6,-1.2) node {$[A^2]_4$};
\draw[color=black] (4.8,1.2) node {$[\Id]_5$};
\draw[color=black] (6,1.2) node {$[\Id]_6$};
\draw[color=black] (6,0) node {$[A]_6$};
\draw[color=black] (6,-1.2) node {$[A^2]_6$};

%lijnen
\draw[->,color=black] (1.55,0) -- (3.25,0);
\draw[->,color=black] (1.55,-1.2) -- (3.25,-1.2);
\draw[->,color=black] (1.55,.1) .. controls (3,.6) and (5,.6) .. (5.7,1.1);
\draw[->,color=black] (1.55,-1.1) .. controls (3.6,-.7) .. (5.75,1);
\end{tikzpicture}

\caption{A scheme of $\sim_{f^k}$-equivalence classes at different levels for Example \ref{example Z3}} \label{fig1}
\end{figure}

\medskip

So, suppose that $n=3^pq$ is even, such that $\gcd(3,q)=1$, then $$NF_n(f)=\sum_{i=0}^{p} N(f^{3^iq}).$$
\end{example}

By the following example, we clarify the use of Theorem \ref{thmNAf}.

\begin{example}\label{example Z6}
Let $\Gamma$ be the Bieberbach group with generators:
$$(a,A)=\left(\begin{pmatrix}
0\\
0\\
\frac{1}{6}
\end{pmatrix}, \begin{pmatrix}
1 & -1 &0\\
1& 0 &0 \\
0&0 & 1
\end{pmatrix}\right), (e_1,\Id)\textrm{ and }  (e_2,\Id).$$

In \cite{duga14-1}, one can find that the following affine map induces a map on the infra-nilmanifold $\Gamma\backslash \R^3$:

$$(d,D)=\left(\begin{pmatrix}
0\\
0\\
0
\end{pmatrix}, \begin{pmatrix}
0 & 1 &0\\
-1& 1 &0 \\
0&0 &7
\end{pmatrix}\right).$$

Note that every element of $F$ commutes with $D$, so every $\sim_f$-equivalence class consists of precisely one element. This also means that the class $[A^i]_k$ boosts to the class $[A^{ip}]_{pk}$. It is also quite easy to compute that $[A^p]_k$ is inessential if and only if $p\equiv k \mod 6$. This boosting scheme can be found in Figure \ref{fig2}.

\begin{figure}

\centering
\begin{tikzpicture}[line cap=round,line width = .5pt,line join=round, >=triangle 45, x=1.8cm,y=1.5cm]

%cirkels
\draw[color=black](0,2) circle (0.3);
\draw[color=black](-.3,.9) rectangle (.3,1.5);
\draw[color=black](-.3,.1) rectangle (.3,.7);
\draw[color=black](-.3,-.7) rectangle (.3,-.1);
\draw[color=black](-.3,-1.5) rectangle (.3,-.9);
\draw[color=black](-.3,-2.3) rectangle (.3,-1.7);

\draw[color=black](2.1,1.7) rectangle (2.7,2.3);
\draw[color=black](2.4,1.2) circle (0.3);
\draw[color=black](2.1,.1) rectangle (2.7,.7);
\draw[color=black](2.1,-.7) rectangle (2.7,-.1);
\draw[color=black](2.1,-1.5) rectangle (2.7,-.9);
\draw[color=black](2.1,-2.3) rectangle (2.7,-1.7);

\draw[color=black](4.5,1.7) rectangle (5.1,2.3);
\draw[color=black](4.5,.9) rectangle (5.1,1.5);
\draw[color=black](4.8,.4) circle (0.3);
\draw[color=black](4.5,-.7) rectangle (5.1,-.1);
\draw[color=black](4.5,-1.5) rectangle (5.1,-.9);
\draw[color=black](4.5,-2.3) rectangle (5.1,-1.7);
%tekst
\draw[color=black] (0,2) node {$[A]_1$};
\draw[color=black] (0,1.2) node {$[A^2]_1$};
\draw[color=black] (0,.4) node {$[A^3]_1$};
\draw[color=black] (0,-.4) node {$[A^4]_1$};
\draw[color=black] (0,-1.2) node {$[A^5]_1$};
\draw[color=black] (0,-2) node {$[\Id]_1$};

\draw[color=black] (2.4,2) node {$[A]_2$};
\draw[color=black] (2.4,1.2) node {$[A^2]_2$};
\draw[color=black] (2.4,.4) node {$[A^3]_2$};
\draw[color=black] (2.4,-.4) node {$[A^4]_2$};
\draw[color=black] (2.4,-1.2) node {$[A^5]_2$};
\draw[color=black] (2.4,-2) node {$[\Id]_2$};

\draw[color=black] (4.8,2) node {$[A]_3$};
\draw[color=black] (4.8,1.2) node {$[A^2]_3$};
\draw[color=black] (4.8,.4) node {$[A^3]_3$};
\draw[color=black] (4.8,-.4) node {$[A^4]_3$};
\draw[color=black] (4.8,-1.2) node {$[A^5]_3$};
\draw[color=black] (4.8,-2) node {$[\Id]_3$};

%lijnen
\draw[->,color=black] (.35,1.15) -- (2.05,-.3);
\draw[->,color=black] (.35,.35) -- (2.05,-1.9);
\draw[->,color=black] (.35,-.35) -- (2.05,1.2);
\draw[->,color=black] (.35,-1.1) -- (2.05,-.4);
\draw[->,color=black] (.35,-1.95) -- (2.05,-2);

\draw[->,color=black] (.35,1.2) .. controls (3,2) .. (4.45,-1.9);
\draw[->,color=black] (.35,.4) .. controls (3,1) .. (4.45,.5);
\draw[->,color=black] (.35,-.4) .. controls (2,-2.5) and (3,-1) .. (4.45,-2);
\draw[->,color=black] (.35,-1.2) .. controls (2,-.1) and (3.7,-1.8) .. (4.5,.2);
\draw[->,color=black] (.35,-2.05) .. controls (2.4,-2.5) .. (4.45,-2.05);

\end{tikzpicture}

\caption{A scheme of $\sim_{f^k}$-equivalence classes at the lowest levels for Example \ref{example Z6}} \label{fig2}
\end{figure}
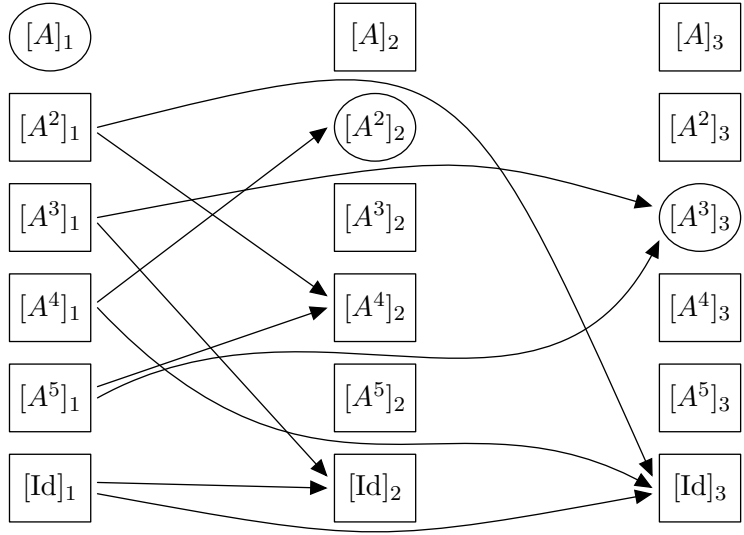

\medskip

The only classes at level $1$ that boost inessentially to level $2$ (to $[A^2]_2$), are the inessential class $[A]_1$ and the essential class $[A^4]_1$. It is therefore clear that $\# IIB_2(f)=N_{A^4}(f)$. By Theorem \ref{thmNAf} and Proposition \ref{propIIB}:$$NF_2(f)=N(f^2)+\frac{|\det(I-A^4D)|}{6}.$$
In a similar way, one can see that the only classes that boost to $[A^3]_3$ are the inessential class $[A]_1$ and the essential classes $[A^3]_1$ and $[A^5]_1$. Hence,$$NF_3(f)=N(f^3)+ \frac{|\det(I-A^3D)|}{6}+ \frac{|\det(I-A^5D)|}{6}.$$
Computing $NF_4(f)$ becomes a little more tricky, since fixed point classes at both level $1$ and $2$ can boost to inessential fixed point classes of level $4$. With an easy computation, we see that the classes that boost to $[A^4]_4$ are $[A]_1,[A^4]_1,[A^2]_2$ and $[A^5]_2$. Note that we already knew that $[A]_1$ is inessential and that the essential class $[A^4]_1$ boosts inessentially to $[A^2]_2$. Therefore, there are no essential classes at level $1$ that boosts essentially to level $2$ and inessentially to level $4$. This means that $$NF_4(f)=N(f^4)+ \frac{|\det(I-A^5D^2)|}{6}+\frac{|\det(I-A^4D)|}{6}.$$
As $[A^i]_k$ boosts to $[A^{ip}]_{pk}$, we know that this boosting relation is a bijection between the $\sim_{f^k}$-equivalence classes and the $\sim_{f^{pk}}$-equivalence classes if and only if $p$ is invertible modulo $6$. Now, suppose $n>0$ is an integer, such that $\gcd(n,6)=1$. Note that every divisor of $n$ will also be relatively prime to $6$. Because there is only one inessential class at each level and because there is a bijection between the classes at different levels and because maps on infra-nilmanifolds are essentially reducible, every essential class that boosts to level $n$ will do so in an essential way. Therefore, if $\gcd(n,6)=1$, $$NF_n(f)=N(f^n).$$Whenever $n$ has many prime factors $2$ and $3$, it will be much harder to compute $NF_n(f)$, because many inessential boosts occur and we have to keep track of all these boostings in order to not count some of them multiple times. As an example, let us compute $NF_6(f)$. Note that $[\Id]_6$ is the only inessential class at level $6$. The classes that boost to $[\Id]_6$ are $[\Id]_3, [A^3]_3, [\Id]_2, [A^2]_2,[A^4]_2$ and all classes at level $1$. The only essential classes at level $1$ that boost to essential classes at both level $2$ and level $3$, are $[\Id]_1$ and $[A^2]_1$. Also, there are no essential classes at level $1$ that boost to inessential classes at both level $2$ and level $3$. Hence,
\begin{equation}\nonumber
\begin{split}
NF_6(f)=N(f^6)+\frac{|\det(I-D^3)|}{6}+\frac{|\det(I-D^2)|}{6}+\frac{|\det(I-A^4D^2)|}{6}\\
-\frac{|\det(I-D)|}{6}-\frac{|\det(I-A^2D)|}{6}.
\end{split}
\end{equation}
Here, these last two terms are precisely the number of essential fixed point classes at level $1$ that boost essentially to level $2$ and level $3$. As they are counted double, we have to subtract them once.
\end{example}

As one can see from this last example, it can be very hard to compute $NF_n(f)$. The tools in this section are useful, but they still require a lot of manual labor. Looking at these examples, it is not unthinkable that there might not exist a general formula for $NF_n(f)$.

\section{Some properties of affine maps on infra-nilmanifolds}
In the last section of this paper, we will look specifically at affine maps on infra-nilmanifolds in order to derive a nice property of these maps (Theorem \ref{theorem uiteindelijk boost inessentieel}).

\begin{lemma}\label{lemma alternatief lemma bram 3.1}
If $A,B \in \GL_n(\C)$ and $D\in \C^{n\times n}$, such that $DA=BD$, then, for all $n>0$ it holds that $$\det(I-(AD)^n)=\det(I-(BD)^n).$$
\end{lemma}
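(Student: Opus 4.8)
The plan is to show that the two matrices $AD$ and $BD$ are conjugate, from which the equality of $\det(I-(AD)^n) = \det(I-(BD)^n)$ is immediate, since conjugate matrices have the same characteristic polynomial and hence their powers do as well.

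First I would observe that the hypothesis $DA = BD$ says that $D$ intertwines left multiplication by $A$ with left multiplication by $B$; the obstacle is that $D$ need not be invertible, so I cannot directly write $B = DAD^{-1}$. To get around this, note that $A, B \in \GL_n(\C)$ are invertible. From $DA = BD$ we get $D = B D A^{-1}$, and iterating, $D = B^k D A^{-k}$ for all $k \geq 0$. Now compute $(BD)^n = B D B D \cdots B D$ ($n$ factors). Using $DB = ?$ — actually the cleaner route is: from $DA = BD$ we obtain $B^{-1} D = D A^{-1}$, i.e. $D$ also intertwines $A^{-1}$ with $B^{-1}$. More usefully, $(AD)(AD)\cdots(AD) = A (DA) D (DA) \cdots$ — let me instead argue directly that $AD$ and $BD$ are conjugate by $A$ (or by $B$). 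Indeed, from $DA = BD$ we get $A(DA)A^{-1} = A B D A^{-1}$... this still involves $A^{-1}$ on the wrong side. The correct computation: $BD = DA$ gives $BD = DA$, hence $A^{-1}(BD)A = A^{-1} D A \cdot A = A^{-1}DA^2$, which is not obviously $DA = BD$ conjugated. Let me reconsider: multiply $DA = BD$ on the left by $A^{-1}$ and note nothing simplifies. Instead multiply $DA = BD$ on the right by nothing and on the left by $B^{-1}$: $B^{-1}DA = D$, so $B^{-1}(DA) = D$, giving $DA = BD$ again. The clean statement is that $A(DA) = (AD)A$ — trivially — and $A(DA) = A(BD)$, so $(AD)A = A(BD)$, i.e. $A^{-1}(AD)A = BD$. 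Hence $AD$ and $BD$ are conjugate via $A$.

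Therefore $\det\bigl(I - (AD)^n\bigr) = \det\bigl(A^{-1}(I-(AD)^n)A\bigr) = \det\bigl(I - (A^{-1}(AD)A)^n\bigr) = \det\bigl(I - (BD)^n\bigr)$, which is the claim. The only subtlety — the non-invertibility of $D$ — is entirely sidestepped because the conjugating matrix is $A$, which is assumed invertible. (This is of course just a sharpened, iterate-wise version of Lemma~\ref{Lemma Bram}, proved by the same conjugation trick rather than by building a sequence.)
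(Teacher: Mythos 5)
Your proof is correct and, after the false starts, arrives at exactly the argument the paper uses: conjugate by $A$ to obtain $A^{-1}(AD)A = DA = BD$, and then use invariance of $\det(I-\cdot)$ under conjugation. The paper's version of the same computation is just the one-line chain $\det(I-(AD)^n)=\det(A^{-1})\det(I-(AD)^n)\det(A)=\det(I-(DA)^n)=\det(I-(BD)^n)$.
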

\begin{proof}
Using the multiplicative properties of the determinant, we find the following equalities:$$\det(I-(AD)^n)=\det(A^{-1})\det(I-(AD)^n)\det(A)=\det(I-(DA)^n)=\det(I-(BD)^n).$$\end{proof}

Remember that a continuous map $f$ will be called \textbf{Wecken} if and only if $\#\Fix(f)=N(f)$.  

\begin{theorem}\label{theorem uiteindelijk boost inessentieel}
Let $\overline{(d,D)}$ be an affine map on an infra-nilmanifold. Suppose that there exists at least one $k$, for which $N(\overline{(d,D)}^k)\neq 0$. Then, $D_\ast$ is semi-hyperbolic if and only if $\overline{(d,D)^k}$ is a Wecken map for every $k$.   
\end{theorem}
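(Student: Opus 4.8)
The plan is to prove the two implications separately, using the formula from Theorem~\ref{LeeForm} together with Lemma~\ref{Lemma Bram} and Remark~\ref{remark essential of inessential}. Throughout, write $F$ for the holonomy group and recall that, by Remark~\ref{remark essential of inessential}, a fixed point class $p(\Fix((a,A)\circ\widetilde{(d,D)^k}))$ is essential precisely when $\det(I-A_\ast D_\ast^k)\neq 0$; by Proposition~\ref{propaff} an affine map is Wecken exactly when \emph{every} non-empty fixed point class is essential, equivalently when there is no $A\in F$ and no relevant level $k$ with an empty inessential class, equivalently (since for affine maps inessential classes on infra-nilmanifolds are always realized or irrelevant in the index count) when for every $A\in F$ one has $\det(I-A_\ast D_\ast^k)\neq 0$ whenever that class is non-empty. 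So the statement amounts to: $D_\ast$ is semi-hyperbolic $\iff$ for every $k$ and every $A\in F$ occurring, $\det(I-A_\ast D_\ast^k)\neq 0$.

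For the direction ($\Leftarrow$), I would argue by contraposition. Suppose $D_\ast$ is \emph{not} semi-hyperbolic, so $D_\ast$ has an eigenvalue $\lambda$ that is a root of unity, say $\lambda^m=1$. Then $D_\ast^m$ has eigenvalue $1$, so $\det(I-\operatorname{Id}\cdot D_\ast^m)=0$, i.e.\ the trivial holonomy element $\operatorname{Id}\in F$ gives $\det(I-\operatorname{Id}_\ast D_\ast^m)=0$. By Theorem~\ref{thmRInf} this means $R(\overline{(d,D)}^m)=\infty$, and by Theorem~\ref{thmN=R} combined with Proposition~\ref{propaff}, the map $\overline{(d,D)}^m$ then has an inessential (hence, being affine, empty or infinite) fixed point class, so $\#\Fix(\overline{(d,D)}^m)\neq N(\overline{(d,D)}^m)$ unless $N=0$; but the hypothesis guarantees $N(\overline{(d,D)}^k)\neq 0$ for some $k$, and one checks this forces $\#\Fix\neq N$ at level $\operatorname{lcm}(k,m)$ or a suitable level where both a non-trivial essential class and the bad inessential class coexist. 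Thus $\overline{(d,D)^k}$ fails to be Wecken at that level. (The slightly delicate point is bookkeeping: one needs the existence of \emph{some} nonzero Nielsen number to promote ``has an empty inessential class'' into ``is not Wecken'' — this is exactly where the hypothesis ``$N(\overline{(d,D)}^k)\neq 0$ for at least one $k$'' is used.)

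For the direction ($\Rightarrow$), assume $D_\ast$ is semi-hyperbolic; I must show $\overline{(d,D)^k}$ is Wecken for every $k$. By Remark~\ref{remark essential of inessential} and Proposition~\ref{propaff} it suffices to show that for every $k$ and every $A\in F$ that actually occurs as the holonomy part of an element of $\Gamma$ in the relevant sense, $\det(I-A_\ast D_\ast^k)\neq 0$. Fix such an $A$ and $k$. Applying Lemma~\ref{Lemma Bram} to the finite group $F$ (viewed in $\GL(\lie)$ via differentials) and the matrix $D_\ast^k$ — noting the hypothesis of that lemma, that $D_\ast^k A'_\ast = B'_\ast D_\ast^k$ for a suitable $B'\in F$, holds because $(d,D)$ is an affine homotopy lift and so $D$ normalizes $F$ up to the semidirect structure — we obtain $B\in F$ and an integer $l$ with $(B_\ast D_\ast^k)^l = D_\ast^{kl}$ and $\det(I-A_\ast D_\ast^k)=\det(I-B_\ast D_\ast^k)$. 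If $\det(I-B_\ast D_\ast^k)=0$ then $B_\ast D_\ast^k$ has eigenvalue $1$, hence $(B_\ast D_\ast^k)^l = D_\ast^{kl}$ has eigenvalue $1$, so $D_\ast$ has an eigenvalue that is an $(kl)$-th root of unity, contradicting semi-hyperbolicity. Therefore $\det(I-A_\ast D_\ast^k)\neq 0$ for all $A$ and $k$, every fixed point class at every level is essential, and by Proposition~\ref{propaff} each $\overline{(d,D)^k}$ is Wecken. The main obstacle is the careful use of the finiteness hypothesis in the ($\Leftarrow$) direction and making precise which holonomy elements $A$ genuinely occur; everything else is a direct application of Lemma~\ref{Lemma Bram} and the index dichotomy of Proposition~\ref{propaff}.
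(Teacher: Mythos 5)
Your $(\Rightarrow)$ direction (semi-hyperbolic implies Wecken at every level) is correct and matches the paper's argument: apply Lemma~\ref{Lemma Bram} to find $B$ and $l$ with $(B_\ast D_\ast^k)^l=D_\ast^{kl}$ and $\det(I-A_\ast D_\ast^k)=\det(I-B_\ast D_\ast^k)$, then observe a zero determinant would force a root-of-unity eigenvalue of $D_\ast$; combined with Proposition~\ref{propaff}, every level is Wecken.

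The $(\Leftarrow)$ direction (by contraposition) has a genuine gap, and you flag it yourself but do not close it. You observe that $\det(I-\Id_\ast D_\ast^m)=0$, so level $m$ has an inessential fixed point class. By Proposition~\ref{propaff} such a class is either empty or infinite, and if it is empty it contributes nothing to $\#\Fix$, so at this point you cannot conclude that $\overline{(d,D)}^m$ fails to be Wecken. Your proposed fix --- passing to level $\lcm(k,m)$ where ``a non-trivial essential class and the bad inessential class coexist'' --- does not work as stated: coexistence of an essential class and a (possibly empty) inessential class at the same level is entirely compatible with the map being Wecken there. What is actually needed, and what the paper's proof supplies, is the boosting mechanism: start from an essential class $[(a,A)]_k$ (which by Proposition~\ref{propaff} is a single point, in particular non-empty), and show that for a suitable $l$ its boost $\gamma_{k,kl}([(a,A)]_k)=[(b,B)]_{kl}$ is inessential. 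Since $[(a,A)]_k\subset[(b,B)]_{kl}$ as subsets of the manifold, the inessential class $[(b,B)]_{kl}$ is non-empty, hence infinite, hence $\overline{(d,D)}^{kl}$ is not Wecken. The technical step you are missing is the computation that makes the boosted class inessential: one combines Lemma~\ref{lemma alternatief lemma bram 3.1} with Lemma~\ref{Lemma Bram} to produce $C\in F$ and $p>0$ with $(C_\ast D_\ast^k)^p=D_\ast^{kp}$ and $\det(I-(A_\ast D_\ast^k)^l)=\det(I-(C_\ast D_\ast^k)^l)$, and then chooses $l=\lcm(p,d)$ (where $\lambda^d=1$ for the root-of-unity eigenvalue $\lambda$) so that $\det(I-(C_\ast D_\ast^k)^l)=\det(I-D_\ast^{kl})=0$. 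Without this, your argument shows only that an inessential Reidemeister class exists, not that a non-empty inessential fixed point class exists.
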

\begin{proof}
First, suppose that $D_\ast$ is semi-hyperbolic. Just like in the proof of Theorem \ref{thmNF=Nf)}, we know that every fixed point class at every level is essential. By Proposition \ref{propaff}, it follows that $\overline{(d,D)^k}$ is a Wecken map.

\medskip

On the other hand, suppose that $D_\ast$ is not semi-hyperbolic. This means that there exists an eigenvalue $\lambda$ of $D_\ast$, such that $\lambda^d=1$. Now we will show that every essential fixed point class will eventually be boosted to an inessential fixed point class. Pick an essential fixed point class $[(a,A)]_k$. This is possible, because not all Nielsen numbers are $0$. Let $l$ be an arbitrary positive integer and set $m=kl$. Consider the fixed point class $\gamma_{km}([(a,A)])$. This coincides with the set $p(\Fix(((a,A)\circ(d,D)^k)^l))$. Now suppose that $$(b,B)\circ (d,D)^{m}=((a,A)\circ(d,D)^k)^l.$$This means that $BD^{m}=(AD^k)^l$ and by Remark~\ref{remark essential of inessential}, we now know that $\gamma_{km}([(a,A)])$ is inessential if and only if $$\det(I-(A_\ast D_\ast^k)^l)=\det(I-B_\ast D_\ast^{m})=0.$$By combining Lemma~\ref{lemma alternatief lemma bram 3.1} and Lemma \ref{Lemma Bram}, we see there exists a $C\in F$ and a positive integer $p$, such that $$\det(I-(A_\ast D_\ast^k)^l)=\det(I-(C_\ast D_\ast^k)^l)\textrm{ and } (C_\ast D_\ast^k)^p=D_\ast^{kp}.$$By taking $l=\lcm(p,d)$, we know that $(C_\ast D_\ast^k)^l=D_\ast^{kl}$. Also, $1$ is an eigenvalue of $D_\ast^{kl}$. By combining all of the above, we see that $$\det(I-B_\ast D_\ast^{m})=\det(I-(A_\ast D_\ast^k)^l)=\det(I-(C_\ast D_\ast^k)^l)=\det(I-( D_\ast^k)^l)=0.$$As there is certainly one essential fixed point class $[(a,A)]_k$, we know that it will boost to an inessential fixed point class $[(b,B)]_m$. This actually means that $[(a,A)]_k\subset [(b,B)]_m$, which implies that the inessential fixed point class $[(b,B)]_m$ is non-empty, which implies that $\overline{(d,D)}^m$ is not a Wecken map.
\end{proof}

\begin{corollary}\label{corWecken}
Suppose that there exists at least one $k$, such that $N(\overline{(d,D)}^k)\neq 0$. Whenever $\Fix(\overline{(d,D)}^k)$ is finite for every $k$, $\overline{(d,D)}$ will be Wecken at every level and $D_\ast$ is semi-hyperbolic.
\end{corollary}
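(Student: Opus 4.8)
The plan is to deduce this immediately from Theorem \ref{theorem uiteindelijk boost inessentieel} by verifying that its hypotheses are met. We are given that $N(\overline{(d,D)}^k) \neq 0$ for at least one $k$, which is exactly the standing hypothesis of that theorem, so we only need to check the finiteness condition and then feed it into the equivalence. First I would argue by contraposition against the ``not semi-hyperbolic'' branch of Theorem \ref{theorem uiteindelijk boost inessentieel}: suppose for contradiction that $D_\ast$ is \emph{not} semi-hyperbolic. Then the theorem produces some level $m$ at which $\overline{(d,D)}^m$ is not Wecken; tracing the argument, this failure comes from a non-empty inessential fixed point class $[(b,B)]_m$, and by Proposition \ref{propaff} every non-empty inessential fixed point class of an affine map consists of infinitely many points. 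Hence $\Fix(\overline{(d,D)}^m)$ would be infinite, contradicting the assumption that $\Fix(\overline{(d,D)}^k)$ is finite for every $k$.

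Therefore $D_\ast$ must be semi-hyperbolic. Then the forward direction of Theorem \ref{theorem uiteindelijk boost inessentieel} (which in turn rests on the argument in the proof of Theorem \ref{thmNF=Nf)}, showing every fixed point class at every level is essential, together with Proposition \ref{propaff}) gives that $\overline{(d,D)}^k$ is Wecken for every $k$. This yields both conclusions of the corollary simultaneously: $D_\ast$ is semi-hyperbolic, and $\overline{(d,D)}$ is Wecken at every level.

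The only point that requires a moment's care is the very first step --- making sure that the finiteness of $\Fix(\overline{(d,D)}^k)$ genuinely rules out the non-semi-hyperbolic case. The subtlety is that Theorem \ref{theorem uiteindelijk boost inessentieel} only asserts the existence of some level $m$ where Weckenness fails, and we must confirm that ``fails to be Wecken'' here really does mean ``has infinitely many fixed points'' rather than merely ``has more fixed points than $N$''; this is precisely where Proposition \ref{propaff}(2) is used, since for affine maps a non-empty non-essential fixed point class is automatically infinite, so $\#\Fix(\overline{(d,D)}^m) > N(\overline{(d,D)}^m)$ forces $\#\Fix(\overline{(d,D)}^m) = \infty$. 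I do not expect any genuine obstacle beyond correctly invoking these already-established results in the right order.
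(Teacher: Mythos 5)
Your proof is correct and uses the same essential ingredient as the paper's: Proposition~\ref{propaff}(2), which forces any non-empty inessential fixed point class of an affine map to be infinite, so finiteness of $\Fix(\overline{(d,D)}^k)$ at every level rules out such classes and hence rules out the ``not semi-hyperbolic'' branch of Theorem~\ref{theorem uiteindelijk boost inessentieel}. The paper states this more tersely in a direct form (finite $\Fix$ at every level $\Rightarrow$ Wecken at every level $\Rightarrow$ semi-hyperbolic), whereas you phrase it as a contradiction; the two are logically equivalent and both hinge on the same observation.
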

\begin{proof}
Due to Proposition~\ref{propaff}, we know that every non-empty inessential fixed point class contains infinitely many fixed points.
\end{proof}

\begin{corollary}\label{cor NF_n(f)>Nfn}
Suppose that $f$ is a continuous map on an infra-nilmanifold that is not semi-hyperbolic. Suppose that $N(f^k)\neq 0$ for at least one $k$. Then, at certain levels, there exist non-empty inessential fixed point classes. Also, there exist $n>0$, such that $$NF_n(f)>N(f^n).$$
\end{corollary}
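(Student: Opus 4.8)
The plan is to reduce to the affine case and then re-use the mechanism already built in the proof of Theorem~\ref{theorem uiteindelijk boost inessentieel}. Since $NF_n$, $N(f^n)$ and the entire boosting/reducing structure are homotopy invariant, I would first replace $f$ by a homotopic affine map $\overline{(d,D)}$; as the differential $D_\ast$ is determined up to conjugacy, ``semi-hyperbolic'' is a genuine property of $f$, and ``not semi-hyperbolic'' means $D_\ast$ has an eigenvalue $\lambda$ with $\lambda^j=1$ for some $j>0$. I would also record that Proposition~\ref{propIIB} is applicable here, because infra-nilmanifolds are essentially reducible to the GCD (Theorem~\ref{thmessredgcd}) and essentially toral (Theorem~\ref{thmesstor}).

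Next, since $N(f^k)\neq 0$ for at least one $k$, there is an essential fixed point class $[(a,A)]_k$. I would then run the computation in the second part of the proof of Theorem~\ref{theorem uiteindelijk boost inessentieel}: combine Lemma~\ref{lemma alternatief lemma bram 3.1} and Lemma~\ref{Lemma Bram} to obtain $C\in F$ and $p>0$ with $(C_\ast D_\ast^k)^p=D_\ast^{kp}$, put $l=\lcm(p,j)$ and $m=kl$, and conclude that $\det(I-B_\ast D_\ast^m)=0$, where $(b,B)$ is an affine homotopy lift representing $\gamma_{mk}([(a,A)]_k)$. By Remark~\ref{remark essential of inessential} this says $[(b,B)]_m$ is inessential, while $[(a,A)]_k\subseteq [(b,B)]_m$ forces $[(b,B)]_m$ to be non-empty (an essential class is non-empty). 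This already settles the first assertion: at level $m$ there is a non-empty inessential fixed point class.

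For the inequality I would then upgrade ``some essential class boosts inessentially'' to ``some \emph{irreducible} essential class boosts inessentially''. By Lemma~\ref{lemessredgcd}, the essential class $[(a,A)]_k$ is preceded by a unique irreducible essential fixed point class $[\beta]_d$ with $d=d([(a,A)]_k)\mid k$, so $\gamma_{kd}([\beta]_d)=[(a,A)]_k$ and hence $\gamma_{md}([\beta]_d)=\gamma_{mk}\gamma_{kd}([\beta]_d)=[(b,B)]_m$, which is inessential. Thus $[\beta]_d$ is an irreducible essential fixed point class that is inessentially boosted to level $m$, i.e. $[\beta]_d\in IIB_m(f)$, so $\#IIB_m(f)\geq 1$. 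Proposition~\ref{propIIB} then gives $NF_m(f)=N(f^m)+\#IIB_m(f)>N(f^m)$, so $n=m$ works.

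The genuinely new content beyond Theorem~\ref{theorem uiteindelijk boost inessentieel} is small: the only step needing care is this passage to an irreducible predecessor, which is what makes the class land in $IIB_m(f)$ so that Proposition~\ref{propIIB} can be applied; this is precisely where essential reducibility to the GCD is used. One should also check, harmlessly, that $m$ is a proper multiple of $k$ (otherwise $\gamma_{mk}$ is the identity and cannot turn an essential class into an inessential one), which is automatic once an inessential boost exists at all.
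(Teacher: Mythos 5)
Your proof is correct and follows the same route as the paper: extract from the proof of Theorem~\ref{theorem uiteindelijk boost inessentieel} an essential class $[(a,A)]_k$ that boosts inessentially to some level $m$, observe the inessential target is non-empty because it contains the essential source, and then invoke Proposition~\ref{propIIB}. The one step you spell out that the paper leaves implicit is passing via Lemma~\ref{lemessredgcd} from $[(a,A)]_k$ to its unique irreducible essential predecessor $[\beta]_d$, which (since $\gamma_{md}=\gamma_{mk}\gamma_{kd}$) is what actually places an element in $IIB_m(f)$ and makes Proposition~\ref{propIIB} give the strict inequality; that is a necessary detail, correctly supplied.
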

\begin{proof}
By examining the proof of Theorem~\ref{theorem uiteindelijk boost inessentieel}, we see that there will exist an essential fixed point class which boosts to an inessential fixed point class. Therefore, this inessential fixed point class will be non-empty. On the other hand, due to Proposition~\ref{propIIB}, the second statement follows.
\end{proof}

\begin{corollary}\label{cor NF=Nf}
Suppose that $f$ is a continuous map on an infra-nilmanifold. Then, $NF_n(f)=N(f^n)$, for all $n$ if and only if $f$ is a semi-hyperbolic map or $N(f^n)=0$, for all $n$.
\end{corollary}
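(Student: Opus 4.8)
The plan is to deduce this from the results already assembled, treating the two implications separately. For the ``if'' direction, suppose first that $f$ is semi-hyperbolic; then Theorem \ref{thmNF=Nf)} gives $NF_n(f)=N(f^n)$ for all $n$, and we are done. If instead $N(f^n)=0$ for all $n$, then every fixed point class at every level is inessential, so there are no essential irreducible classes anywhere in $\bigcup_{k|n}\mathcal{R}(f^k)$; since infra-nilmanifolds are essentially toral (Theorem \ref{thmesstor}), Corollary \ref{coresstor} yields $NF_n(f)=0=N(f^n)$. So the ``if'' direction is immediate.

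For the ``only if'' direction, the natural route is the contrapositive: assume $f$ is \emph{not} semi-hyperbolic and that $N(f^n)\neq 0$ for at least one $n$, and produce some $n$ with $NF_n(f)>N(f^n)$. This is almost exactly the statement of Corollary \ref{cor NF_n(f)>Nfn}, so the proof should essentially just invoke it. The only gap to bridge is that Corollary \ref{cor NF_n(f)>Nfn} and the underlying Theorem \ref{theorem uiteindelijk boost inessentieel} are phrased for affine maps $\overline{(d,D)}$, whereas here $f$ is an arbitrary continuous map; but by the homotopy-invariance of all the quantities involved (the Nielsen numbers $N(f^n)$, the numbers $NF_n(f)$, and the property of being semi-hyperbolic, which is read off from the differential $D_\ast$ of an affine homotopy lift), we may replace $f$ by its affine counterpart $\overline{(d,D)}$ without changing anything. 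Then Corollary \ref{cor NF_n(f)>Nfn} applies verbatim.

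The one subtlety worth spelling out is the logical bookkeeping of the dichotomy. The claim ``$NF_n(f)=N(f^n)$ for all $n$ $\iff$ ($f$ semi-hyperbolic) or ($N(f^n)=0$ for all $n$)'' should be organized so that the three mutually relevant cases are covered: (i) $f$ semi-hyperbolic; (ii) $N(f^n)=0$ for all $n$; (iii) $f$ not semi-hyperbolic and $N(f^k)\neq 0$ for some $k$. Cases (i) and (ii) give the equality (the ``if'' direction), and case (iii) — which is precisely the negation of the right-hand side — gives the strict inequality for some $n$ via Corollary \ref{cor NF_n(f)>Nfn}, establishing the ``only if'' direction in contrapositive form. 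I do not anticipate a genuine obstacle here: the corollary is a clean repackaging of Theorem \ref{thmNF=Nf)} and Corollary \ref{cor NF_n(f)>Nfn}, and the main thing to get right is simply making the case split exhaustive and citing the homotopy-invariance that lets us pass to an affine representative.
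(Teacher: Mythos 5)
Your proof is correct and follows essentially the same route as the paper: both directions hinge on Theorem \ref{thmNF=Nf)} and Corollary \ref{cor NF_n(f)>Nfn}, plus a short argument that $NF_n(f)=0$ when all $N(f^n)=0$. Two minor observations: the paper handles that last case via Proposition \ref{propIIB} (giving $NF_n(f)=N(f^n)+\#IIB_n(f)=0$) rather than Corollary \ref{coresstor}, though both work; and your worry about passing to an affine representative for the contrapositive is unnecessary, since Corollary \ref{cor NF_n(f)>Nfn} is already stated for arbitrary continuous maps (it is Theorem \ref{theorem uiteindelijk boost inessentieel}, not the corollary, that is restricted to affine maps).
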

\begin{proof}
When dealing with semi-hyperbolic maps, the statement follows from Theorem \ref{thmNF=Nf)} and Corollary \ref{cor NF_n(f)>Nfn}. Hence, the only thing left to prove is that $NF_n(f)=0$ if $N(f^n)=0$ for all $n$. This actually follows from Proposition \ref{propIIB}. As all fixed point classes at all levels are inessential, we know that $\#IIB_n(f)=0$. As we already knew that $N(f^n)=0$, it follows by Proposition \ref{propIIB} that $NF_n(f)=0$.
\end{proof}

Again, we can translate some of these results into comparable results concerning dynamical zeta functions.

\begin{corollary}
Suppose that $f$ is continuous map on an infra-nilmanifold. Then, $NF_f(z)=N_f(z)$ if and only if $f$ is a semi-hyperbolic map or $N(f^n)=0$, for all $n$.
\end{corollary}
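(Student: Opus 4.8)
The statement to prove is the corollary characterizing when $NF_f(z)=N_f(z)$, so the strategy is to reduce it directly to Corollary~\ref{cor NF=Nf}, which already settles the pointwise equality $NF_n(f)=N(f^n)$ for all $n$. First I would recall the definitions of the two generating functions: by definition $NF_f(z)=\exp\left(\sum_{k=1}^\infty \frac{NF_k(f)z^k}{k}\right)$, and $N_f(z)=\exp\left(\sum_{k=1}^\infty \frac{N(f^k)z^k}{k}\right)$ is the Nielsen zeta function. Since the exponential of a formal (or convergent) power series is injective — two such series with zero constant term have equal exponentials if and only if the series themselves are equal — the equality $NF_f(z)=N_f(z)$ holds if and only if $\sum_{k=1}^\infty \frac{NF_k(f)z^k}{k}=\sum_{k=1}^\infty \frac{N(f^k)z^k}{k}$, which, comparing coefficients of $z^k$, is equivalent to $NF_k(f)=N(f^k)$ for every $k$.

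Having made this reduction, the corollary becomes a verbatim restatement of Corollary~\ref{cor NF=Nf}: $NF_n(f)=N(f^n)$ for all $n$ if and only if $f$ is semi-hyperbolic or $N(f^n)=0$ for all $n$. So the proof is essentially two lines: invoke the injectivity of $\exp$ on power series without constant term to pass between the zeta-function identity and the family of numerical identities, then quote Corollary~\ref{cor NF=Nf}.

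The only point that needs a word of care — and the closest thing to an obstacle here — is the legitimacy of ``taking logarithms'': one should note either that both sides are formal power series in $z$ with constant term $1$, so $\log$ is defined formally and $\exp$ is a bijection on the ideal $z\Q[[z]]$, or (if one prefers the analytic viewpoint) that both Nielsen and minimal dynamical zeta functions have positive radius of convergence, so the identity of holomorphic functions near $0$ forces equality of all Taylor coefficients. Either way the deduction $NF_f(z)=N_f(z)\iff \forall k:\,NF_k(f)=N(f^k)$ is immediate, and no new estimate or construction is required. I would write the proof in the ``if and only if'' direction by first noting the coefficientwise equivalence and then citing Corollary~\ref{cor NF=Nf}; the whole argument fits comfortably in a short paragraph.
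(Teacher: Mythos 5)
Your proposal is correct and takes exactly the same approach as the paper's proof, which simply cites Corollary~\ref{cor NF=Nf} and leaves the coefficientwise reduction implicit; you have just spelled out the standard injectivity-of-$\exp$ observation that the paper takes for granted.
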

\begin{proof}
This follows immediately from Corollary \ref{cor NF=Nf}.
\end{proof}

We can actually say something more about another zeta function. In \cite{fels88-1}, the following dynamical zeta function was defined: $$M_g(z)=\exp\left(\sum_{k=1}^\infty \frac{\#\Fix(g^k)z^k}{k}\right).$$When working with affine semi-hyperbolic maps on infra-nilmanifolds, we actually know how this zeta function looks, due to the following result.

\begin{corollary}
Suppose that $g$ is an affine semi-hyperbolic map on an infra-nilmanifold, then $$M_g(z)=NF_g(z).$$ 
\end{corollary}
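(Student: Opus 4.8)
The plan is to show that for an affine semi-hyperbolic map $g$ on an infra-nilmanifold, the two power series defining $M_g(z)$ and $NF_g(z)$ have identical coefficients in every degree, so the zeta functions coincide. Recall that $M_g(z)=\exp\left(\sum_{k=1}^\infty \frac{\#\Fix(g^k)z^k}{k}\right)$ and $NF_g(z)=\exp\left(\sum_{k=1}^\infty \frac{NF_k(g)z^k}{k}\right)$, so it suffices to prove $\#\Fix(g^k)=NF_k(g)$ for every $k>0$.

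First I would observe that $g$ is semi-hyperbolic, so by Theorem~\ref{thmNF=Nf)} we have $NF_k(g)=N(g^k)$ for all $k$. The task therefore reduces to proving $\#\Fix(g^k)=N(g^k)$ for all $k$, i.e.\ that $g^k$ is Wecken at every level. But this is exactly the content of Theorem~\ref{theorem uiteindelijk boost inessentieel}: since $g$ is affine and $D_\ast$ is semi-hyperbolic, that theorem gives that $\overline{(d,D)^k}$ is a Wecken map for every $k$, hence $\#\Fix(g^k)=N(g^k)$. There is one degenerate case to dispose of separately, namely when $N(g^k)=0$ for all $k$: in that case Theorem~\ref{theorem uiteindelijk boost inessentieel} does not directly apply (its hypothesis requires some nonzero Nielsen number), but then by Corollary~\ref{cor NF=Nf} we have $NF_k(g)=0=N(g^k)$ for all $k$, and moreover a semi-hyperbolic map has all fixed point classes essential at every level (the Proposition following Theorem~\ref{thmNF=Nf)}), so all essential classes being simultaneously empty forces $\#\Fix(g^k)=0$ as well; hence $\#\Fix(g^k)=NF_k(g)$ still holds.

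Combining the two cases, $\#\Fix(g^k)=NF_k(g)$ for every $k$, so the defining series of $M_g(z)$ and $NF_g(z)$ agree termwise, whence $M_g(z)=NF_g(z)$.

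The main obstacle here is essentially bookkeeping rather than a genuine difficulty: one must be careful that Theorem~\ref{theorem uiteindelijk boost inessentieel} is stated with the standing hypothesis ``there exists at least one $k$ with $N(\overline{(d,D)}^k)\neq 0$'', so the all-zero case genuinely needs the separate argument via Corollary~\ref{cor NF=Nf} and the weakly-Jiang property of semi-hyperbolic maps. Everything else is a direct chain of citations; no new computation is required.
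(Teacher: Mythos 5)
Your proof is correct and follows exactly the paper's chain of citations: Theorem~\ref{theorem uiteindelijk boost inessentieel} gives that $g$ is Wecken at every level, so $\#\Fix(g^k)=N(g^k)$, and Theorem~\ref{thmNF=Nf)} gives $N(g^k)=NF_k(g)$, whence the coefficient series of $M_g$ and $NF_g$ agree. One small remark: the degenerate case you guard against, namely $N(g^k)=0$ for all $k$, is actually vacuous for a semi-hyperbolic map. In the proof of Theorem~\ref{thmNF=Nf)} it is shown that semi-hyperbolicity forces $\det(I-A_\ast D_\ast^n)\neq 0$ for every $A$ in the holonomy group and every $n$, so by Theorem~\ref{LeeForm} the Nielsen numbers $N(g^n)$ are all strictly positive and the standing hypothesis of Theorem~\ref{theorem uiteindelijk boost inessentieel} holds automatically. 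The paper's one-line proof relies implicitly on this observation; your additional care is harmless and shows you noticed the hypothesis, but the branch you handle separately can never occur.
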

\begin{proof}
Every such a map is a Wecken map on every level, due to Theorem \ref{theorem uiteindelijk boost inessentieel}. From this it follows, for all $k>0$, that $$\#\Fix(g^k)=N(g^k)=NF_k(g).$$ 
\end{proof}

This result partially answers a question asked in \cite{jezi03-01} in the case of infra-nilmanifolds. Given a map $f$ on a manifold, the author of this paper asked if it would be possible to find a map $g$ homotopic to $f$, such that $$\#\Fix(g^k)=NF_k(g)=NF_k(f),$$for all $k$. This question is equivalent to asking whether there exists a map $g$, homotopic to $f$, such that $M_g(z)$ and $NF_g(z)$ coincide.

\section*{Acknowledgements}

I would like to thank my advisor Karel Dekimpe for all useful comments on the first versions of this paper.

\medskip

This work was supported by the research fund KU Leuven.

\bibliography{G:/algebra/ref}

\begin{thebibliography}{10}

\bibitem{anos85-1}
Anosov, D.
\newblock {\em The {N}ielsen numbers of maps of nil-manifolds}.
\newblock Uspekhi. Mat. Nauk, 1985, 40 4(224), pp. 133--134.
\newblock English transl.: Russian Math. Surveys, 40 (no. 4), 1985, pp.
  149--150.

\bibitem{ddm05-1}
Dekimpe, K., De~Rock, B., and Malfait, W.
\newblock {\em The {A}nosov relation for {N}ielsen numbers of maps of
  infra-nilmanifolds}.
\newblock Monatschefte f\"ur Mathematik, 2007, 150, pp. 1--10.

\bibitem{dd13-2}
Dekimpe, K. and Dugardein, G.-J.
\newblock {\em Nielsen zeta functions for maps on infra-nilmanifolds are
  rational}.
\newblock To appear in Journal of Fixed Point Theory and Applications, 2014.

\bibitem{dp11-1}
Dekimpe, K. and Penninckx, P.
\newblock {\em The finiteness of the {R}eidemeister number of morphisms between
  almost-crystallographic groups}.
\newblock J. Fixed Point Theory Appl., 2011, 9 2, 257--283.

\bibitem{duga14-1}
Dugardein, G.-J.
\newblock {\em Nielsen zeta functions on infra-nilmanifolds up to dimension
  three}.
\newblock Topology Appl., 2015, 181 34--61.

\bibitem{fh86-1}
Fadell, E. and Husseini, S.
\newblock On a theorem of {A}nosov on {N}ielsen numbers for nilmanifolds.
\newblock In {\em Nonlinear functional analysis and its applications
  ({M}aratea, 1985)}, volume 173 of {\em NATO Adv. Sci. Inst. Ser. C Math.
  Phys. Sci.}, pages 47--53. Reidel, Dordrecht, 1986.

\bibitem{fels00-2}
Fel'shtyn, A.
\newblock {\em Dynamical zeta functions, {N}ielsen theory and {R}eidemeister
  torsion}.
\newblock Mem. Amer. Math. Soc., 2000, 147 699, xii+146.

\bibitem{fl13-1}
Fel'shtyn, A. and Lee, J.~B.
\newblock {\em The {N}ielsen and {R}eidemeister numbers of maps on
  infra-solvmanifolds of type {$(R)$}}.
\newblock Topology Appl., 2015, 181 62--103.

\bibitem{fels88-1}
Fel'shtyn, A.~L.
\newblock New zeta functions for dynamical systems and {N}ielsen fixed point
  theory.
\newblock In {\em Topology and geometry---{R}ohlin {S}eminar}, volume 1346 of
  {\em Lecture Notes in Math.}, pages 33--55. Springer, Berlin, 1988.

\bibitem{hk97-1}
Heath, P. and Keppelmann, E.
\newblock {\em Fibre techniques in Nielsen periodic point theory on nil and
  solvmanifolds I}.
\newblock Topology and its Applications, 1997, 76 pp. 217--247.

\bibitem{heat99-1}
Heath, P.~R.
\newblock {\em A survey of {N}ielsen periodic point theory (fixed {$n$})}.
\newblock Nielsen theory and {R}eidemeister torsion ({W}arsaw, 1996), 1999, 49
  159--188.

\bibitem{jezi03-01}
Jezierski, J.
\newblock {\em Wecken's theorem for periodic points in dimension at least 3}.
\newblock Topology Appl., 2006, 153 11, 1825--1837.

\bibitem{jm06-1}
Jezierski, J. and Marzantowicz, W.
\newblock {\em Homotopy Methods in Topological Fixed and Periodic Point
  Theory}, volume~3 of {\em Topological Fixed Point Theory and Its
  Applications}.
\newblock Springer, 2006.

\bibitem{jian83-1}
Jiang, B.
\newblock {\em Nielsen Fixed Point Theory}, volume~14 of {\em Contemp. Math.}
\newblock American Mathematical Society, 1983.

\bibitem{kll05-1}
Kim, S.~W., Lee, J.~B., and Lee, K.~B.
\newblock {\em Averaging formula for {N}ielsen numbers}.
\newblock Nagoya Math. J., 2005, 178, pp. 37--53.

\bibitem{ll06-1}
Lee, J.~B. and Lee, K.~B.
\newblock {\em Lefschetz numbers for continuous maps and periods for expanding
  maps on infra-nilmanifolds}.
\newblock J. Geom. Phys., 2006, 56 10, pp. 2011--2023.

\bibitem{ll09-1}
Lee, J.~B. and Lee, K.~B.
\newblock {\em Averaging formula for {N}ielsen numbers of maps on
  infra-solvmanifolds of type ({R})}.
\newblock Nagoya Math. J., 2009, 196 117--134.

\bibitem{lz07-1}
Lee, J.~B. and Zhao, X.
\newblock {\em Homotopy minimal periods for expanding maps on
  infra-nilmanifolds}.
\newblock J. Mathem. Soc. of Japan, 2007, 59 1, pp. 179--184.

\bibitem{lee95-2}
Lee, K.~B.
\newblock {\em Maps on infra-nilmanifolds}.
\newblock Pacific J. Math., 1995, 168, 1, pp. 157--166.

\bibitem{fp85-1}
Pilyugina, V.~B. and Fel'shtyn, A.~L.
\newblock {\em The {N}ielsen zeta function}.
\newblock Funktsional. Anal. i Prilozhen., 1985, 19 4, 61--67, 96.

\end{thebibliography}
\bibliographystyle{G:/algebra/ref}

\end{document}